\newcommand{\interior}[1]{%
  {\kern0pt#1}^{\mathrm{o}}%
}
\newcommand\paul[1]{\textcolor{red}{PG: #1}}
\newcommand{\bbR}{\mathbb{R}}
\newcommand{\bbE}{\mathbb{E}}
\newcommand\ignore[1]{}
\newcommand{\defeq}{\vcentcolon=}
\crefname{hypothesis}{Hypothesis}{Hypotheses}
\title{New Penalized Stochastic Gradient Methods for Linearly Constrained Strongly Convex Optimization}
\author{Meng Li\thanks{Department of Industrial Engineering and Operations Research, University of California-Berkeley, Berkeley, CA 
  (\email{meng\_li@berkeley.edu}, \email{pgrigas@berkeley.edu}, \email{atamturk@berkeley.edu}).}
\and Paul Grigas\footnotemark[1]
\and Alper Atamtürk\footnotemark[1]}
\newcommand*{\addFileDependency}[1]{
  \typeout{(#1)}
  \@addtofilelist{#1}
  \IfFileExists{#1}{}{\typeout{No file #1.}}
}
\newcommand*{\myexternaldocument}[1]{%
    \externaldocument{#1}%
    \addFileDependency{#1.tex}%
    \addFileDependency{#1.aux}%
}
\newcommand{\calM}{\mathcal{M}}
\begin{document}

\maketitle

\begin{abstract}
For minimizing a strongly convex objective function subject to linear inequality constraints, we consider a penalty approach that allows one to utilize stochastic methods for problems with a large number of constraints and/or objective function terms. We provide upper bounds on the distance between the solutions to the original constrained problem and the penalty reformulations, guaranteeing the convergence of the proposed approach. We give a nested accelerated stochastic gradient method and propose a novel way for updating the smoothness parameter of the penalty function and the step-size. 
The proposed algorithm requires at most $\tilde O(1/\sqrt{\epsilon})$ expected stochastic gradient iterations to produce a solution within an expected distance of $\epsilon$ to the optimal solution of the original problem, which is the best complexity for this problem class to the best of our knowledge. We also show how to query an approximate dual solution after stochastically solving the penalty reformulations, leading to results on the convergence of the duality gap. Moreover, the nested structure of the algorithm and upper bounds on the distance to the optimal solutions allows one to safely eliminate constraints that are inactive at an optimal solution throughout the algorithm, which leads to improved complexity results. Finally, we present computational results that demonstrate the effectiveness and robustness of our algorithm. 
\end{abstract}

\begin{keywords}
Convex optimization, linear constraints, penalty method, stochastic gradient, duality gap 
\end{keywords}

\begin{AMS}
90C30, 90C25, 65K05
\end{AMS}

\section{Introduction}
Consider the convex optimization problem

\begin{equation}\label{Start}
\begin{aligned}
\min_{x\in\mathbb{R}^n} \quad & F(x)=\frac1{\ell}\sum_{i=1}^{\ell}f_i(x)+\psi(x)\\
\mathrm{s.t.} \quad & x \in X_i, \ i=1,\ldots,{m},\\
\end{aligned}
\end{equation}
where \(f_i : \mathbb{R}^n\to\mathbb{R}\), \(i=1,...,{\ell},\) are convex and smooth functions, 
\(\psi:\mathbb{R}^n\to\mathbb{R}\) is a convex function for which one can build a proximal mapping, and $X_i \subseteq \bbR^n$ is a nonempty closed convex set for $i = 1, \ldots, {m}$. Problems of the form \eqref{Start} arise in many contexts, including predictive control \cite{borrelli2017predictive}, portfolio optimization \cite{markowitz2000mean}, and others. In machine learning, some examples include isotonic regression \cite{barlow1972isotonic}, convex regression \cite{seijo2011nonparametric, lim2012consistency}, and strong convex relaxations of sparse combinatorial problems such as signal estimation \cite{atamturk2018sparse} and sparse regression \cite{AG:Mmatrix,atamturk2019rank,han20202x2}. We are particularly interested in problems where the number of objective terms $\ell$ and/or the number of constraints $m$ are very large.

In this paper, we consider the case where the objective function \(F\) is \(\mu\)-strongly convex for $\mu > 0$, and the feasible region is defined by a set of linear inequalities, i.e., \(X_i := \{x \in \bbR^n : a_i^Tx\le b_i\}\), \(i=1,...,{m}\). Let \(A=(a_1,...,a_{m})^T\in\mathbb{R}^{{m}\times n}\) and \(b=(b_1,...,b_{m})^T\in\mathbb{R}^{m}\). Then, our problem of interest is
\begin{equation}\label{eq:Original Linear}
\begin{aligned}
\min_{x\in\mathbb{R}^n}\quad &F(x)=\frac1{\ell}\sum_{i=1}^{\ell}f_i(x)+\psi(x)\\
\mathrm{s.t.}\quad &a_i^Tx\le b_i, \  i=1,...,{m}.
\end{aligned}
\end{equation}

Our objective herein is to devise a penalty-based stochastic first-order approach for solving problem \eqref{eq:Original Linear}. The main advantage of the penalty function approach is that the resulting penalized reformulation is an unconstrained, smooth, convex optimization problem whose objective involves a finite sum over the components of objectives and the penalty functions. Such problems with a finite sum structure are particularly amenable to stochastic (proximal) gradient methods (see, e.g., \cite{bottou2018optimization} and the references therein) and variants that are able to exploit the finite sum structure to achieve faster convergence such as the stochastic variance reduced gradient method (SVRG) \cite{johnson2013accelerating}, the stochastic average gradient methods (SAG, SAGA) \cite{schmidt2017minimizing,defazio2014saga}, and related methods.

In this work, we construct a penalty reformulation of 
\eqref{eq:Original Linear} based on using the \emph{softplus} penalty function, which arises from a smoothing \cite{nesterov2005smooth} of the $\max(0, \cdot)$ function. We construct novel estimates of the 2-norm distance between solutions of the penalty reformulation and the original constrained problem by analyzing the smooth solution trajectories and results on \(C^{\infty}\) approximations of convex (and strongly convex) functions due to Azagra \cite{azagra2013global}. Then, based on these estimation results, we analyze the complexity of solving the penalty reformulations with various methods, most notably the accelerated stochastic methods that exploit the finite-sum structure. Moreover, using the structure of the proposed nested penalty method and our novel bounds, we are able to obtain approximate dual solutions and we can safely eliminate inactive constraints throughout the course of the algorithm.

There are several existing approaches for solving problems of the form \eqref{Start}. Classical approaches include interior point and projected gradient methods, as well as the augmented Lagrangian \cite{bertsekas2014constrained} and ADMM methods \cite{boyd2011distributed}. These approaches do not scale well when ${\ell}$ and ${m}$ are very large, either because they require projecting onto or otherwise directly working with the intersection of all of the constraint sets or because they require calculating the gradient of a function that is the sum of many components. One approach for addressing the case where ${m}$ is very large is the gradient descent method with projections onto randomly sampled constraint sets $X_i$ \cite{nedic2011random,wang2015incremental}.

Penalty methods have been considered in several previous works on similar problems. Nedich and Tatarenko \cite{tatarenko2018smooth} 
consider problem \eqref{eq:Original Linear} (without finite-sum structure in the objective), use the one-sided Huber penalty to construct the penalty reformulations, and apply SAGA to solve the unconstrained penalty reformulation. An unfortunate deficiency of this approach is that the smoothness parameter of the penalty function as well as the weight parameter on the penalty function must satisfy a complicated set of relations in order to apply a linear convergence rate inherited by the SAGA algorithm. In a follow up paper \cite{nedic2020convergence}, they present an incremental gradient method that dynamically updates the parameters and achieves an asymptotic convergence rate of $O(1/\sqrt{k})$, where $k$ is the iteration counter and each iteration involves working with a single constraint. 
Instead, we use the softplus function as our smooth approximation, which leads to improved estimation results (\cref{thm:estimation linear new}). This improved estimation result, as well as our refined analysis, ultimately leads to an improved \(\tilde O(\frac1{\sqrt{\epsilon}})\) complexity bound in terms of the expected number of incremental steps (i.e., stochastic gradient and constraint evaluations) required to find a solution within a 2-norm distance of $\epsilon$ to the optimal solution.

Lan and Monteriro \cite{lan2013iteration}
consider convex problems with conic constraints \(Ax-b\in \mathcal{K}^*\), where \(\mathcal{K}^*\) is the dual cone of some closed convex cone \(\mathcal{K}\), and use the quadratic penalty \([d_{\mathcal{K}^*}(Ax-b)]^2=\min_{u\in\mathcal{K}^*}\|u+b-Ax\|_2^2\) to relax the constraints. Compared with their setting and methods, ($i$) we consider the case when \({\ell}\) and \({m}\) are large and focus on the complexity with respect to incremental steps (i.e., calls of \(f_i\), \(a_i\), and gradients thereof) and use stochastic methods; ($ii$) we use the softplus function, which is close to \(\max(a_i^Tx-b_i,0)\) instead of a quadratic function to penalize the constraints. Fercoq et al. \cite{fercoq2019almost} consider the objective to be an expectation of random smooth convex functions and the random constraints to be held almost surely, which can be thought as \eqref{eq:Original Linear} when \(m\) tends to infinity, and use a similar quadratic penalty as in \cite{lan2013iteration}. In addition to using a different penalty function, we apply the catalyst SVRG (or SAG, SAGA) method \cite{lin2015universal} which exploits the finite sum structure of the penalty reformulation to obtain a $\tilde O(1/\sqrt{\epsilon})$ expected complexity bound in terms of distance to the optimal solution. On the other hand, Fercoq et al. \cite{fercoq2019almost} demonstrate a $\tilde O(1/\epsilon)$ bound in terms of the objective function gap and 
the constraint violations (for the strongly convex case). Using the sum of squared distance functions \(\frac1{2m}\sum_{i=1}^md_{\mathcal{X}_i}(x)^2\) to penalize the constraints, Mishchenko and Richt\'{a}rik
\cite{mishchenko2018stochastic} obtain \(O(\frac1\epsilon)\) complexity, and their algorithm and results cover nonlinear constraints as well. Compared with their result, we have a better convergence rate in the linear case, and we do not require a global Hoffman-type assumption. 
 
\paragraph{Contributions} This paper has three main contributions. First, we propose to use the softplus function to penalize the constraints, and we show that this penalty function reformulation also arises by applying Nesterov's smoothing technique \cite{nesterov2005smooth} to the Lagrangian of the original problem. The penalty reformulation has a finite sum structure that allows one to employ stochastic gradient methods, and further, the relation with the Lagrangian enables us to query an approximately optimal dual solution after obtaining an approximately optimal primal solution. Second, we use ordinary differential equations techniques  to estimate the 2-norm distance between solutions to the penalty reformulations and the solution to \eqref{eq:Original Linear}, which leads to novel and stronger estimation bounds as compared to existing approaches.
Third, we analyze the complexity of accelerated stochastic methods to obtain approximate primal and dual solutions with our penalty method and compare it with the existing lower bounds on such problems. In addition, we propose to use a nested algorithm to solve the penalty reformulations, which ensures limiting convergence can help to determine the parameters during implementations. Moreover, based on the nested structure and upper bounds on the distance to the optimal solution, we design a screening procedure to effectively eliminate constrains that are inactive at the optimal solution throughout the algorithm, which leads to improved complexity results. 
We present the results of numerical experiments performed on quadratic programming problem and a SVM problems, to demonstrate the effectiveness of the proposed algorithm as compared to existing penalty function approaches.

\paragraph{Organization}
This paper is organized as follows. In \cref{sec:Penalty}, we introduce the penalty reformulation for the constrained problem \eqref{eq:Original Linear} and study its properties. Our nested algorithm based on the penalty reformulation and its complexity analysis is presented in \cref{sec:alg}. In \cref{sec:duality}, we show how to query dual solutions and present results on the convergence of the duality gap. In \cref{sec:screening}, we show how to incorporate the screening procedure to eliminate inactive constraints into our algorithm and improve the complexity. We present our numerical results to demonstrate the effectiveness of our proposed algorithm in \cref{sec:numerical}. We conclude with a few final remarks in \cref{sec:conclusions}.

\pagebreak

\section{Penalty Function Reformulation and Key Properties}
\label{sec:Penalty}

In this section, we examine several useful theoretical properties of a smooth penalized reformulation of problem \eqref{eq:Original Linear}. To penalize the constraints, we use the \textit{softplus} function, 
\begin{equation}\label{Softplus Penalty}
    p_{\delta}(t):=\delta\log(1+\exp(t/\delta)),\ (\delta,t)\in[0,+\infty)\times\mathbb{R},
\end{equation}
where $\delta \geq 0$ is a parameter controlling the smoothness of $p_{\delta}$.
Function $p_\delta$ is used as a loss function in many contexts, including, logistic regression. The softplus penalty may be viewed as an instance of Nesterov's smoothing technique~\cite{nesterov2005smooth} applied to the hinge function $\max(0,t)$ since it holds that
\begin{equation}\label{eq:softplus_nesterov}
    p_{\delta}(t) = \max_{r \in [0,1]}\big \{tr - \delta[r\log(r) + (1-r)\log(1-r)] \big\}.
\end{equation}
The above conjugate representation will be useful in studying the Lagrangian of \eqref{eq:Original Linear} and developing duality gap results in \cref{sec:duality}. 
\begin{figure}[ht]
\centering
\includegraphics[width=0.7\textwidth]{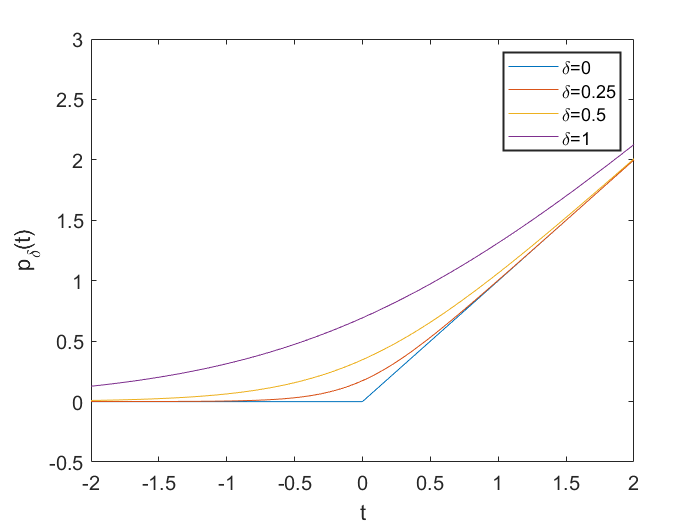}
\caption{Softplus function \(p_{\delta}\) for different values of $\delta$.}\label{Fig_softplus}
\end{figure}
\cref{Fig_softplus} shows the softplus function \(p_{\delta}\) for different values of $\delta$ and highlights how $\delta$ controls the trade-off between smoothness of \(p_{\delta}\) and closeness of the approximation to $p_0(t) = \max(0, t)$. \cref{prop:softplus} formalizes this intuition and provides additional properties of \(p_{\delta}\).

\begin{proposition}\label{prop:softplus}
The softplus function \(p_{\delta}\) satisfies the following properties:
\begin{enumerate}
\item $p_0(t) = \max(0, t)$ for all $t \in \bbR$;
\item $p_\delta$ is differentiable and convex for all $\delta > 0$;
\item $p^\prime_\delta(t) \in [0,1]$ for all $\delta > 0$ and $t \in \bbR$;
\item $|p^\prime_\delta(t_1) - p^\prime_\delta(t_2)| \leq \frac{1}{4\delta}|t_1 - t_2|$ for all $\delta > 0$ and $t_1, t_2 \in \bbR$;
\item $0 \! \leq \! p_{\delta_2}(t) \! - \! p_{\delta_1}(t) \! \leq \! p_{\delta_2}(0) \! - \! p_{\delta_1}(0) \! = \! (\delta_2 \! - \! \delta_1)\log 2$ for all $\delta_2 \geq \delta_1 \geq 0$ and $t \in \bbR$.
\end{enumerate}
\end{proposition}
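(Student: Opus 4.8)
The plan is to verify each of the five properties of the softplus function $p_\delta(t)=\delta\log(1+\exp(t/\delta))$ by direct computation, since each claim reduces to an elementary analysis fact. The only mildly delicate points are the limiting behavior as $\delta\to 0$ (Property~1) and the sharp Lipschitz constant $\frac{1}{4\delta}$ in Property~4.

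For Property~1, I would fix $t$ and compute $\lim_{\delta\to 0^+}\delta\log(1+\exp(t/\delta))$. Writing $\delta\log(1+\exp(t/\delta)) = \delta\log\big(\exp(t/\delta)(\exp(-t/\delta)+1)\big) = t + \delta\log(1+\exp(-t/\delta))$ when $t>0$ (and symmetrically when $t<0$), I would observe that the residual term $\delta\log(1+\exp(-|t|/\delta))$ tends to $0$, yielding $\max(0,t)$; the case $t=0$ gives $\delta\log 2\to 0$. For Property~2, differentiability is immediate since $p_\delta$ is a composition of smooth functions with $1+\exp(t/\delta)>0$; convexity follows by showing $p_\delta''\ge 0$.

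For Property~3, I would compute the first derivative explicitly: $p_\delta'(t) = \frac{\exp(t/\delta)}{1+\exp(t/\delta)} = \sigma(t/\delta)$, the logistic sigmoid, which manifestly lies in $(0,1)$. Property~4 is the main point requiring care: I would compute the second derivative $p_\delta''(t) = \frac{1}{\delta}\sigma(t/\delta)\big(1-\sigma(t/\delta)\big)$, then maximize the map $s\mapsto s(1-s)$ over $s\in(0,1)$, whose maximum value is $\frac14$ attained at $s=\frac12$ (i.e. at $t=0$). This gives $\sup_t p_\delta''(t) = \frac{1}{4\delta}$, and the Lipschitz bound on $p_\delta'$ follows by the mean value theorem. \textbf{The bookkeeping to watch} is simply tracking the factor of $1/\delta$ arising from the chain rule.

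For Property~5, I would treat $p_\delta(t)$ as a function of $\delta$ for fixed $t$ and show monotonicity plus the uniform bound. The cleanest route is to observe that $\delta\mapsto p_\delta(t)$ is nondecreasing (so the lower bound $p_{\delta_2}-p_{\delta_1}\ge 0$ holds) and that the increment $p_{\delta_2}(t)-p_{\delta_1}(t)$ is maximized at $t=0$. I would justify the latter by the conjugate representation~\eqref{eq:softplus_nesterov}: writing $h(r)=-[r\log r+(1-r)\log(1-r)]$ for the entropy term, we have $p_\delta(t)=\max_{r\in[0,1]}\{tr+\delta h(r)\}$, so for fixed $t$ the map $\delta\mapsto p_\delta(t)$ is a pointwise supremum of affine-in-$\delta$ functions, hence convex and nondecreasing in $\delta$ (since $h(r)\ge 0$). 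By Danskin's theorem, $\frac{\partial}{\partial\delta}p_\delta(t)=h(r^*_\delta(t))$ where $r^*_\delta(t)=\sigma(t/\delta)$ is the maximizer; since $h$ attains its maximum $\log 2$ at $r=\frac12$, corresponding to $t=0$, the derivative in $\delta$ is largest when $t=0$. Integrating over $[\delta_1,\delta_2]$ gives $p_{\delta_2}(t)-p_{\delta_1}(t)\le p_{\delta_2}(0)-p_{\delta_1}(0)=(\delta_2-\delta_1)\log 2$, where the final equality is the direct evaluation $p_\delta(0)=\delta\log 2$. I expect Property~4 and the monotonicity-in-$\delta$ argument of Property~5 to be the only steps requiring genuine (if brief) thought; the rest is routine.
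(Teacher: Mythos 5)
Your proposal is correct. Properties 1--4 follow essentially the same route as the paper: the limit computation for $p_0$, the explicit derivatives $p_\delta'(t)=\sigma(t/\delta)\in[0,1]$ and $p_\delta''(t)=\tfrac1\delta\sigma(t/\delta)(1-\sigma(t/\delta))\le\tfrac1{4\delta}$, and the mean value theorem for the Lipschitz bound.

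For Property 5 you take a genuinely different route. The paper works in the $t$-variable: it examines the sign of $p_{\delta_2}'(t)-p_{\delta_1}'(t)$ (nonnegative for $t<0$, zero at $t=0$, nonpositive for $t>0$) to conclude that the difference $t\mapsto p_{\delta_2}(t)-p_{\delta_1}(t)$ is unimodal with its maximum at $t=0$, then invokes continuity in $\delta$ to cover $\delta_1=0$. You instead work in the $\delta$-variable via the conjugate representation \eqref{eq:softplus_nesterov}: monotonicity in $\delta$ comes for free because $p_\delta(t)$ is a supremum of functions affine and nondecreasing in $\delta$, and Danskin's theorem gives $\tfrac{\partial}{\partial\delta}p_\delta(t)=h(\sigma(t/\delta))\le h(\tfrac12)=\log 2$, which integrates to the claimed bound. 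The paper's argument is more elementary (pure calculus on the explicit formula), while yours is more conceptual: it identifies the constant $\log 2$ as the maximum of the entropy prox-function and would transfer verbatim to any Nesterov-smoothed penalty, and it dovetails with the Lagrangian/duality use of \eqref{eq:softplus_nesterov} in \cref{sec:duality}. Two small points to tidy up: the endpoint $\delta_1=0$ in Property 5 requires the continuity of $\delta\mapsto p_\delta(t)$ at $0$, which you have implicitly from Property 1 but should cite; and you rely on the identity \eqref{eq:softplus_nesterov}, which the paper states without proof --- it is a one-line conjugate computation, but strictly it is an extra dependency your argument introduces.
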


The proof of \cref{prop:softplus} is included for completeness in \cref{sec:proofs}.

Let us now consider the penalty reformulation for \eqref{eq:Original Linear}  
\begin{equation}\label{eq:Penalty Linear New}
    \min_{x \in \bbR^n} F_{\xi ,\delta}(x):=F(x)+\xi  \sum_{i=1}^{m} p_{\delta}(a_i^Tx-b_i),
\end{equation}
where $\xi  \geq 0$ is the penalty parameter and $\delta \geq 0$ is the smoothness parameter of the softplus function. 
Let $x_{\xi ,\delta}^\ast$ denote the unique optimal solution of \eqref{eq:Penalty Linear New} for given $\xi  \geq 0$ and $\delta \geq 0$.  
We are interested in studying the relationship between the optimal solution $x^\ast$ for \eqref{eq:Original Linear} and $x_{\xi ,\delta}^\ast$ as we vary the parameters $\xi $ and $\delta$. Towards this goal, throughout the paper, the following assumptions are made concerning problem \eqref{eq:Original Linear}.

\begin{assumption}\label{assum:Assumption Linear New}
Problem \eqref{eq:Original Linear} satisfies the following properties:
\begin{enumerate}
\item The objective function $F$ is globally $\mu$-strongly convex for some $\mu > 0$, i.e., $F - \frac\mu2 \|\cdot\|_2^2$ is a convex function;
\item The component functions $f_i$, $i=1,...,{\ell}$, are convex and globally $L$-smooth for some $L \geq 0$, i.e.,  $\|\nabla f_i(x) - \nabla f_i(y)\|_2 \leq L\|x - y\|_2$ for all $x, y \in \bbR^n$;
\item The proximal term \(\psi\) is a proper convex and closed function;
\item The feasible region $\{x  \in \bbR^n: Ax \leq b\}$ is nonempty, and the rows of $A$ are normalized so that \(\|a_i\|_2 = 1\) for \(i=1,...,{m}\).
\end{enumerate}
\end{assumption}

Let us also consider the dual problem of \eqref{eq:Original Linear},
\begin{equation}\label{eq:Original Dual Linear}
\begin{aligned}
    \max_{\lambda \ge 0}\quad &G(\lambda) := -F^*(-A^T\lambda)-b^T\lambda, 
\end{aligned}
\end{equation}
where \(F^*(y) =\sup_{x}\left\{y^Tx-F(x)\right\}\) is the conjugate function of \(F\). Note that strong convexity of $F$ ensures that $F^\ast$ is a smooth function. Let $\Lambda := \{\lambda : \lambda \geq 0\}$ denote the feasible region of \eqref{eq:Original Dual Linear} and let $\Lambda^\ast$ denote the set of dual optimal solutions of \eqref{eq:Original Dual Linear}.
The following lemma shows that under \cref{assum:Assumption Linear New}, for a large enough \(\xi \ge0\), the solution of the penalty reformulation \eqref{eq:Penalty Linear New} at \(\delta=0\), \(x_{\xi ,0}^*\), equals the solution of the original problem \eqref{eq:Original Linear}, \(x^\ast\).
\begin{lemma}\label{lemma:0 point}
Define \(\bar{\xi} := \inf_{\lambda^\ast \in \Lambda^\ast}\|\lambda^\ast\|_\infty\). Then, $\bar{\xi}$ is finite and it holds that \(x_{\xi ,0}^*=x^*\) for all \(\xi \ge \bar{\xi}\).
\end{lemma}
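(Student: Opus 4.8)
The plan is to recognize $F_{\xi,0}(x) = F(x) + \xi\sum_{i=1}^{m}\max(0,a_i^Tx-b_i)$ as the classical exact $\ell_1$-penalty and to prove the claim through first-order (subdifferential) optimality conditions: I will use the dual/KKT multipliers of \eqref{eq:Original Linear} to certify that the original optimum $x^*$ is itself the minimizer of $F_{\xi,0}$ once $\xi$ is sufficiently large, and then invoke uniqueness of that minimizer.

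First I would establish that $\Lambda^* \neq \emptyset$, which simultaneously yields finiteness of $\bar\xi$. Since $F$ is $\mu$-strongly convex it is coercive, so $x^*$ exists, is unique, and the optimal value is finite; since the constraints $Ax \le b$ are affine and the feasible region is nonempty (\cref{assum:Assumption Linear New}), strong duality holds without any Slater-type condition and the dual optimum is attained. Consequently there is a multiplier vector $\lambda^* \ge 0$ satisfying the stationarity condition $-A^T\lambda^* \in \partial F(x^*)$ together with complementary slackness $\lambda^*_i(a_i^Tx^* - b_i) = 0$ for every $i$, and any such $\lambda^*$ belongs to $\Lambda^*$. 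Hence $\bar\xi = \inf_{\lambda^* \in \Lambda^*}\|\lambda^*\|_\infty$ is finite. I would also record that this infimum is attained: intersecting $\Lambda^*$ (a closed convex set) with the sublevel set $\{\lambda : \|\lambda\|_\infty \le \bar\xi + 1\}$ produces a nonempty compact set on which the continuous map $\lambda \mapsto \|\lambda\|_\infty$ attains its minimum value $\bar\xi$, so there exists $\lambda^* \in \Lambda^*$ with $\|\lambda^*\|_\infty = \bar\xi$.

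The core step is to verify $0 \in \partial F_{\xi,0}(x^*)$ for every $\xi \ge \bar\xi$. Using the sum rule together with the subdifferential of $t \mapsto \max(0,t)$ — which is $\{0\}$ at inactive constraints ($a_i^Tx^* < b_i$) and the segment $[0,1]$ at active constraints ($a_i^Tx^* = b_i$), with feasibility of $x^*$ ruling out positive slack — I would write $\partial F_{\xi,0}(x^*) = \partial F(x^*) + \xi\sum_{i}[0,1]\,a_i$, where only active indices contribute. Taking the multiplier $\lambda^*$ from the previous step and setting $s_i := \lambda^*_i/\xi$, complementary slackness forces $s_i = 0$ at inactive constraints, while at active ones $0 \le s_i \le \|\lambda^*\|_\infty/\xi \le \bar\xi/\xi \le 1$, so each $s_i \in [0,1]$. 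Since $\sum_i \xi s_i a_i = A^T\lambda^*$, the stationarity relation $-A^T\lambda^* \in \partial F(x^*)$ then gives $0 = (-A^T\lambda^*) + \sum_i \xi s_i a_i \in \partial F_{\xi,0}(x^*)$. As $F_{\xi,0}$ is $\mu$-strongly convex its minimizer is unique, so this optimality condition forces $x_{\xi,0}^* = x^*$.

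The main obstacle I anticipate is the bookkeeping around the active/inactive split and, in particular, the boundary case $\xi = \bar\xi$: for $\xi > \bar\xi$ it would suffice to choose any $\lambda^* \in \Lambda^*$ with $\|\lambda^*\|_\infty \le \xi$, but the equality $\xi = \bar\xi$ genuinely needs the compactness argument producing a minimum-$\infty$-norm dual solution. A secondary technical point worth stating carefully is that $\psi$ is permitted to be nonsmooth, so $\partial F(x^*)$ is a set and the argument must carry the specific subgradient $-A^T\lambda^*$ delivered by the KKT conditions rather than a gradient of $F$.
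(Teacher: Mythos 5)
Your proof is correct and follows essentially the same route as the paper's: obtain a KKT multiplier $\lambda^*$, use it to certify $0 \in \partial F_{\xi,0}(x^*)$ via the subdifferential $[0,1]$ of $\max(0,\cdot)$ at zero (with complementary slackness handling the inactive constraints), and conclude by uniqueness of the minimizer of the strongly convex penalized objective. Your extra care in showing that the infimum defining $\bar\xi$ is attained---which is genuinely needed for the boundary case $\xi = \bar\xi$---is a detail the paper's terser proof leaves implicit.
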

\begin{proof}
    By the Lagrangian necessary conditions, there exists \(\lambda^*\in\bbR_{+}^{m}\) such that 
    \[0\in\partial F(x^*)+\sum_{i=1}^{m}\lambda_i^* a_i ,\]
    and $a_i^Tx^*-b_i = 0$ for all $i$ with $\lambda_i^* > 0$. Hence, $\Lambda^\ast$ is non-empty and $\bar{\xi}$ is finite.
    Given \(\bar{\xi} = \inf_{\lambda^\ast \in \Lambda^\ast}\|\lambda^\ast\|_\infty\), since \(\partial_t p_{0}(0)=[0,1]\), when \(\xi \ge \bar{\xi}\) it holds that
    \[0\in \partial F(x^*)+\xi \sum_{i=1}^{m}\partial_x p_0(a_i^Tx^*-b_i)=\partial_x F_{\xi ,0}(x^*).\]
    Thus, \(x^*\) is the unique solution of \eqref{eq:Penalty Linear New}.
\end{proof}
Tatarenko and Nedich \cite{tatarenko2018smooth} present a similar bound on their penalty parameter that is based on Hoffman's bound for the polyhedral feasible region, and Mishchenko and Richt\'{a}rik \cite{mishchenko2018stochastic} present a bound on their penalty parameter based on an assumption that is close to the definition of Hoffman's constant. In both cases, estimating the relevant Hoffman-type constant can be difficult in practice. On the other hand, the bound presented in \cref{lemma:0 point} is based on the infinity norm of a dual optimal solution, which may possibly be estimated in an adaptive manner.
One may also expect the lower bound in \cref{lemma:0 point} to be a less strict requirement because it is based on a local property of optimal solutions as compared to the Hoffman-type bounds that must hold globally.
Finally, in \cref{sec:numerical}, we observe that using conservatively large values of \(\xi \), e.g., values much larger than than the infinity norm of a dual optimal solution, does not have a significant effect on the performance of our algorithms.

Our next objective is to bound the 2-norm distance between \(x_{\xi ,\delta}^\ast\) and \(x_{\xi ,0}^\ast\). \cref{thm:estimation linear new} gives three such upper bounds on $\|x_{\xi ,\delta}^*-x_{\xi ,0}^*\|_2$. Throughout, let \(s_{\max}\) be the maximum singular value of the matrix \(A\), and let \(s_{\mathrm{pmin}}\) be the minimum positive singular value of \(A\).

\begin{theorem}\label{thm:estimation linear new}
The following upper bounds on $\|x_{\xi ,\delta}^*-x_{\xi ,0}^*\|_2$ hold. 
\begin{enumerate}
    \item For any \(\xi >0\) and any $\delta \geq 0$, it holds that
    \begin{equation*}
        \|x_{\xi ,\delta}^*-x_{\xi ,0}^*\|_2\le\sqrt{\frac{2{m}\xi \delta\log2}{\mu}} 
        \cdot
    \end{equation*}
    \item For any \(\xi >0\) and any \(\delta\in [0,\frac{\xi }{\mu}\exp(-2)]\), it holds that \[\|x_{\xi ,\delta}^*-x_{\xi ,0}^*\|_2\le {m}\delta\log\left(\frac{\xi }{\mu\delta}\right).\]
    \item For any \(\xi >0\) and  any \(\delta\in [0,\frac{s_{\max}\max({s_{\mathrm{pmin}}},1)\xi }{\mu}\exp(-1)]\), it holds that \[\|x_{\xi ,\delta}^*-x_{\xi ,0}^*\|_2\le \frac{4\sqrt{m}}{{s_{\mathrm{pmin}}}}\delta\log\left(\frac{s_{\max}\max({s_{\mathrm{pmin}}},1)\xi }{\mu\delta}\right).\]
\end{enumerate}
\end{theorem}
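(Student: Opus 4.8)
The first bound needs no trajectory analysis. Since the penalty term is convex, both $F_{\xi,0}$ and $F_{\xi,\delta}$ are $\mu$-strongly convex, and property~5 of \cref{prop:softplus} gives $0\le F_{\xi,\delta}(x)-F_{\xi,0}(x)\le m\xi\delta\log 2$ for every $x$. Chaining $F_{\xi,0}(x_{\xi,\delta}^*)\le F_{\xi,\delta}(x_{\xi,\delta}^*)\le F_{\xi,\delta}(x_{\xi,0}^*)\le F_{\xi,0}(x_{\xi,0}^*)+m\xi\delta\log 2$, where the middle inequality is optimality of $x_{\xi,\delta}^*$, and combining with the strong-convexity estimate $F_{\xi,0}(x_{\xi,\delta}^*)-F_{\xi,0}(x_{\xi,0}^*)\ge\frac\mu2\|x_{\xi,\delta}^*-x_{\xi,0}^*\|_2^2$ yields the first bound at once. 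For the two sharper bounds I would study the trajectory $\delta\mapsto x_{\xi,\delta}^*$. As $\psi$ (hence $F$) may be nonsmooth, I first replace $F$ by a $C^\infty$, $\mu$-strongly convex approximation $\tilde F$ with $\sup|\tilde F-F|\le\epsilon$ via Azagra's theorem~\cite{azagra2013global}; the bounds I derive will depend only on $\mu,m,\xi,\delta$ and the singular values of $A$, so they persist as $\epsilon\to0$ (the approximate minimizers converge by strong convexity).

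With $F$ smooth the optimality condition $\nabla F(x(\delta))+\xi A^Tq(\delta)=0$, with $q_i(\delta)=p_\delta'(a_i^Tx(\delta)-b_i)$, defines a $C^1$ trajectory by the implicit function theorem, because its Jacobian $H(\delta):=\nabla^2F(x(\delta))+\xi A^TD_\delta A$, where $D_\delta:=\diag(p_\delta''(t_i))$ and $t_i:=a_i^Tx(\delta)-b_i$, is positive definite with smallest eigenvalue at least $\mu$. Differentiating in $\delta$ and using the elementary identities $p_\delta''(t)=\tfrac1\delta p_\delta'(t)(1-p_\delta'(t))$ and $\partial_\delta p_\delta'(t)=-\tfrac t\delta p_\delta''(t)$ produces the ODE $H(\delta)\,x'(\delta)=\tfrac\xi\delta A^TD_\delta\,t(\delta)$, and hence $\|x_{\xi,\delta}^*-x_{\xi,0}^*\|_2\le\int_0^\delta\|x'(s)\|_2\,ds$.

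The crux is to bound $\|x'(s)\|_2$ finely enough that this integral is of order $\delta\log(1/\delta)$ rather than $\sqrt\delta$. Discarding the penalty curvature via $\|H^{-1}\|_2\le1/\mu$ is fatal: the resulting per-time estimate behaves like $1/s$ on constraints that are active at $x_{\xi,0}^*$ and integrates to $+\infty$. The clean operator bound $\|\xi H^{-1}A^TD_\delta^{1/2}\|_2\le\sqrt{\xi/\mu}$ is integrable but only reproduces the $\sqrt\delta$ rate of the first bound. To reach the sharp rate I must keep the $\xi A^TD_\delta A$ term and exploit the cancellation it creates: the push-through identity $H^{-1}A^TD_\delta=M^{-1}B^T(I+\xi BM^{-1}B^T)^{-1}D_\delta^{1/2}$, with $M=\nabla^2F$ and $B=D_\delta^{1/2}A$, shows that the factor $(I+\xi BM^{-1}B^T)^{-1}$ supplies precisely the $1/\xi$ that offsets the prefactor $\xi/\delta$ on the active constraints. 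Complementing this, the force-balance in the optimality condition pins the smoothed multipliers $p_\delta'(t_i)$ at order $1/\xi$, so that the active residual ratios $|t_i/\delta|$ are controlled by $\log(\xi/(\mu\delta))$; this turns the pointwise estimate into $\|x'(s)\|_2\le m\log(\xi/(\mu s))$ up to constants (the second bound, using the triangle inequality and $\|a_i\|_2=1$), and into the $s_{\max},s_{\mathrm{pmin}}$-weighted analogue when one passes between $\|A^T(\cdot)\|_2$ and $\|\cdot\|_2$ through the singular values of $A$ (the third bound). I expect this cancellation—making the penalty curvature quantitatively cancel the $1/\delta$ singularity and fixing the active-constraint scale through the multipliers—to be the main obstacle, along with checking the stated ranges of $\delta$ under which the logarithms remain positive and the intermediate inequalities are valid.

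Finally I would integrate the pointwise bounds using $\int_0^\delta\log(c/s)\,ds=\delta\log(c/\delta)+\delta$, which yields the $\delta\log(\,\cdot\,/\delta)$ shape of the last two bounds after absorbing the additive $\delta$ and the inactive-constraint contributions, which decay exponentially in $1/s$ and are therefore integrable and negligible. Letting $\epsilon\to0$ in the Azagra approximation then transfers all three bounds to the original, possibly nonsmooth, objective $F$.
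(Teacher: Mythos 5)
Your proposal follows essentially the same route as the paper: the first bound via the uniform penalty gap $m\xi\delta\log 2$ plus $\mu$-strong convexity, and the two sharper bounds via Azagra smoothing to reduce to twice-differentiable $F$, implicit differentiation of the stationarity condition in $\delta$, a Woodbury/push-through cancellation of the penalty curvature against the $\xi/\delta$ factor, a uniform $\log(\xi/(\mu s))$-type bound on the weighted residual ratios, and integration over $[0,\delta]$. The only points where the paper's execution is more delicate than your sketch are (i) the exact constant, which comes from bounding each ratio by $\log(\xi/(\mu s))-1$ (the paper's \cref{lemma:1d 1+exp}) so that the $+\delta$ arising from $\int_0^\delta\log(c/s)\,ds$ cancels exactly rather than being "absorbed," and (ii) Claim~3, where the residual vector is explicitly split at the threshold $\delta\log\big(s_{\max}\max(s_{\mathrm{pmin}},1)\xi/(\mu\delta)\big)$ into a large-residual part annihilated by the exponentially small diagonal weights (contributing the $s_{\max}$ factor) and a small-residual part passed through $(AA^T+\tfrac{\mu\delta}{\xi}I)^{-1}$ and bounded via $s_{\mathrm{pmin}}$.
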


First, note that based on the penalty reformulation construction, and the \(5\)th property in \cref{prop:softplus}, 
\[F_{\xi ,\delta}(x)-F_{\xi ,0}(x)\in[0,{m}\xi \delta\log2],\quad \forall x \in \bbR^n.\] 
Then, the first claim follows from the \(\mu\)-strong convexity of $F$. Before presenting the proof of the remaining two claims, we introduce the following notation for residual vectors: 
\[\begin{aligned}
z_{\xi ,\delta}&=A(x_{\xi ,\delta}^*-x_{\xi, 0}^*),\\
\tilde z_{\xi ,\delta}&=Ax_{\xi ,\delta}^*-b.
\end{aligned}\]
Furthermore, the following simple result will be used often in the proofs.
\begin{lemma}\label{lemma:1d 1+exp}
Consider the scalar function $\phi_{c,d} : [0, \infty) \to [0, \infty)$ defined as 
\[
\phi_{c,d}(\theta):=\frac{\theta}{d+c\exp(\theta)} \text{ for \(\theta\in [0,\infty)\) for constants c,d>0.}
\]
Whenever \(\log(\frac{d}{c})\ge 2\), it holds \(\phi_{c,d}(\theta) \le(\log(\frac{d}{c})-1)/d\) for all $\theta\in [0,\infty)$.
\end{lemma}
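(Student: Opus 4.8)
The plan is to locate the global maximizer of $\phi_{c,d}$ on $[0,\infty)$ explicitly and then bound it, rather than trying to estimate the fraction crudely. First I would record the easy structural facts: $\phi_{c,d}(\theta) \ge 0$ with $\phi_{c,d}(0) = 0$, and $\phi_{c,d}(\theta) \to 0$ as $\theta \to \infty$ because the denominator grows exponentially while the numerator is only linear. Hence the supremum over $[0,\infty)$ is attained at an interior stationary point, which I would find by differentiating. A direct computation shows that the sign of $\phi_{c,d}'(\theta)$ matches the sign of the numerator $d + c\exp(\theta)(1-\theta)$, so any stationary point $\theta^\ast$ satisfies $c\exp(\theta^\ast)(\theta^\ast - 1) = d$. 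Since $c,d>0$, this forces $\theta^\ast > 1$; and since the auxiliary function $g(\theta) := \exp(\theta)(\theta-1)$ has derivative $g'(\theta) = \theta\exp(\theta) > 0$ on $(0,\infty)$, it is strictly increasing there, so $\theta^\ast$ is unique and is indeed the global maximizer.

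Next I would evaluate $\phi_{c,d}$ at $\theta^\ast$. From the stationarity relation one has $c\exp(\theta^\ast) = d/(\theta^\ast - 1)$, and substituting this into the denominator gives $d + c\exp(\theta^\ast) = d\theta^\ast/(\theta^\ast - 1)$, whence the clean closed form $\phi_{c,d}(\theta^\ast) = (\theta^\ast - 1)/d$. Thus the claimed bound $(\log(d/c) - 1)/d$ is \emph{equivalent} to the single scalar inequality $\theta^\ast \le \log(d/c)$, and the remaining work is entirely about controlling the location of the stationary point.

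Finally I would establish $\theta^\ast \le \log(d/c)$, which is the crux and the only place the hypothesis $\log(d/c) \ge 2$ enters. Because $g$ is strictly increasing and $g(\theta^\ast) = d/c$, it suffices to verify $g(\log(d/c)) \ge d/c$. But $g(\log(d/c)) = (d/c)\bigl(\log(d/c) - 1\bigr)$, and the hypothesis $\log(d/c) \ge 2$ gives $\log(d/c) - 1 \ge 1$, so indeed $g(\log(d/c)) \ge d/c = g(\theta^\ast)$; monotonicity of $g$ then yields $\theta^\ast \le \log(d/c)$. Combining this with the closed form from the previous paragraph completes the proof.

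The main obstacle is the stationarity equation $c\exp(\theta^\ast)(\theta^\ast-1)=d$, which is transcendental and has no elementary solution, so one cannot bound $\theta^\ast$ simply by solving for it. The device that makes everything go through is to avoid solving it and instead compare $g$ evaluated at the unknown maximizer $\theta^\ast$ against $g$ evaluated at the candidate threshold $\log(d/c)$, exploiting that $g$ is increasing. This reduces the entire estimate to the elementary inequality $\log(d/c) - 1 \ge 1$, which is precisely the stated hypothesis; I would also note in passing that at $\log(d/c) = 2$ the bound is attained with equality (e.g.\ $c=1$, $d=\exp(2)$ gives $\theta^\ast = 2$), confirming the bound is tight.
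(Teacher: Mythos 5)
Your proposal is correct and follows essentially the same route as the paper's proof: both differentiate $\phi_{c,d}$, extract the stationarity relation $c\exp(\theta^\ast)(\theta^\ast-1)=d$, obtain the closed form $\phi_{c,d}(\theta^\ast)=(\theta^\ast-1)/d$, and then bound $\theta^\ast\le\log(d/c)$ via a comparison at $\log(d/c)$ that reduces exactly to the hypothesis $\log(d/c)\ge 2$. The only cosmetic difference is that the paper checks the sign of $\phi_{c,d}'$ at $\log(d/c)$ directly, whereas you route the same comparison through the increasing auxiliary function $g(\theta)=\exp(\theta)(\theta-1)$.
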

The proof of \cref{lemma:1d 1+exp} is included for completeness in \cref{sec:proofs}.

\begin{proof}[Proof of Claim 2 in \cref{thm:estimation linear new}]
First, note it is sufficient to prove the proposition for the case when \(F\) is twice-continuous differentiable. Otherwise, for any \(\epsilon>0\), by Corollary 1.3 in \cite{azagra2013global}, one can construct \(\tilde F\) to be infinitely differentiable and an \(\epsilon\)-approximation of \(F\), i.e., \(F(x)-\epsilon\le \tilde F(x)\le F(x)\). Let \(\tilde x_{\xi ,\delta}^*\) be the solution to \(\min_x\tilde F_{\xi ,\delta}(x)=\tilde F(x)+\xi  \sum_{i=1}^{m} p_{\delta}(a_i^Tx-b_i)\). Then, for \(\delta>0\),
\[\tilde F_{\xi ,\delta}( x_{\xi ,\delta}^*) \le F_{\xi ,\delta}(x_{\xi ,\delta}^*) \le F_{\xi ,\delta}(\tilde x_{\xi ,\delta}^*)\le \tilde F_{\xi ,\delta}(\tilde x_{\xi ,\delta}^*) + \epsilon,\]
which means \(\|\tilde x_{\xi ,\delta}^* - x_{\xi ,\delta}^*\|_2\le\frac{2\epsilon}\mu\), for all \(\delta>0\).  Similarly, we can construct an \(\epsilon\)-approximation of $\tilde F_{\xi ,0}$, \(\hat F_{\xi ,0}\), and the corresponding solution \(\hat x_{\xi ,0}\). Then, \(\|\hat x_{\xi ,0}^*-\tilde x_{\xi ,0}^*\|^2_2\le\frac{2\epsilon}\mu\) and \(\|\hat x_{\xi ,0}^*-x_{\xi ,0}^*\|^2_2\le\frac{4\epsilon}\mu\). Hence, \[\|x_{\xi ,\delta}^*-x_{\xi ,0}^*\|_2\le \|\tilde x_{\xi ,\delta}^*-\tilde x_{\xi ,0}^*\|_2 +(2+2\sqrt 2)\sqrt{\frac{\epsilon}{\mu}} \cdot\]
Because the choice of \(\epsilon\) is arbitrary, we may assume $F$ to be twice-continuous differentiable without loss of generality.

For simplicity of notation, in the proof, we let \(\xi >0\) to be a constant and drop it from \(x_{\xi ,\delta}^*\), \(z_{\xi ,\delta}\), and \(\tilde z_{\xi ,\delta}\).
Now, consider \(x_{\delta}^*\) as a function of \(\delta\). We have \(\nabla F_{\xi ,\delta}(x_{\delta}^*)=0\). Then,
\[\nabla^2 F_{\xi ,\delta}(x_{\delta}^*)\frac{d x_{\delta}^*}{d\delta}+\frac{\partial}{\partial\delta}\nabla F_{\xi ,\delta}(x_{\delta}^*)=0.\]
And since
\[\begin{aligned}
\nabla F_{\xi ,\delta}(x)&=\nabla F(x)+\xi \sum_{i=1}^m\nabla p_{\delta}(a_i^Tx-b_i)\\
&=\nabla F(x)+\xi \sum_{i=1}^m a_i \frac{\exp((a_i^Tx-b_i)/\delta)}{1+\exp((a_i^Tx-b_i)/\delta)} \cdot
\end{aligned}\]
Then,
\[\frac{\partial}{\partial\delta}\nabla F_{\xi ,\delta}(x)=\xi \sum_{i=1}^m a_i\frac{-\frac{a_i^Tx-b_i}{\delta^2}\exp((a_i^Tx-b_i)/\delta)}{(1+\exp((a_i^Tx-b_i)/\delta))^2},\]
\begin{equation}\label{eq:Hessian, linear, infty, m}
    \nabla^2 F_{\xi ,\delta}(x)=\nabla^2 F(x)+\xi \sum_{i=1}^m \frac{\frac{a_ia_i^T}{\delta}\exp((a_i^Tx-b_i)/\delta)}{(1+\exp((a_i^Tx-b_i)/\delta))^2},
\end{equation}
we have
\begin{equation}\label{eq:Multiple Constraints Derivative}
\begin{aligned}
\frac{d x_{\delta}^*}{d\delta}=&\left(\delta \nabla^2 F(x^*_{\delta})+\xi \sum_{i=1}^m \frac{a_ia_i^T\exp((a_i^Tx_{\delta}^*-b_i)/\delta)}{(1+\exp((a_i^Tx_{\delta}^*-b_i)/\delta))^2}\right)^{-1}\\&\left(\xi \sum_{i=1}^m a_i\frac{\frac{a_i^Tx_{\delta}^*-b_i}{\delta}\exp((a_i^Tx_{\delta}^*-b_i)/\delta)}{(1+\exp((a_i^Tx_{\delta}^*-b_i)/\delta))^2}\right).
\end{aligned}
\end{equation}
In other words,
\[\begin{aligned}
&\left\|\frac{d x_{\delta}^*}{d\delta}\right\|_2 \le \left\|\left(\mu\delta I+\xi \sum_{i=1}^m \frac{a_ia_i^T\exp(\tilde z_{\delta,i}/\delta)}{(1+\exp(\tilde z_{\delta,i}/\delta))^2}\right)^{-1} \xi \sum_{i=1}^m a_i\frac{\frac{\tilde z_{\delta,i}}{\delta}\exp(\tilde z_{\delta,i}/\delta)}{(1+\exp(\tilde z_{\delta,i}/\delta))^2}\right\|_2\\
&\le \sum_{i=1}^m\xi \left\|\left(\mu\delta I+\xi  \frac{a_ia_i^T\exp(\tilde z_{\delta,i}/\delta)}{(1+\exp(\tilde z_{\delta,i}/\delta))^2}\right)^{-1}a_i\right\|_2\frac{\frac{\tilde z_{\delta,i}}{\delta}\exp(|\tilde z_{\delta,i}|/\delta)}{(1+\exp(\tilde z_{\delta,i}/\delta))^2},
\end{aligned}\]
where by Woodbury's identity, and the fact \(\|a_i\|_2=1\)
\[\begin{aligned}
&\left\|\left(\mu\delta I+\xi  \frac{a_ia_i^T\exp(\tilde z_{\delta,i}/\delta)}{(1+\exp(\tilde z_{\delta,i}/\delta))^2}\right)^{-1}a_i\right\|_2\\
=& \left\|\left(\frac{1}{\mu\delta}I- \frac{1}{\mu^2\delta^2}a_i\left(\frac{a_i^Ta_i}{\mu\delta}+\frac{(1+\exp(\tilde z_{\delta,i}/\delta))^2}{\xi \exp(\tilde z_{\delta,i}/\delta)}\right)^{-1}a_i^T\right)a_i\right\|_2\\
=&  \frac{1}{\mu\delta}\frac{(1+\exp(\tilde z_{\delta,i}/\delta))^2}{\xi \exp(\tilde z_{\delta,i}/\delta)}\left(\frac{1}{\mu\delta}+\frac{(1+\exp(\tilde z_{\delta,i}/\delta))^2}{\xi \exp(\tilde z_{\delta,i}/\delta)}\right)^{-1}.
\end{aligned}\]
Hence, it holds
\[\begin{aligned}
\left\|\frac{d x_{\delta}^*}{d\delta}\right\|_2 &\le \sum_{i=1}^m \frac{|\tilde z_{\delta,i}|/\delta}{1+\frac{\mu\delta}{\xi }\frac{(1+\exp(\tilde z_{\delta,i}/\delta))^2}{\exp(\tilde z_{\delta,i}/\delta)}}\le \frac{|\tilde z_{\delta,i}|/\delta}{1+\frac{\mu\delta}{\xi }\exp(|\tilde z_{\delta,i}|/\delta)} \cdot
\end{aligned}\]
Then, by \cref{lemma:1d 1+exp},
\[\left\|\frac{d x_{\delta}^*}{d\delta}\right\|_2\le m\left(\log\left(\frac{\xi }{\mu\delta}\right)-1\right),\quad \forall\delta\le \frac{\xi }{\mu}\exp(-2).\]
Hence, whenever \(\delta\in[0,\frac{\xi }{\mu}\exp(-2)]\),
it holds 
\(\|x_{\delta}^*-x_0^*\|_2\le m\delta\log\left(\frac{\xi }{\mu\delta}\right)\).

\end{proof}

Before presenting the proof for Claim 3 in \cref{thm:estimation linear new}, we first show the following proposition on the estimation of \(\|z_{\xi ,\delta}\|_2\), which then leads to an upper bounds on the violation of constraints at \(x_{\xi ,\delta}^*\).   

\begin{proposition}\label{prop:estimation linear new z infty m}
For any \(\xi >0\) and \(\delta\in [0,\frac{s_{\max}^2\xi }{\mu}\exp(-2)]\), it holds that 
\[\|z_{\xi ,\delta}\|_2\le \sqrt{m}\delta\log\left(\frac{s_{\max}^2\xi }{\mu\delta}\right) \cdot \]
\end{proposition}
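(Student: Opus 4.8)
The plan is to follow the same differentiate-and-integrate strategy used for Claim 2 of \cref{thm:estimation linear new}, but now tracking the image $z_{\xi,\delta}=A(x_{\xi,\delta}^*-x_{\xi,0}^*)$ under $A$ rather than the displacement $x_{\xi,\delta}^*-x_{\xi,0}^*$ itself. As in the proof of Claim 2, I would first reduce to the case in which $F$ is twice continuously differentiable by the Azagra $C^\infty$-approximation argument (Corollary 1.3 in \cite{azagra2013global}), so that $x_{\xi,\delta}^*$ is a differentiable function of $\delta$ and the formula \eqref{eq:Multiple Constraints Derivative} for $\tfrac{d}{d\delta}x_{\xi,\delta}^*$ is available. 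Fixing $\xi$ and suppressing it from the notation, I would then view $z_\delta=A(x_\delta^*-x_0^*)$ as a curve in $\bbR^m$ with $z_0=0$ and $\tfrac{d}{d\delta}z_\delta=A\tfrac{d}{d\delta}x_\delta^*$, reducing the proposition to a bound on $\|A\tfrac{d}{d\delta}x_\delta^*\|_2$ followed by integration in $\delta$.

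Concretely, the target is the derivative estimate
\[\left\|A\tfrac{d}{d\delta}x_\delta^*\right\|_2\ \le\ \sqrt{m}\left(\log\!\left(\tfrac{s_{\max}^2\xi}{\mu\delta}\right)-1\right)\qquad\text{for all }\delta\in\big(0,\tfrac{s_{\max}^2\xi}{\mu}e^{-2}\big].\]
Granting this, since $\tfrac{d}{d\delta}\big[\sqrt{m}\,\delta\log(\tfrac{s_{\max}^2\xi}{\mu\delta})\big]=\sqrt{m}\big(\log(\tfrac{s_{\max}^2\xi}{\mu\delta})-1\big)$ and $z_0=0$, integrating from $0$ to $\delta$ over the stated range yields $\|z_\delta\|_2\le\int_0^\delta\|A\tfrac{d}{dt}x_t^*\|_2\,dt\le\sqrt m\,\delta\log(\tfrac{s_{\max}^2\xi}{\mu\delta})$, which is exactly the claimed bound. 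The upper restriction $\delta\le\tfrac{s_{\max}^2\xi}{\mu}e^{-2}$ is precisely what makes $\log(\tfrac{s_{\max}^2\xi}{\mu\delta})\ge2$, so that \cref{lemma:1d 1+exp} may be invoked with $d=1$ and $c=\tfrac{\mu\delta}{s_{\max}^2\xi}$.

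To obtain the derivative estimate I would left-multiply \eqref{eq:Multiple Constraints Derivative} by $A$ and exploit two structural facts. First, writing $w_i:=\tfrac{\exp(\tilde z_{\delta,i}/\delta)}{(1+\exp(\tilde z_{\delta,i}/\delta))^2}=\tfrac{1}{4\cosh^2(\tilde z_{\delta,i}/2\delta)}$ for the softplus curvature weights appearing in \eqref{eq:Hessian, linear, infty, m}, the elementary identity $1/w_i=4\cosh^2(\tilde z_{\delta,i}/2\delta)\ge\exp(|\tilde z_{\delta,i}|/\delta)$ is what converts each constraint's contribution into the form $\tfrac{\theta}{1+c\,e^{\theta}}$ governed by \cref{lemma:1d 1+exp}, producing the logarithm. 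Second, the operator bound $\|Av\|_2\le s_{\max}\|v\|_2$ (equivalently $A^TA\preceq s_{\max}^2 I$), together with $\nabla^2F\succeq\mu I$ from \cref{assum:Assumption Linear New}, is what promotes the effective damping constant from $\tfrac{\mu\delta}{\xi}$ (as in Claim 2) to $\tfrac{\mu\delta}{s_{\max}^2\xi}$, thereby placing $s_{\max}^2$ inside the logarithm. A convenient way to expose the second point is the energy identity obtained by pairing the linear system $(\delta\nabla^2F+\xi A^TDA)\tfrac{d}{d\delta}x_\delta^*=\xi A^TDv$ with $\tfrac{d}{d\delta}x_\delta^*$, where $D=\diag(w_1,\dots,w_m)$ and $v_i=\tilde z_{\delta,i}/\delta$: setting $\zeta=A\tfrac{d}{d\delta}x_\delta^*$ this gives $\delta(\tfrac{d}{d\delta}x_\delta^*)^T\nabla^2F(\tfrac{d}{d\delta}x_\delta^*)+\xi\,\zeta^TD\zeta=\xi\,\zeta^TDv$, and using $\delta(\cdot)^T\nabla^2F(\cdot)\ge\tfrac{\mu\delta}{s_{\max}^2}\|\zeta\|_2^2$ yields the scalar relation $\tfrac{\mu\delta}{s_{\max}^2\xi}\|\zeta\|_2^2+\zeta^TD\zeta\le\zeta^TDv$.

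The hard part will be extracting the two desired improvements \emph{simultaneously}: the factor $\sqrt m$ (rather than $m$) and the logarithmic (rather than $\delta^{-1/2}$) dependence on $\delta$. The crude route—bounding $\|A\tfrac{d}{d\delta}x_\delta^*\|_2\le s_{\max}\|\tfrac{d}{d\delta}x_\delta^*\|_2$ and invoking Claim 2—delivers only $s_{\max}\,m\,\delta\log(\tfrac{\xi}{\mu\delta})$, which carries the wrong factor $m$ and fails to place $s_{\max}^2$ inside the logarithm; moreover, the scalar energy relation above, taken in isolation, has a feasible region whose extreme points overestimate $\|\zeta\|_2$. Obtaining $\sqrt m$ therefore requires summing the per-constraint contributions through a Cauchy--Schwarz/variational treatment of $\zeta^TDv$ rather than the triangle inequality, while the per-constraint exponential damping (through $1/w_i\ge e^{|\tilde z_{\delta,i}|/\delta}$ and \cref{lemma:1d 1+exp}) must be retained to keep the bound logarithmic in $\delta$. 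Reconciling these opposing requirements—crucially, using that large $|\tilde z_{\delta,i}|/\delta$ forces $w_i$ to be exponentially small, so that the corresponding constraints contribute negligibly to $\zeta$—is the delicate step I expect to demand the most care.
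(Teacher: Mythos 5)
Your skeleton coincides with the paper's: reduce to twice continuously differentiable $F$ via the Azagra approximation, differentiate the solution path, establish the derivative bound $\|\tfrac{d}{d\delta}z_\delta\|_2\le\sqrt m\,(\log(\tfrac{s_{\max}^2\xi}{\mu\delta})-1)$, and integrate, with \cref{lemma:1d 1+exp} invoked exactly as you describe (which is also the correct accounting for the restriction $\delta\le\tfrac{s_{\max}^2\xi}{\mu}e^{-2}$). The gap is at the one step you defer as ``delicate,'' and it is not a technicality --- it is the entire content of the proposition. The energy relation you derive,
\[
\frac{\mu\delta}{s_{\max}^2\xi}\|\zeta\|_2^2+\zeta^TD\zeta\le\zeta^TDv ,
\]
is a valid consequence of the linear system, but as a relaxation it provably cannot certify a logarithmic bound. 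Writing $c=\tfrac{\mu\delta}{s_{\max}^2\xi}$, the feasible set is the ellipsoid $\{(\zeta-\zeta^*)^T(cI+D)(\zeta-\zeta^*)\le\rho^2\}$ with center $\zeta^*=\tfrac12(cI+D)^{-1}Dv$ and $\rho^2=\tfrac14\sum_i\tfrac{w_i^2v_i^2}{c+w_i}\le m e^{-2}$; it passes through the origin, and along a coordinate direction $j$ in which $|v_j|$ is large (so $w_j\ll c$) its semi-axis has length $\rho/\sqrt{c+w_j}\approx\rho/\sqrt c$. The ellipsoid therefore contains points with $\|\zeta\|_2=\Theta(\sqrt{m/c})=\Theta(\delta^{-1/2})$, so no Cauchy--Schwarz or variational refinement of this single scalar inequality can recover the $\log(1/\delta)$ rate; after integration you only reproduce the square-root bound of Claim 1 of \cref{thm:estimation linear new}. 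Your closing sentence acknowledges this tension but does not resolve it.

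The missing idea in the paper is to keep an \emph{exact} formula rather than a relaxation: applying Woodbury's identity to $(\delta\nabla^2F+\xi A^T\Pi_{\tilde z_\delta,\delta}A)^{-1}$ and left-multiplying \eqref{eq:Multiple Constraints Derivative Matrix Version} by $A$ makes the terms telescope to
\[
\frac{dz_\delta}{d\delta}\;=\;N\bigl(N+\xi^{-1}\Pi_{\tilde z_\delta,\delta}^{-1}\bigr)^{-1}\frac{\tilde z_\delta}{\delta},\qquad N:=A\bigl(\delta\nabla^2F(x_\delta^*)\bigr)^{-1}A^T\preceq\frac{s_{\max}^2}{\mu\delta}I .
\]
This is precisely the ``center point'' $2\zeta^*$ of your ellipsoid rather than an arbitrary feasible point: the diagonal damping $\xi^{-1}\Pi_{ii}^{-1}\ge\xi^{-1}\exp(|\tilde z_{\delta,i}|/\delta)$ now acts coordinate-by-coordinate inside the resolvent, the $i$-th component is bounded by $\tfrac{|\tilde z_{\delta,i}|/\delta}{1+\frac{\mu\delta}{s_{\max}^2\xi}\exp(|\tilde z_{\delta,i}|/\delta)}$, and \cref{lemma:1d 1+exp} applied per coordinate simultaneously delivers the $\sqrt m$ and the logarithm. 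Without this exact per-coordinate decoupling (or an equivalent substitute), your argument does not close.
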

\begin{proof}
As before, it suffices to prove the result for the case when \(F(x)\) is twice-continuously differentiable. For simplicity of notation,  we let \(\xi >0\) to be a constant and use the simpler notations \(x_{\delta}^*\), \(z_{\delta}\), \(\tilde z_{\delta}\) in the proof. Letting 
\[\Pi_{\tilde z_{\delta},\delta}=\mathrm{diag} \bigg (\frac{\exp(\tilde z_{\delta,1}/\delta)}{(1+\exp(\tilde z_{\delta,1}/\delta))^2},...,\frac{\exp(\tilde z_{\delta,m}/\delta)}{(1+\exp(\tilde z_{\delta,m}/\delta))^2} \bigg),\]
we can express \eqref{eq:Multiple Constraints Derivative} as
\begin{equation}\label{eq:Multiple Constraints Derivative Matrix Version}
\frac{dx_{\delta}^*}{d \delta}=\xi (\delta\nabla^2F(x_{\delta}^*)+\xi A^T\Pi_{\tilde z_{\delta},\delta}A)^{-1}A^T\Pi_{\tilde z_{\delta},\delta}\frac{\tilde z_{\delta}}{\delta} \cdot    
\end{equation}
Then, 
\[
\frac{dz_{\delta}}{d \delta}=\xi A(\delta\nabla^2F(x_{\delta}^*)+\xi A^T\Pi_{\tilde z_{\delta},\delta}A)^{-1}A^T\Pi_{\tilde z_{\delta},\delta}\frac{\tilde z_{\delta}}{\delta} \cdot
\]
By Woodbury's identity,
\[\begin{aligned}
&(\delta\nabla^2F(x_{\delta}^*)+\xi A^T\Pi_{\tilde z_{\delta},\delta}A)^{-1}=(\delta\nabla^2F(x_{\delta}^*))^{-1}\\&-(\delta\nabla^2F(x_{\delta}^*))^{-1}A^T(A(\delta\nabla^2F(x_{\delta}^*))^{-1}A^T+\xi ^{-1}\Pi_{\tilde z_{\delta},\delta}^{-1})^{-1}A(\delta\nabla^2F(x_{\delta}^*))^{-1};
\end{aligned}\]
so
\[\begin{aligned}
\frac{dz_{\delta}}{d \delta}
=&A(\delta\nabla^2F(x_{\delta}^*))^{-1}A^T(A(\delta\nabla^2F(x_{\delta}^*))^{-1}A^T+\xi ^{-1}\Pi_{\tilde z_{\delta},\delta}^{-1})^{-1}\frac{\tilde z_{\delta}}{\delta}\\
=&A\left(\frac{\nabla^2 F(x_{\delta}^*)}{\mu}\right)^{-1}A^T\left(A(\frac{\nabla^2 F(x_{\delta}^*)}{\mu})^{-1}A^T+\frac{\mu\delta}{\xi }\Pi_{\tilde z_{\delta},\delta}^{-1}\right)^{-1}\tilde z_{\delta}/\delta.
\end{aligned}\]
Then, because \(A(\frac{\nabla^2 F(x_{\delta}^*)}{\mu})^{-1}A^T\preceq s_{\max}^2I\), we have
\[\left\|\frac{dz_{\delta}}{d \delta}\right\|_2\le\left\|\left(\frac{|\tilde z_{\delta,1}|/\delta}{1+\frac{\mu\delta}{s_{\max}^2\xi }\exp(|\tilde z_{\delta,1}|/\delta)},...,\frac{|\tilde z_{\delta,m}|/\delta}{1+\frac{\mu\delta}{s_{\max}^2\xi }\exp(|\tilde z_{\delta,m}|/\delta)}\right)\right\|_2.\]
By \cref{lemma:1d 1+exp}, for \(\delta\in [0,\frac{s_{\max}^2\xi }{\mu}\exp(-2)]\), it holds
\[\frac{|\tilde z_{\delta,i}|/\delta}{1+\frac{\mu\delta}{s_{\max}^2\xi }\exp\left(|\tilde z_{\delta,i}|/\delta\right)}\le \log\left(\frac{s_{\max}^2\xi }{\mu\delta}\right)-1.\]
Hence, \(\left\|\frac{dz_{\delta}}{d \delta}\right\|_2\le \sqrt{m}\left(\log\left(\frac{s_{\max}^2\xi }{\mu\delta}\right)-1\right)\), and  \(\|z_{\delta}\|_2\le\sqrt{m}\delta\log(\frac{s_{\max}^2\xi }{\mu\delta})\) follows.
\end{proof}
When \(\xi \ge \bar{\xi}\), by \cref{lemma:0 point}, \(x_{\xi ,\delta}^*=x^*\) and \(\tilde z_{\xi ,0}=Ax^*-b\le 0\). Then, by definition, \(\|(\tilde z_{\xi ,\delta})^+\|_2\le \|z_{\xi ,\delta}\|_2\), and \cref{prop:estimation linear new z infty m} shows that the violation of constrains for the optimal solution to the penalty reformulation \(\|(Ax_{\xi ,\delta}^*-b)^+\|\), is bounded by \(\tilde O(\sqrt{m}\delta)\).

\begin{proof}[Proof of Claim 3 in \cref{thm:estimation linear new}]
Similarly, we may assume that \(F(x)\) is twice-continuously differentiable. For simplicity of notation, we again let \(\xi >0\) to be a constant and use the simplified notations \(x_{\delta}^*\), \(z_{\delta}\), \(\tilde z_{\delta}\) in the proof. Based on the proof of \cref{prop:estimation linear new z infty m}, it holds
\[\begin{aligned}
\left\|\frac{dx_{\delta}}{d \delta}\right\|_2
\le\left\|A^T\left(AA^T+\frac{\mu\delta}{\xi }\Pi_{\tilde z_{\delta},\delta}^{-1}\right)^{-1}\frac{\tilde z_{\delta}}{\delta}\right\|_2.
\end{aligned}\]
Let \(\hat z_{\delta}=(\hat z_{\delta,1},...,\hat z_{\delta,1})^T\), where 
\[\hat z_{\delta,i}=\left\{\begin{aligned}
&\tilde z_{\delta_i}\quad &\text{if } |\tilde z_{\delta_i}|\ge \delta\log\left(\frac{s_{\max}\max({s_{\mathrm{pmin}}},1)\xi }{\mu\delta}\right)\\
&0\quad &\text{o.w.}
\end{aligned}\right. \]
Then, 
\[\begin{aligned}
\left\|\frac{dx_{\delta}}{d \delta}\right\|_2
&\le\left\|A^T(AA^T+\frac{\mu\delta}{\xi }\Pi_{\tilde z_{\delta},\delta}^{-1})^{-1}\frac{\hat z_{\delta}}{\delta}\right\|_2 + \left\|A^T\left(AA^T+\frac{\mu\delta}{\xi }\Pi_{\tilde z_{\delta},\delta}^{-1}\right)^{-1}\frac{\tilde z_{\delta}-\hat z_{\delta}}{\delta}\right\|_2\\
&\le \left\|A^T\Pi_{\tilde z_{\delta},\delta}\frac{\hat z_{\delta}}{\delta}\right\|_2 + \left\|A^T\left(AA^T+\frac{\mu\delta}{\xi }I\right)^{-1}\frac{\tilde z_{\delta}-\hat z_{\delta}}{\delta}\right\|_2\\
&\le s_{\max}\left\|\frac{\xi }{\mu\delta}\Pi_{\tilde z_{\delta},\delta}\frac{\hat z_{\delta}}{\delta}\right\|_2+ \left\|\frac{\tilde z_{\delta}-\hat z_{\delta}}{\delta}\right\|_2/{s_{\mathrm{pmin}}}.
\end{aligned}\]
By the definition of \(\hat z_{\delta}\), for \(\delta\in[0,\frac{s_{\max}\max({s_{\mathrm{pmin}}},1)\xi }{\mu}\exp(-1)]\),
\[\begin{aligned}
\left\|\frac{\xi }{\mu\delta}\Pi_{\tilde z_{\delta},\delta}\frac{\hat z_{\delta}}{\delta}\right\|_2 &\le \frac{\sqrt{m}}{{s_{\mathrm{pmin}}}s_{\max}},\\
\left\|\frac{\tilde z_{\delta}-\hat z_{\delta}}{\delta}\right\|_2 &\le \sqrt{m}\log\left(\frac{s_{\max}\max({s_{\mathrm{pmin}}},1)\xi }{\mu\delta}\right).
\end{aligned}
\]
Hence, the claim holds.
\end{proof}

\section{Nested Penalty Method and Complexity Analysis}
\label{sec:alg}

In this section, we present a generic algorithmic framework for solving the original constrained optimization problem \eqref{eq:Original Linear} via algorithms that solve the unconstrained penalty reformulation \eqref{eq:Penalty Linear New}. Our primary algorithm is a dynamic nested method (\cref{algo_nested_linear}), which solves a sequence of penalty reformulation problems as the smoothing parameter $\delta_k$ tends to zero at an appropriate rate.

We first present an algorithmic framework that uses a generic unconstrained convex optimization method $\calM$ to solve the penalty reformulation problems. We present a corresponding complexity result expressed in terms of the complexity of $\calM$ and our complexity analysis utilizes the results established in \cref{sec:Penalty} concerning bounds between the solutions of the original constrained problem and the penalty formulation. We are primarily interested in the use of fast stochastic gradient methods to solve the penalty reformulations; therefore, we then specialize our analysis to the exemplary case where $\calM$ is the proximal SVRG method with catalyst acceleration \cite{lin2015universal}.
In this case, we show that our algorithm requires at most \(\tilde O(1/\sqrt{\epsilon})\) total stochastic gradient evaluations to achieve a solution within a 2-norm distance of $\epsilon$ to the optimal solution $x^\ast$ to \eqref{eq:Original Linear}. While the complexity analysis is focused primarily on the use the proximal SVRG method with catalyst acceleration \cite{lin2015universal} as a black box to solve the penalty reformulations \eqref{eq:Penalty Linear New}, it is not strongly tied to the choice of this method; other accelerated methods for finite sum problems, such as Katyusha \cite{allen2017katyusha} and the RPDG method \cite{lan2018optimal}, may be utilized as well.

Recall that the objective function of the penalty reformulation \eqref{eq:Penalty Linear New} decomposes into the sum of three terms: {\em (i)} the average of $\ell$ smooth components of the objective function, {\em (ii)} the possibly non-smooth proximal term $\psi$, and {\em (iii)} the weighted sum of smooth penalty functions $p_\delta$ applied to the constraints. Let \(\tilde{F}_{\xi,\delta}(x) := \frac{1}{\ell}\sum_{i=1}^{\ell}f_i(x)+\xi  \sum_{i=1}^{m} p_{\delta}(a_i^Tx-b_i)\) denote the smooth terms so that the objective function $F_{\xi,\delta}$ of \eqref{eq:Penalty Linear New} decomposes as \(F_{\xi,\delta}(x) = \tilde{F}_{\xi,\delta}(x)+\psi(x)\).  
In our generic algorithm, we will utilize the proximal operator and generalized gradient mapping induced by the proximal objective component \(\psi\). Recall that the scaled proximal operator of $\psi$ with parameter $\alpha > 0$ is defined by
\[\mathrm{prox}_{\psi}(\bar{x}; \alpha) := \arg\min_{x \in \bbR^n}\left\{\psi(x)+\frac1{2\alpha}\|x-\bar{x}\|_2^2\right\} \quad \text{for all } \bar{x} \in \bbR^n.\]
The above scaled proximal operator $\mathrm{prox}_{\psi}(\cdot; \alpha)$ is equivalent to the standard proximal operator of the function $\alpha\psi$. The scaled proximal operator $\mathrm{prox}_{\psi}(\cdot; \alpha)$ induces a (generalized) gradient mapping $g_\psi$ that can be evaluated on any smooth convex function $\tilde{F} : \bbR^n \to \bbR$.
For any $\bar{x} \in \bbR^n$ and $\alpha > 0$, the gradient mapping induced by $\psi$ is defined by $g_{\psi}(\bar{x}; \tilde{F}, \alpha) := \frac{1}{\alpha}(\bar{x} - \tilde{x})$, where
\begin{align*}
\tilde{x} :=&~ \mathrm{prox}_{\psi}(\bar{x} - \alpha\nabla\tilde{F}(\bar{x}); \alpha) \\
=&~ \arg\min_{x \in \bbR^n}\left\{\tilde{F}(\bar{x}) + \nabla\tilde{F}(\bar{x})^T(x - \bar{x}) + \frac{1}{2\alpha}\|x - \bar{x}\|_2^2 + \psi(x) \right\} \cdot
\end{align*}
Note that the above notion of gradient mapping based on the proximal operator of the function $\psi$ is considered by Lin et al. \cite{lin2018catalyst} and is a generalization of the gradient mapping introduced by Nesterov \cite{nesterov2018lectures}. The intuition of the gradient mapping is that, when the function $\tilde{F}$ is smooth and strongly convex, the squared norm of the gradient mapping is directly related to the suboptimality gap, as formalized by the following Lemma.

\ignore{
\begin{lemma}\label{lemma:stopping criteria:general proximal old}
Let \(\psi:\mathbb{R}^n\to\mathbb{R}\cup\{\infty\}\) be a proper, convex, and closed proximal function with gradient mapping $g_\psi$, and let $\tilde{F} : \bbR^n \to \bbR$ be convex and $L$-smooth for some $L\ge 0$. Consider $F := \tilde{F} + \psi$, and let $F^\ast := \min_{x \in \bbR^n}F(x)$. Suppose \(F\) is \(\mu\)-strongly convex for some \(0\le \mu\le L \). Then, for any $\alpha \in (0, 1/L]$ and for any $\bar{x} \in \bbR^n$ with $\tilde{x} := \mathrm{prox}_{\psi}(\bar{x} - \alpha\nabla\tilde{F}(\bar{x}); \alpha)$, the following inequalities hold: \\
\begin{enumerate}
    \item \(F(x)\ge F(\tilde x) + g_{\psi}(\bar{x}; \tilde{F}, \alpha)^T(x - \bar{x}) + \frac{\alpha}{2}\|g_{\psi}(\bar{x}; \tilde{F}, \alpha)\|_2^2 + \frac{\mu}{2}\|x - \bar{x}\|_2^2,\) \( \forall x\in\mathbb{R}^n\);
    \item $\|g_{\psi}(\bar{x}; \tilde{F}, \alpha)\|_2^2 \leq \frac{2}{\alpha}(F(\bar{x}) - F^\ast)$;
    \item $\|g_{\psi}(\bar{x}; \tilde{F}, \alpha)\|_2^2 \geq 2\mu(F(\tilde{x}) - F^\ast)$.
\end{enumerate}
\end{lemma}}

\begin{lemma}\label{lemma:stopping criteria:general proximal}
Let \(\psi:\mathbb{R}^n\to\mathbb{R}\cup\{\infty\}\) be a proper, convex, and closed proximal function with gradient mapping $g_\psi$, and let $\tilde{F} : \bbR^n \to \bbR$ be convex and $L$-smooth for some $L\ge 0$. Consider $F := \tilde{F} + \psi$, and let $F^\ast := \min_{x \in \bbR^n}F(x)$. Suppose \(F\) is \(\mu\)-strongly convex for some \(0\le \mu\le L \). Then, for any $\alpha \in (0, 1/2L]$ and for any $\bar{x} \in \bbR^n$ with $\tilde{x} := \mathrm{prox}_{\psi}(\bar{x} - \alpha\nabla\tilde{F}(\bar{x}); \alpha)$, the following inequalities hold: \\
\begin{enumerate}
    \item $\|g_{\psi}(\bar{x}; \tilde{F}, \alpha)\|_2^2 \leq \frac{2}{\alpha}(F(\bar{x}) - F^\ast)$;
    \item $\|g_{\psi}(\bar{x}; \tilde{F}, \alpha)\|_2^2 \geq 2\mu(F(\tilde{x}) - F^\ast)$.
\end{enumerate}
\end{lemma}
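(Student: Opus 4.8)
The plan is to derive both inequalities from the first-order optimality condition of the proximal subproblem, combined with the $L$-smoothness of $\tilde{F}$ and the $\mu$-strong convexity of $F$. Write $g \defeq g_{\psi}(\bar{x}; \tilde{F}, \alpha)$ for brevity, so that by definition $\tilde{x} = \bar{x} - \alpha g$. The optimality condition for $\tilde{x}$ as the minimizer of the proximal subproblem reads $0 \in \nabla\tilde{F}(\bar{x}) + \tfrac1\alpha(\tilde{x} - \bar{x}) + \partial\psi(\tilde{x})$, which rearranges to $v \defeq g - \nabla\tilde{F}(\bar{x}) \in \partial\psi(\tilde{x})$. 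This single inclusion, together with the identities $\nabla\tilde{F}(\bar{x}) + v = g$ and $\tilde{x} - \bar{x} = -\alpha g$, is the workhorse for both parts.

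For the first inequality I would establish the standard sufficient-decrease estimate. Using the descent lemma for the $L$-smooth $\tilde{F}$ to bound $\tilde{F}(\tilde{x})$ by its linearization at $\bar{x}$, and convexity of $\psi$ at $\tilde{x}$ in the form $\psi(\tilde{x}) \le \psi(\bar{x}) + \langle v, \tilde{x} - \bar{x}\rangle$, the cross terms collapse via $\nabla\tilde{F}(\bar{x}) + v = g$ to give $F(\tilde{x}) \le F(\bar{x}) - (\alpha - \tfrac{L\alpha^2}{2})\|g\|_2^2$. Since $\alpha \le \tfrac1{2L} \le \tfrac1L$, the coefficient obeys $\alpha - \tfrac{L\alpha^2}{2} \ge \tfrac\alpha2$; combining with $F(\tilde{x}) \ge F^\ast$ and rearranging yields $\|g\|_2^2 \le \tfrac2\alpha(F(\bar{x}) - F^\ast)$. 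This part uses only convexity and smoothness, not strong convexity.

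For the second inequality the idea is to exhibit a genuine subgradient of $F$ at $\tilde{x}$ whose norm is controlled by $\|g\|_2$, and then invoke gradient domination for strongly convex functions. Since $\tilde{F}$ is differentiable, $\zeta \defeq \nabla\tilde{F}(\tilde{x}) + v \in \partial F(\tilde{x})$, and $\mu$-strong convexity gives $F(\tilde{x}) - F^\ast \le \tfrac1{2\mu}\|\zeta\|_2^2$ (by minimizing over $y$ the strong-convexity lower bound anchored at $\tilde{x}$). Writing $\zeta = g + e$ with $e \defeq \nabla\tilde{F}(\tilde{x}) - \nabla\tilde{F}(\bar{x})$, I would show $\|\zeta\|_2 \le \|g\|_2$. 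The key is co-coercivity of $\nabla\tilde{F}$, namely $\langle e, \tilde{x} - \bar{x}\rangle \ge \tfrac1L\|e\|_2^2$, whence $\langle g, e\rangle = -\tfrac1\alpha\langle \tilde{x} - \bar{x}, e\rangle \le -\tfrac1{\alpha L}\|e\|_2^2$. Therefore $\|\zeta\|_2^2 = \|g\|_2^2 + 2\langle g, e\rangle + \|e\|_2^2 \le \|g\|_2^2 + (1 - \tfrac2{\alpha L})\|e\|_2^2 \le \|g\|_2^2$, the last step using $\alpha \le \tfrac1{2L}$ so that $\tfrac2{\alpha L} \ge 4 \ge 1$. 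Chaining the two estimates gives $F(\tilde{x}) - F^\ast \le \tfrac1{2\mu}\|g\|_2^2$, i.e.\ the claim.

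The main obstacle, and the reason the step size is restricted to $\alpha \le \tfrac1{2L}$, is that strong convexity is assumed only for the sum $F = \tilde{F} + \psi$ and \emph{not} for the smooth part $\tilde{F}$ alone. Consequently the familiar composite gradient-mapping inequality $F(x) \ge F(\tilde{x}) + \langle g, x - \bar{x}\rangle + \tfrac\alpha2\|g\|_2^2 + \tfrac\mu2\|x - \bar{x}\|_2^2$, which would make the second inequality immediate, is in general false here; one sees this already with $\tilde{F} \equiv 0$ and $\psi = \tfrac\mu2\|\cdot\|_2^2$, where the residual is affine and the quadratic term cannot be centered at $\bar{x}$. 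This forces the strong-convexity argument to be anchored at $\tilde{x}$, where a bona fide subgradient of $F$ is available, and the price is the gradient discrepancy $e = \nabla\tilde{F}(\tilde{x}) - \nabla\tilde{F}(\bar{x})$. The crux is thus to show that this discrepancy can only \emph{shrink} the norm of the resulting subgradient, which is exactly what co-coercivity together with $\alpha \le \tfrac1{2L}$ delivers.
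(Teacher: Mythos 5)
Your proof is correct. For the second inequality you follow essentially the same route as the paper: both identify the bona fide subgradient $\zeta = g_{\psi}(\bar{x};\tilde{F},\alpha) - \nabla\tilde{F}(\bar{x}) + \nabla\tilde{F}(\tilde{x}) \in \partial F(\tilde{x})$, minimize the strong-convexity lower bound anchored at $\tilde{x}$, and use co-coercivity of $\nabla\tilde{F}$ together with $\alpha \le 1/(2L)$ to show that the gradient discrepancy $\nabla\tilde{F}(\tilde{x}) - \nabla\tilde{F}(\bar{x})$ cannot increase $\|\zeta\|_2$ beyond $\|g_\psi\|_2$; your computation is a cleaner packaging of exactly the paper's expansion. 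For the first inequality you diverge: you use the classical sufficient-decrease argument (descent lemma for $\tilde{F}$ plus convexity of $\psi$ at $\tilde{x}$), whereas the paper instead plugs $x = \bar{x}$ into the strong-convexity inequality at $\tilde{x}$ and controls the cross term $(\nabla\tilde{F}(\bar{x}) - \nabla\tilde{F}(\tilde{x}))^T(\bar{x}-\tilde{x})$ by $L$-smoothness. Your route is marginally stronger here — it needs only $\alpha \le 1/L$ and no (strong) convexity of $F$ beyond convexity of $\psi$, and it additionally yields $F(\tilde{x}) \le F(\bar{x})$ as a byproduct — while the paper's route keeps both parts under a single umbrella (the subgradient inequality at $\tilde{x}$). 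Your closing remark on why the composite gradient-mapping inequality centered at $\bar{x}$ fails when only $F$, not $\tilde{F}$, is strongly convex is accurate and explains precisely why the paper abandoned its earlier three-part version of this lemma in favor of the present one.
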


\cref{lemma:stopping criteria:general proximal} is an extension of Theorem 2.2.13 of Nesterov \cite{nesterov2018lectures}, which covers the case when \(\psi(x)\) is an indicator function of a convex set. Part (2.) of \cref{lemma:stopping criteria:general proximal} is the same as Lemma 2 of \cite{lin2018catalyst}.
The proof of \cref{lemma:stopping criteria:general proximal} is included, for completeness, in \cref{sec:proofs}.

We presume herein that the unconstrained convex optimization method $\calM$, which is applied to the penalty reformulations \eqref{eq:Penalty Linear New} in our generic algorithm, is a randomized iterative method, such as a stochastic first-order method. That is, whenever we apply $\calM$ to minimize a convex function $F : \bbR^n \to \bbR$, the iterate sequence $\{x_k\}$ generated by $\calM$ is a stochastic process. For any convex function $F : \bbR^n \to \bbR$ such that $F^\ast := \inf_{x \in \bbR^n} F(x) > -\infty$ and for any $\epsilon > 0$, define the stopping time
\begin{equation}\label{eq:stopping_time_f}
T_{\calM, F}(\epsilon; x_0) := \inf\{k \geq 0 : F(x_k) - F^\ast \leq \epsilon\},
\end{equation}
where $\{x_k\}$ is the sequence generated by $\calM$ initialized at $x_0$ and applied to minimize $F$. Notice that $T_{\calM, F}(\epsilon;x_0)$ is a random variable when $\calM$ is a randomized method. We assume throughout that $\bbE[T_{\calM,F}(\epsilon;x_0)] < \infty$ for any $\epsilon > 0$. In the case when the convex function $F$ decomposes as $F = \tilde{F} + \psi$ as in the conditions of \cref{lemma:stopping criteria:general proximal}, we can also naturally define, for any fixed $\alpha > 0$ and $\epsilon^\prime > 0$, a stopping time in terms of the gradient mapping:
\begin{equation}\label{eq:stopping_time_map}
\tilde{T}_{\calM, F}(\epsilon^\prime ; x_0, \alpha) := \inf \big \{k \geq 0 : \|g_{\psi}(x_k; \tilde{F}, \alpha)\|_2 \leq \epsilon^\prime \big \}.
\end{equation}
Items (2.)\ and (3.)\ of \cref{lemma:stopping criteria:general proximal} clearly imply that these two distinct notions of stopping times can be related to each other. The main advantage of the stopping time \eqref{eq:stopping_time_map} over \eqref{eq:stopping_time_f} is that \eqref{eq:stopping_time_map} provides an implementable stopping criterion even when $F^\ast$ is unknown, as long as the gradient mapping is computable. The previously defined stopping times will be useful in the analysis of our nested algorithm, which involves several stages where $\calM$ is applied separately at each stage.

We are now ready to present our generic nested penalty method in \cref{algo_nested_linear}. \cref{algo_nested_linear} requires specifying a value for the penalty parameter $\xi > 0$ and a value for the ``smoothness scaling paramter'' $\eta > 1$ which controls the rate at which we shrink the smoothness parameter $\delta$ towards zero. The unconstrained convex optimization method $\calM$ as well as its total number of iterations at each stage $\tau_t$ are both currently left as generic. Note that, after finding an approximate solution $\hat{x}_t$ using method $\calM$, an additional computation is performed to compute the proximal operator as required in the calculation of the gradient mapping at $\hat{x}_t$.

\begin{algorithm}
\caption{Generic Nested Penalty Method}
\label{algo_nested_linear}
\begin{algorithmic}
    \STATE {\bf Parameters:}  Penalty parameter $\xi > 0$, smoothness scaling parameter \(\eta>1\), and generic unconstrained convex optimization method $\calM$.
    \STATE {\bf Initialize:} $\hat{x}_{-1} \in \bbR^n$, $\delta_{0} > 0$.
    \FOR{$t = 0, 1, \ldots, T$}
    \STATE  1. Apply method $\calM$ on the penalty reformulation problem \eqref{eq:Penalty Linear New}, with penalty parameter $\xi$ and smoothness parameter $\delta_t$, initialized at $\tilde{x}_{t-1}$ and for $\tau_t \geq 1$ total iterations, to obtain an approximate solution
    \begin{equation*}
        \hat{x}_t \approx \arg\min_{x \in \bbR^n} \left\{F_{\xi ,\delta_t}(x) := F(x) + \xi\sum_{i=1}^{m} p_{\delta_t}(a_i^Tx-b_i)\right\} \cdot
    \end{equation*}
    \STATE 2. Choose step-size $\alpha_t > 0$ and set $\tilde{x}_t \gets \mathrm{prox}_{\psi}(\hat{x}_t - \alpha_t\nabla\tilde{F}_{\xi,\delta_t}(\hat{x}_t); \alpha_t)$.
    \STATE 3. Update $\delta_{t+1} \gets \delta_{t}/\eta$.
    \ENDFOR
\end{algorithmic}
\end{algorithm}

Our main result concerning the generic \cref{algo_nested_linear} is presented in \cref{lem:Nested Complexity-Generic}, which shows that the sequence $\{\tilde{x}_t\}$ converges linearly to the optimal solution $x^\ast$ of \eqref{eq:Original Linear} when the subproblems are solved to a sufficiently small accuracy.

\begin{lemma}\label{lem:Nested Complexity-Generic}
Consider applying the generic nested penalty method (\cref{algo_nested_linear}) using penalty parameter \(\xi \ge \bar{\xi}\) and initial smoothness parameter $\delta_0 \leq \frac{\xi}{\mu}\exp(-2)$. Let $\{\epsilon_t\}$ be an accuracy sequence for the penalty reformulation subproblems satisfying
\begin{equation*}
0 < \epsilon_t \leq \frac12 m\mu\delta_t^2\log^2\left(\frac{\xi s_{\max}}{\mu\delta_t}\right) \text{ for } t \geq 0,
\end{equation*}
and suppose that stopping criterion  \eqref{eq:stopping_time_map} is used with step-size \(0 < \alpha_t \leq 1/(L+\frac{\xi s_{\max}^2}{4\delta_t})\), i.e., $\tau_t = \tilde{T}_{\calM, F_{\xi ,\delta_t}}(\sqrt{2\mu\epsilon_t}; \tilde{x}_{t-1}, \alpha_t)$ for all $t \geq 0$.
Then, for any $t \geq 0$, it holds that:
\begin{equation}\label{eq:generic_convergence}
\begin{aligned}
\|\tilde{x}_t - x^\ast\|_2 &\leq \left(\min\left\{m,\frac{4\sqrt{m}}{{s_{\mathrm{pmin}}}}\right\}+\sqrt{m}\right) \delta_t \log\left(\frac{\xi s_{\max}}{\mu\delta_t}\right)\\&=\frac{\left(\min\left\{m,\frac{4\sqrt{m}}{{s_{\mathrm{pmin}}}}\right\}+\sqrt{m}\right)\delta_0}{\eta^t}\left[t\log(\eta) + \log\left(\frac{\xi s_{\max}}{\mu\delta_0}\right)\right].
\end{aligned}
\end{equation}
If instead the number of inner iterations $\tau_t$ is chosen so that $F_{\xi ,\delta_t}(\hat{x}_t) - F_{\xi ,\delta_t}^\ast \leq \epsilon_t$ for all $t \geq 0$, then \eqref{eq:generic_convergence} also holds.
\end{lemma}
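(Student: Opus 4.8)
The plan is to control $\|\tilde x_t - x^\ast\|_2$ by splitting it into an \emph{optimization error} and an \emph{estimation error}. Since $\xi \ge \bar\xi$, \cref{lemma:0 point} gives $x^\ast = x_{\xi,0}^\ast$, so by the triangle inequality
\[
\|\tilde x_t - x^\ast\|_2 \;\le\; \underbrace{\|\tilde x_t - x_{\xi,\delta_t}^\ast\|_2}_{\text{optimization error}} \;+\; \underbrace{\|x_{\xi,\delta_t}^\ast - x_{\xi,0}^\ast\|_2}_{\text{estimation error}}.
\]
The estimation error is handled directly by \cref{thm:estimation linear new}: because $\eta > 1$ we have $\delta_t = \delta_0/\eta^t \le \delta_0 \le \frac{\xi}{\mu}\exp(-2)$, so each $\delta_t$ lies in the admissible ranges of both Claim 2 and Claim 3 (the latter range being even larger, since $s_{\max}\ge 1$ and $\max(s_{\mathrm{pmin}},1)\ge 1$). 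Taking the smaller of the two bounds produces the coefficient $\min\{m, \frac{4\sqrt m}{s_{\mathrm{pmin}}}\}$.

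For the optimization error, I would first record that the smooth part $\tilde F_{\xi,\delta_t}$ is $\big(L + \tfrac{\xi s_{\max}^2}{4\delta_t}\big)$-smooth: the finite sum $\frac1\ell\sum_i f_i$ contributes $L$, while the penalty Hessian satisfies $\xi\sum_i p^{\prime\prime}_{\delta_t}(a_i^Tx-b_i)\,a_ia_i^T \preceq \frac{\xi}{4\delta_t}A^TA \preceq \frac{\xi s_{\max}^2}{4\delta_t}I$ by property (4) of \cref{prop:softplus}. Hence the prescribed step-size $\alpha_t \le 1/(L+\tfrac{\xi s_{\max}^2}{4\delta_t})$ respects this smoothness and makes \cref{lemma:stopping criteria:general proximal} applicable. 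The stopping rule enforces $\|g_{\psi}(\hat x_t;\tilde F_{\xi,\delta_t},\alpha_t)\|_2 \le \sqrt{2\mu\epsilon_t}$, so part (2) of \cref{lemma:stopping criteria:general proximal} yields $F_{\xi,\delta_t}(\tilde x_t) - F_{\xi,\delta_t}^\ast \le \epsilon_t$; since $F_{\xi,\delta_t}$ is $\mu$-strongly convex (the penalty being convex by \cref{prop:softplus}), it follows that $\|\tilde x_t - x_{\xi,\delta_t}^\ast\|_2 \le \sqrt{2\epsilon_t/\mu}$. The hypothesized calibration $\epsilon_t \le \frac12 m\mu\delta_t^2\log^2(\frac{\xi s_{\max}}{\mu\delta_t})$ is chosen precisely so that this collapses to $\sqrt m\,\delta_t\log(\frac{\xi s_{\max}}{\mu\delta_t})$, the $\sqrt m$ term in the claim.

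Adding the two bounds yields the first line of \eqref{eq:generic_convergence}; here the logarithmic arguments coming from Claims 2 and 3 and from the optimization term are all dominated by the single factor $\log(\frac{\xi s_{\max}}{\mu\delta_t})$, using $s_{\max}\ge 1$, which follows from the unit-norm rows via $\|A\|_F^2 = m \le \min(m,n)\,s_{\max}^2$. The explicit second line then follows by inserting $\delta_t = \delta_0/\eta^t$ and expanding $\log(\frac{\xi s_{\max}}{\mu\delta_t}) = \log(\frac{\xi s_{\max}}{\mu\delta_0}) + t\log\eta$. For the alternative stopping rule, the bound $F_{\xi,\delta_t}(\hat x_t) - F_{\xi,\delta_t}^\ast \le \epsilon_t$ combined with the descent property of the proximal-gradient step (valid since $\alpha_t \le 1/(L+\tfrac{\xi s_{\max}^2}{4\delta_t})$) gives $F_{\xi,\delta_t}(\tilde x_t) \le F_{\xi,\delta_t}(\hat x_t)$, so the same strong-convexity argument delivers the identical bound on $\tilde x_t$.

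I expect the main obstacle to be the bookkeeping that forces the optimization and estimation errors to share the common order $O(\delta_t\log(1/\delta_t))$: in particular, verifying that the smoothness constant grows like $1/\delta_t$ (which is what dictates the shrinking step-size and the calibration of $\epsilon_t$) and confirming that the three distinct logarithmic arguments can honestly be absorbed into $\log(\xi s_{\max}/(\mu\delta_t))$. Everything else is a direct assembly of \cref{lemma:0 point}, \cref{thm:estimation linear new}, and \cref{lemma:stopping criteria:general proximal}.
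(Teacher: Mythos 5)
Your proposal is correct and matches the paper's own argument essentially step for step: the same triangle-inequality split into $\|\tilde x_t - x_{\xi,\delta_t}^\ast\|_2 + \|x_{\xi,\delta_t}^\ast - x^\ast\|_2$, with the first term controlled via \cref{lemma:stopping criteria:general proximal} (part (2.) for the gradient-mapping stopping rule, the descent property for the alternative rule) together with $\mu$-strong convexity, and the second term by Claims 2--3 of \cref{thm:estimation linear new} under $\delta_t \le \delta_0 \le \frac{\xi}{\mu}\exp(-2)$. Your added verifications (the $\bigl(L+\tfrac{\xi s_{\max}^2}{4\delta_t}\bigr)$-smoothness of $\tilde F_{\xi,\delta_t}$ and $s_{\max}\ge 1$ for absorbing the logarithms) are details the paper leaves implicit, so there is nothing further to reconcile.
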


\begin{proof}
Note that, when $\tau_t = \tilde{T}_{\calM, F_{\xi ,\delta_t}}(\sqrt{2\mu\epsilon_t}; \tilde{x}_{t-1}, \alpha_t)$, item (2.)\ of \cref{lemma:stopping criteria:general proximal} implies $F_{\xi ,\delta_t}(\tilde{x}_t) - F_{\xi ,\delta_t}^\ast \leq \epsilon_t$. Furthermore, when it is assumed that $F_{\xi ,\delta_t}(\hat{x}_t) - F_{\xi ,\delta_t}^\ast \leq \epsilon_t$, then item (1.) of \cref{lemma:stopping criteria:general proximal} implies $F_{\xi ,\delta_t}(\tilde{x}_t) - F_{\xi ,\delta_t}^\ast \leq F_{\xi ,\delta_t}(\hat{x}_t) - F_{\xi ,\delta_t}^\ast \leq \epsilon_t$. Thus, under both cases we have that $F_{\xi ,\delta_t}(\tilde{x}_t) - F_{\xi ,\delta_t}^\ast \leq \epsilon_t$. Furthermore, it holds that 
\begin{equation*}
\|\tilde{x}_t - x^\ast\|_2 \leq \|\tilde{x}_t - x_{\xi ,\delta_t}^*\|_2 + \|x_{\xi ,\delta_t}^* - x^*\|_2 \leq \sqrt{\frac{2\epsilon_t}{\mu}} + \|x_{\xi ,\delta_t}^* - x^*\|_2.
\end{equation*}
Then, the result holds by \(\mu\)-strong convexity of \(F_{\xi,\delta_t}\), and parts 2--3 of \cref{thm:estimation linear new}. 
\end{proof}

Note that \cref{lem:Nested Complexity-Generic} requires availability of the constants $\bar{\xi}$, $\mu$, and $s_{\max}$ and in order to set the penalty parameter $\xi$ and the sequences of accuracy parameters $\epsilon_t$ and step-sizes $\alpha_t$. As mentioned previously, using a conservatively large value of $\xi \geq \bar{\xi}$ is not prohibitive in practice, as observed in the numerical experiments presented in \cref{sec:numerical}. Likewise, depending on the information available about the original problem \eqref{eq:Original Linear}, one can replace $\mu$ with an available lower bound and $s_{\max}$ with an available upper bound (e.g., $s_{\max} \leq \sqrt{m}$ by part (4.)\ of \cref{assum:Assumption Linear New}) and the results of \cref{lem:Nested Complexity-Generic} will carry through. Using the stopping criterion \eqref{eq:stopping_time_map} requires an additional calculation of the gradient mapping at each iteration of the method $\calM$. The stochastic accelerated methods we consider, including the catalyst accelerated SVRG, Katyusha, and RPDG, all require periodic cyclic calculations of the full gradients and therefore this stopping criterion can be implemented without any additional complexity. (Note that, in the case of catalyst accelerated SVRG for example, we consider each ``outer loop iteration'' to be one iteration of $\calM$.)

We are now ready to specialize our complexity results to the exemplary case of using the proximal SVRG method with catalyst acceleration \cite{lin2015universal} in place of the generic method $\calM$ in \cref{algo_nested_linear}. We state the overall complexity in terms of the number of ``incremental steps,'' which we define as any calculation of: {\em (i)} the objective components $f_i(x)$ and their gradients $\nabla f_i(x)$, {\em (ii)} the penalty function values $p_\delta(a_i^Tx - b_i)$ and their gradients, and {\em (iii)} the proximal operator $\mathrm{prox}_{\psi}(\bar{x}; \alpha)$. 
In the remainder of the text, \(\tilde O\) notation is used to hide universal constants and poly-logarithmic dependencies on problem parameters and $1/\epsilon$.

\begin{proposition}\label{prop:Nested Complexity-SVRG Catalyst}
Consider applying the nested penalty method (\cref{algo_nested_linear}), with penalty parameter \(\xi \ge \bar{\xi}\) and initial smoothness parameter $\delta_0 = \frac{\xi}{\mu}\exp(-2)$, and using proximal SVRG with catalyst acceleration \cite{lin2015universal, lin2018catalyst} to solve the penalty reformulation subproblems. Let $\{\epsilon_t\}$ be an accuracy sequence for the subproblems satisfying
\begin{equation*}
\epsilon_t = \min\left\{\frac12 m\mu\delta_t^2\log^2\left(\frac{\xi s_{\max}}{\mu\delta_t}\right), {m}\xi\delta_t\log\left(\frac{\xi s_{\max}}{\mu\delta_t}\right)\right\} \text{ for } t \geq 0,
\end{equation*}
and suppose that the stopping criterion based on the gradient mapping \eqref{eq:stopping_time_map} is used with step-size \(\alpha_t = 1/(L+\frac{\xi s_{\max}^2}{4\delta_t})\), i.e., $\tau_t = \tilde{T}_{\calM, F_{\xi ,\delta_t}}(\sqrt{2\mu\epsilon_t}; \tilde{x}_{t-1}, \alpha_t)$ for all $t \geq 0$.
Then, for any desired accuracy \(\epsilon\in (0,\frac{{m}\xi }{\mu}\exp(-2)]\), the expected number of incremental steps required to satisfy 
\(\|\tilde{x}_T - x^\ast\|_2 \leq \epsilon\)
is upper bounded by
\begin{equation}\label{eq:tilde O complexity}
\tilde O\left({\ell}+\sqrt{\frac {({\ell}+{m})L} \mu}+{m}\sqrt{\frac{({\ell}+{m})\xi }{\mu\epsilon}}\right).
\end{equation}
\end{proposition}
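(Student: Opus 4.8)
The plan is to combine the deterministic distance bound of \cref{lem:Nested Complexity-Generic}, which fixes how small $\delta_T$ must be and hence the number of outer stages, with the known expected incremental-step complexity of catalyst-accelerated proximal SVRG applied stage-by-stage, and then to sum a geometric series over the stages. I would first fix the number of stages. By \cref{lem:Nested Complexity-Generic}, once each subproblem is solved to accuracy $\epsilon_t$ under the gradient-mapping stopping rule, the iterate obeys the deterministic estimate $\|\tilde{x}_t - x^\ast\|_2 \le C_m\,\delta_t\log(\xi s_{\max}/(\mu\delta_t))$ with $C_m := \min\{m, 4\sqrt{m}/s_{\mathrm{pmin}}\} + \sqrt{m} = O(m)$. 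Since $\delta_t = \delta_0\eta^{-t}$, it suffices to take $T = O(\log_\eta(C_m\delta_0/\epsilon))$, which is poly-logarithmic in $1/\epsilon$ and the problem constants, giving $1/\delta_T = \tilde O(m/\epsilon)$; the hypothesis $\epsilon \le \frac{m\xi}{\mu}e^{-2}$ guarantees $\delta_T \le \delta_0$ and keeps every $\delta_t$ in the admissible range of \cref{thm:estimation linear new}. Because the stopping rule forces $F_{\xi,\delta_t}(\tilde{x}_t) - F_{\xi,\delta_t}^\ast \le \epsilon_t$ surely via item (2.) of \cref{lemma:stopping criteria:general proximal}, the count $T$ is deterministic and all randomness resides in the inner iteration counts $\tau_t$.

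Next I would bound the expected incremental cost of a single stage. The smooth part $\tilde{F}_{\xi,\delta_t}$ is a finite sum of $N = \ell + m$ terms; writing it as $\frac{1}{N}\sum_j \phi_j$ with $\phi_j = \frac{N}{\ell}f_j$ for the objective components and $\phi_j = N\xi p_{\delta_t}(a_\cdot^Tx - b_\cdot)$ for the penalty components, the average component smoothness is $\bar{L}_t = \frac1N\sum_j L_j = L + \frac{m\xi}{4\delta_t}$, using $p_{\delta_t}'' \le 1/(4\delta_t)$ and $\|a_i\|_2 = 1$ from \cref{prop:softplus}, while the whole function is $\mu$-strongly convex. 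Invoking the catalyst proximal-SVRG complexity \cite{lin2015universal, lin2018catalyst}, the expected number of incremental steps to drive the subproblem from warm-start gap $\Delta_t$ to accuracy $\epsilon_t$ is $\tilde O\big((N + \sqrt{N\bar{L}_t/\mu})\log(\Delta_t/\epsilon_t)\big)$. I would control the logarithmic factor by (i) bounding the warm-start gap $\Delta_t = F_{\xi,\delta_t}(\tilde{x}_{t-1}) - F_{\xi,\delta_t}^\ast \le \epsilon_{t-1} + m\xi(\delta_{t-1}-\delta_t)\log 2$ through the monotonicity estimate in item (5.) of \cref{prop:softplus}, and (ii) translating the gradient-mapping stopping time into a function-value stopping time via item (1.) of \cref{lemma:stopping criteria:general proximal}; the two-sided choice of $\epsilon_t$ as the minimum of the two stated expressions is precisely what keeps $\log(\Delta_t/\epsilon_t)$ poly-logarithmic in $1/\epsilon$ across all regimes.

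Finally I would sum over the stages $t = 0,\dots,T$. The base term contributes $\tilde O(NT) = \tilde O(\ell + m)$. For the accelerated term, $\bar{L}_t$ grows geometrically in $t$ because $1/\delta_t = \eta^t/\delta_0$, so a geometric-series argument dominated by the last stage gives $\sum_{t=0}^{T}\sqrt{\bar{L}_t} = \tilde O(\sqrt{\bar{L}_T}) = \tilde O(\sqrt{L} + \sqrt{m\xi/\delta_T})$, hence $\sum_t \sqrt{N\bar{L}_t/\mu} = \tilde O(\sqrt{N\bar{L}_T/\mu})$. Substituting $1/\delta_T = \tilde O(m/\epsilon)$ yields $\sqrt{N \cdot \tfrac{m\xi}{4\delta_T}/\mu} = \tilde O\big(m\sqrt{(\ell+m)\xi/(\mu\epsilon)}\big)$, and combining with the $\sqrt{NL/\mu}$ piece while absorbing the $m$ of the base cost into the (larger) final term produces exactly the bound \eqref{eq:tilde O complexity}.

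I expect the main obstacle to be the careful bookkeeping that keeps every logarithmic factor poly-logarithmic: reconciling the gradient-mapping stopping criterion with the function-value convergence guarantee of catalyst SVRG, bounding the cross-stage warm-start suboptimality, and verifying that the accuracy sequence $\{\epsilon_t\}$ together with the step-sizes $\alpha_t = 1/(L + \tfrac{\xi s_{\max}^2}{4\delta_t})$ keeps $\log(\Delta_t/\epsilon_t)$ under control. By contrast, the conceptual core — identifying $\bar{L}_t = L + \frac{m\xi}{4\delta_t}$ and recognizing that the geometric growth of $\bar{L}_t$ lets the final stage dominate the sum — is comparatively direct once \cref{lem:Nested Complexity-Generic} pins down $\delta_T$ in terms of $\epsilon$.
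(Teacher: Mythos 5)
Your proposal is correct and follows essentially the same route as the paper's proof: the same choice of $T$ via $\delta_T=\tilde O(\epsilon/m)$ from \cref{lem:Nested Complexity-Generic}, the same decomposition of $\tilde F_{\xi,\delta_t}$ into $\ell+m$ components with average smoothness $L+\frac{m\xi}{4\delta_t}$, the same warm-start and stopping-criterion bookkeeping via \cref{prop:softplus} and \cref{lemma:stopping criteria:general proximal}, and the same geometric-series summation dominated by the final stage. No substantive differences.
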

\begin{proof}
 Suppose we use the sequence of \(\delta\): $\delta_0> \cdots >\delta_T$, where $\delta_0=\frac{\xi }{\mu}\exp(-2)$, $\delta_{t-1}/\delta_{t}=\eta>1$, \(t=1,...,T\), and $T$ is the smallest positive integer such that inequality
\[2{m}\delta_T\log\left(\frac{\xi s_{\max}}{\mu\delta_T}\right)\le\epsilon\]
holds. Since $\frac{1}{w}>2\log(\frac1w)$ for all \(w>0\), we have
\[\frac{w}{2\log(\frac1w)}\log\left(\frac{2\log(\frac1w)}w\right)\le w,\ \forall w\in(0,1).\]
Letting \(w = \frac{\mu\epsilon}{2{m}\xi s_{\max}}\) in the inequality, we see that
\[2{m}\delta\log\left(\frac{\xi s_{\max}}{\mu\delta}\right)\le \epsilon,\ \forall \delta\in\left(0,\frac{\epsilon}{4{m}}\left(\log\left(\frac{2{m}\xi s_{\max}}{\mu\epsilon}\right)\right)^{-1}\right].\]
Hence, \[\delta_T\ge\frac{\epsilon}{4{m}\eta}\left(\log\left(\frac{2{m}\xi s_{\max}}{\mu\epsilon}\right)\right)^{-1}.\]
Then, by \cref{lem:Nested Complexity-Generic}, \(\|\tilde x_T-x^*\|_2\le 2m\delta_T\log\left(\frac{\xi s_{\max}}{\mu\delta_T}\right)\leq\epsilon\) holds.

Since we start minimizing \(F_{\xi,\delta_{t+1}}(x)\),  \(t=0,...,T-1\), at \({\tilde x_t}\), by \cref{lem:Nested Complexity-Generic}
\[\begin{aligned}
&F_{\xi,\delta_{t+1}}({\tilde x_t})-F_{\xi,\delta_{t+1}}(x_{\xi,\delta_{t+1}}^*)\\ 
\le &\ (F_{\xi,\delta_{t+1}}({\tilde x_t})-F_{\xi,\delta_{t}}(x_{\xi,\delta_{t}}^*))+(F_{\xi,\delta_{t}}(x_{\xi,\delta_{t}}^*)-F_{\xi,\delta_{t+1}}(x_{\xi,\delta_{t+1}}^*))\\
\le &\ m\xi\delta_{t} \log\left(\frac{\xi s_{\max}}{\mu\delta_{t+1}}\right) + m\xi(\delta_t-\delta_{t+1}) \log2\\
\le &\  (2\eta-1)m\xi\delta_{t+1} \log\left(\frac{\xi s_{\max}}{\mu\delta_{t+1}}\right).
\end{aligned}
\]

For \(t=0,1,...,T\), we consider \(\frac{{\ell}+{m}}{\ell} f_i(x)\), \(i=1,...,{\ell}\), and \(({\ell}+{m})\xi  p_{\delta_t}(a_j^Tx-b_j)\), \(j=1,...,{m}\) to be the ${\ell}+{m}$ components. Then, the problems are \(\mu\)-strongly convex, and consist of \({\ell}\) components that are \(\frac{{\ell}+{m}}{\ell}L\)-smooth, and \({m}\) components that are \(L_t\)-smooth, respectively, where 
\[L_t=\frac{({\ell}+{m})\xi }{4\delta_t}.\]
 Then, since $\tau_t = \tilde{T}_{\calM, F_{\xi ,\delta_t}}(\sqrt{2\mu\epsilon_t}; \tilde{x}_{t-1}, \alpha_t)$, by \cref{lemma:stopping criteria:general proximal}, we have \[\tau_t\le T_{\calM,F_{\xi,\delta_t}}\left(\frac{L+\frac{\xi s_{\max}^2}{4\delta_t}}{\mu}\epsilon_t;\tilde x_{t-1}\right)\] 
 and, by \cref{lemma:Catalyst SVRG Lemma}, we have the following bounds
\begin{equation}\label{eq:Estimation in Proof}
\begin{aligned}
&\bbE\tau_0= \tilde O\left({\ell}+{m}+\sqrt{\frac{({m}+{\ell})L+
{m} L_0}{\mu}}\right)=\tilde O\left({\ell}+\sqrt{\frac{({\ell}+{m})L}{\mu}}+\sqrt{{({\ell}+{m}){m}}}\right)\\
&\bbE\tau_t= \tilde O\left({\ell}+{m}+\sqrt{\frac{({m}+{\ell})L+
{m} L_t}{\mu}}\right)\\
&= \tilde O\left({\ell}+\sqrt{\frac{({\ell}+{m})L}{\mu}}+\sqrt{\frac{({\ell}+{m}){m}\xi }{\mu\delta_t}}\right),\ t=1,...,T.
\end{aligned}    
\end{equation}
Since \(\delta_T\ge \frac{\epsilon}{4{m}\eta}\left(\log(\frac{2{m}\xi s_{\max}}{\mu\epsilon}) \right)^{-1}\),
\[\begin{aligned}
T&\le \frac{\log(\frac{4\eta {m}\xi }{\mu\epsilon}\log(\frac{2{m}\xi s_{\max}}{\mu\epsilon}))}{\log(\eta)} \cdot
\end{aligned}\]
Therefore, \(\bbE \sum_{t=1}^T\tau_t\) is bounded by 
\[\begin{aligned}
&\tilde O\left(T({\ell}+\sqrt{\frac{({\ell}+{m})L}{\mu}})+\sqrt{\frac{({\ell}+{m}){m}\xi }{\mu\delta_1}}\frac{\eta^{\frac T2}-1}{\sqrt\eta-1}\right)\\
&= \tilde O \left( {\ell}+\sqrt{\frac{({\ell}+{m})L}{\mu}} + \frac{\eta}{\sqrt{\eta}-1}{m}\sqrt{\frac{({\ell}+{m})\xi }{\mu\epsilon}}\right).
\end{aligned}\]
Then, we can take \(\eta\) to be a constant greater than 1, (or \(\eta=4\) which minimizes \(\frac{\eta}{\sqrt{\eta}-1}\)), combine it with the estimation of \(\tau_0\), and within \[
\tilde O\left({\ell}+\sqrt{\frac {({\ell}+{m})L} \mu}+{m}\sqrt{\frac{({\ell}+{m})\xi }{\mu\epsilon}}\right),
\]
expected number of incremental steps, we will reach  ${\tilde x_T}$ satisfying \(\|\tilde x_T-x^*\|_2\le \epsilon\).
\end{proof}

\begin{remark}\label{remark:epsilon_T}
It is possible to slightly improve the result of \cref{prop:Nested Complexity-SVRG Catalyst} by considering a  smaller value of the accuracy parameter $\epsilon_T$ for \emph{only} the last inner loop iteration. In particular, by the third claim in \cref{thm:estimation linear new}, since \(\delta_T\) satisfies \(2{m}\delta_T\log(\frac{\xi s_{\max}}{\mu\delta_T})\le\epsilon\), we will have \(\|x_{\xi ,\delta_T}^*-x^*\|_2\le \frac{2\epsilon}{\sqrt{m}{s_{\mathrm{pmin}}}}\). Since the dependence on \(\epsilon_T\) is logarithmic in the complexity result, if we  use
\[\epsilon_T'= \frac{ m\mu\delta_T^2}{2{s_{\mathrm{pmin}}}}\log^2\left(\frac{\xi s_{\max}}{\mu\delta_T}\right)\]
for the last inner loop, we obtain \(\|\tilde x_T-x^*\|_2\le\frac{4\epsilon}{\sqrt{m}{s_{\mathrm{pmin}}}}\) within similar bound of \(\tilde O\left({\ell}+\sqrt{\frac {({\ell}+{m})L} \mu}+{m}\sqrt{\frac{({\ell}+{m})\xi }{\mu\epsilon}}\right)\) incremental steps. Therefore, by using the accuracy $\epsilon_T'$ in the last inner loop iteration, we obtain \(\|\tilde x_T-x^*\|_2 \leq \min\{\epsilon, \frac{4\epsilon}{\sqrt{m}{s_{\mathrm{pmin}}}}\}\) within at most \(\tilde O\left({\ell}+\sqrt{\frac {({\ell}+{m})L} \mu}+{m}\sqrt{\frac{({\ell}+{m})\xi }{\mu\epsilon}}\right)\) expected number of incremental steps.
When \({s_{\mathrm{pmin}}} \ge \frac{4}{\sqrt{m}}\), the bound achieved with the new $\epsilon_T'$ value will be smaller. For example, if the rows of \(A\) are generated from i.i.d. sub-Gaussian distributions, the minimal nonzero singular value will be at the constant level, and this is expected to be the case.
\end{remark}

As a point of comparison, suppose that we instead use the accelerated proximal full-gradient method \cite{beck2009fast, nesterov2013gradient} in place of $\calM$ in \cref{algo_nested_linear}. Then, we have the following complexity result.

\begin{proposition}\label{prop:Nested Complexity-Accelerated Full Gradient}
Consider applying the nested penalty method (\cref{algo_nested_linear}), with penalty parameter \(\xi \ge \bar{\xi}\), and using the accelerated proximal full-gradient method \cite{beck2009fast, nesterov2013gradient} to solve the penalty reformulation subproblems. Let $\{\epsilon_t\}$ be an accuracy sequence for the subproblems satisfying
\begin{equation*}
0 < \epsilon_t \leq \min\left\{\frac12 m\mu\delta_t^2\log^2\left(\frac{\xi s_{\max}}{\mu\delta_t}\right), {m}\xi\delta_t\log\left(\frac{\xi s_{\max}}{\mu\delta_t}\right)\right\} \text{ for } t \geq 0,
\end{equation*}
and suppose that the stopping criterion  \eqref{eq:stopping_time_map} is used with step-size \(0 < \alpha_t \leq 1/(L+\frac{\xi s_{\max}^2}{4\delta_t})\), i.e., $\tau_t = \tilde{T}_{\calM, F_{\xi ,\delta_t}}(\sqrt{2\mu\epsilon_t}; \tilde{x}_{t-1}, \alpha_t)$ for all $t \geq 0$.
Then, for any desired accuracy \(\epsilon\in (0,\frac{{m}\xi }{\mu}\exp(-2)]\), the number of incremental steps required to satisfy 
$\|\tilde{x}_T - x^\ast\|_2 \leq \epsilon$
is upper bounded by
\[
\tilde O\left((\ell + m)\left(\sqrt{\frac L \mu} + \sqrt{\frac{{m}\xi s_{\max}^2}{\mu\epsilon}}\right)\right).
\]
\end{proposition}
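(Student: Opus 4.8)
The plan is to follow the template of the proof of \cref{prop:Nested Complexity-SVRG Catalyst}, replacing the catalyst-accelerated SVRG complexity with the deterministic complexity of the accelerated proximal full-gradient method and then re-summing the per-stage costs. First I would observe that the accuracy guarantee $\|\tilde{x}_T - x^\ast\|_2 \le \epsilon$ is obtained exactly as before, since it depends only on the choice of $\{\delta_t\}$ and $\{\epsilon_t\}$ and on \cref{lem:Nested Complexity-Generic}, not on the inner solver. Taking $T$ to be the smallest integer for which $2m\delta_T\log(\frac{\xi s_{\max}}{\mu\delta_T}) \le \epsilon$ holds gives, via the elementary bound $\frac1w > 2\log(\frac1w)$, both $\delta_T \ge \frac{\epsilon}{4m\eta}(\log(\frac{2m\xi s_{\max}}{\mu\epsilon}))^{-1}$ and $T = \tilde O(\log(1/\epsilon))$; the prescribed accuracy sequence $\{\epsilon_t\}$ then makes \cref{lem:Nested Complexity-Generic} applicable and delivers the desired accuracy. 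Only the count of incremental steps per stage differs from the SVRG case.

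Next I would bound the iteration count $\tau_t$ of the inner solver on the $t$-th subproblem. The smooth part $\tilde{F}_{\xi,\delta_t}$ is $\bar{L}_t$-smooth with $\bar{L}_t := L + \frac{\xi s_{\max}^2}{4\delta_t}$ (matching the step-size bound $\alpha_t \le 1/\bar{L}_t$ in the statement and following from property 4 of \cref{prop:softplus}: the Lipschitz constant of $p'_{\delta_t}$ is $\frac{1}{4\delta_t}$, so the penalty Hessian is $\preceq \frac{\xi}{4\delta_t}A^TA \preceq \frac{\xi s_{\max}^2}{4\delta_t}I$), while $F_{\xi,\delta_t}$ is $\mu$-strongly convex. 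The accelerated proximal full-gradient method \cite{beck2009fast, nesterov2013gradient} then reduces the optimality gap below any tolerance $\epsilon'$ in $O(\sqrt{\bar{L}_t/\mu}\,\log(1/\epsilon'))$ iterations. Converting the gradient-mapping stopping rule $\tau_t = \tilde T_{\calM,F_{\xi,\delta_t}}(\sqrt{2\mu\epsilon_t};\tilde x_{t-1},\alpha_t)$ into a function-gap tolerance through \cref{lemma:stopping criteria:general proximal}, with the resulting logarithmic factors absorbed into $\tilde O$, yields $\tau_t = \tilde O(\sqrt{\bar{L}_t/\mu})$.

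I would then multiply by the per-iteration cost. Unlike the finite-sum methods, each iteration of the full-gradient method evaluates the entire gradient of $\tilde{F}_{\xi,\delta_t}$, costing $\ell + m$ incremental steps (the $\ell$ component gradients $\nabla f_i$ and the $m$ penalty gradients) plus one proximal evaluation. Hence stage $t$ costs
\[
(\ell + m)\,\tau_t = \tilde O\!\left( (\ell + m)\sqrt{\tfrac{\bar{L}_t}{\mu}} \right) = \tilde O\!\left( (\ell + m)\Big( \sqrt{\tfrac{L}{\mu}} + \sqrt{\tfrac{\xi s_{\max}^2}{\mu\delta_t}} \Big) \right),
\]
using $\sqrt{a+b}\le \sqrt a + \sqrt b$. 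Summing over $t = 0,\dots,T$, the $\sqrt{L/\mu}$ contribution only picks up the factor $T = \tilde O(1)$, whereas the $\sqrt{\xi s_{\max}^2/(\mu\delta_t)}$ contribution, since $\delta_t = \delta_0/\eta^t$, is a geometric series in $\eta^{t/2}$ dominated by its last term. Substituting $1/\delta_T = \tilde O(m/\epsilon)$ turns that term into $\tilde O((\ell+m)\sqrt{m\xi s_{\max}^2/(\mu\epsilon)})$, and combining the two pieces gives the claimed bound.

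Because the accelerated full-gradient method is deterministic, no expectations are required and the bound holds pathwise, which is the one structural simplification relative to \cref{prop:Nested Complexity-SVRG Catalyst}. The only genuinely new bookkeeping is isolating the overall smoothness constant $\bar{L}_t$ (rather than the per-component constants used for SVRG) and carrying the $\ell + m$ factor through the geometric sum; I expect the sole mild subtlety to be checking that the $\tilde O(1)$ logarithmic factors contributed by the individual subproblem solves remain poly-logarithmic after summation, so that they are legitimately hidden inside $\tilde O$.
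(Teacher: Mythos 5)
Your proposal is correct and follows essentially the same route as the paper's proof: keep the $\{\delta_t\}$ schedule and accuracy argument from \cref{prop:Nested Complexity-SVRG Catalyst}, replace the per-stage iteration estimate by the accelerated proximal full-gradient bound $\tau_t = \tilde O(\sqrt{(L + \xi s_{\max}^2/(4\delta_t))/\mu})$, multiply by the $\ell+m$ incremental steps per full-gradient evaluation, and sum the resulting geometric series. The only cosmetic difference is that you spell out the $\sqrt{a+b}\le\sqrt a+\sqrt b$ split and the last-term domination explicitly, which the paper leaves implicit by deferring to the earlier proof.
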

\begin{proof}
The sequence of \(\delta_t\) is the same as in the proof of \cref{prop:Nested Complexity-SVRG Catalyst}. We only need to modify the estimated number of iterations \eqref{eq:Estimation in Proof}. Because \(\tilde F_{\xi,\delta_t}\) is \(\left(L+\frac{\xi s_{\max}^2}{4\delta_t}\right)\)-smooth, by Theorem 6 in \cite{nesterov2013gradient},
the number of iterations will be 
\begin{equation}\label{eq:Estimation in Proof 2}
\begin{aligned}
&\tau_0=\tilde O\left(\sqrt{\frac{L}{\mu}}+s_{\max}\right).\\
&\tau_t=\tilde O\left(\sqrt{\frac{L}{\mu}}+\sqrt{\frac{\xi s_{\max}^2 }{\mu\delta_t}}\right),\ t=1,...,T.
\end{aligned}    
\end{equation}
Similar to the proof of \cref{prop:Nested Complexity-SVRG Catalyst}, we get a solution \(\tilde x_T\) satisfying
\(\|\tilde x_T-x^*\|_2\le\epsilon\) within \(\tilde O\left(\sqrt{\frac L \mu}+\sqrt{\frac{{m}\xi s_{\max}^2}{\mu\epsilon}}\right)\) iterations, i.e., \(\tilde O\left((\ell+m)\left(\sqrt{\frac L \mu}+\sqrt{\frac{{m}\xi s_{\max}^2}{\mu\epsilon}}\right)\right)\) incremental steps.
\end{proof}

\begin{remark}\label{remark:epsilon_T_2}
Analogous to \cref{remark:epsilon_T}, for the accelerated full-gradient method, by using a slightly smaller accuracy parameter in the final inner loop iteration, we can also achieve the bound \(\|\tilde x_T-x^*\|_2\le\frac{4\epsilon}{\sqrt{m}{s_{\mathrm{pmin}}}}\) within \(\tilde O\left((\ell+m)\left(\sqrt{\frac L \mu}+\sqrt{\frac{{m}\xi s_{\max}^2}{\mu\epsilon}}\right)\right)\) expected incremental steps.
\end{remark}

The stochastic method based on the catalyst acceleration requires at least \\
\(O\left( \min\left( \sqrt{\frac{L}{\mu}}, \sqrt{{\ell}+{m}}, \sqrt{\frac{{\ell}+{m}}{m}}s_{\max} \right) \right)\) fewer evaluations on the components of the objective and the constraints and their gradients, 
compared to the accelerated full-gradient method of \cref{prop:Nested Complexity-Accelerated Full Gradient}, which indicates the advantage of application stochastic methods.

As another point of comparison, we can also obtain a complexity result for a static version of \cref{algo_nested_linear} that uses a single value of \(\delta_{\epsilon}\), which is determined based on the required accuracy \(\epsilon\).
The following corollary follows immediately from \cref{lem:Nested Complexity-Generic} and \cref{prop:Nested Complexity-SVRG Catalyst}.

\begin{corollary}\label{cor:Static Complexity-SVRG Catalyst}
For any desired accuracy \(\epsilon\in (0,\frac{{m}\xi }{\mu}\exp(-2)]\) and penalty parameter \(\xi \ge \bar{\xi}\), consider applying proximal SVRG with catalyst acceleration \cite{lin2015universal, lin2018catalyst} to solve a penalty reformulation subproblem \eqref{eq:Penalty Linear New} with penalty parameter
\[\delta_{\epsilon}=\frac{\epsilon}{4{m}}\left(\log\left(\frac{2{m}\xi s_{\max}}{\mu\epsilon}\right)\right)^{-1}.\]
Then, the expected number of incremental steps required to satisfy $\|\tilde{x} - x^\ast\|_2 \leq \min\left\{\epsilon,\frac{4\epsilon}{\sqrt{m}{s_{\mathrm{pmin}}}}\right\}$ for some $\tilde{x}$ is upper bounded by
\begin{equation} \label{eq:complexity}
\tilde O\left({\ell}+\sqrt{\frac {({\ell}+{m})L} \mu}+{m}\sqrt{\frac{({\ell}+{m})\xi }{\mu\epsilon}}\right).
\end{equation}
\end{corollary}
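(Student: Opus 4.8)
The plan is to treat \cref{cor:Static Complexity-SVRG Catalyst} as the single-stage ($T=0$) specialization of the nested scheme, so that the analysis of \cref{prop:Nested Complexity-SVRG Catalyst} carries over almost verbatim with $\delta_0$ held fixed at $\delta_\epsilon$. First I would check that the prescribed $\delta_\epsilon = \frac{\epsilon}{4m}\bigl(\log(\frac{2m\xi s_{\max}}{\mu\epsilon})\bigr)^{-1}$ lies in the admissible ranges of \cref{thm:estimation linear new}, namely $\delta_\epsilon \le \frac{\xi}{\mu}\exp(-2)$ and $\delta_\epsilon \le \frac{s_{\max}\max(s_{\mathrm{pmin}},1)\xi}{\mu}\exp(-1)$; both follow from the assumed regime $\epsilon \in (0,\frac{m\xi}{\mu}\exp(-2)]$ together with $s_{\max}\ge 1$ (since $\|a_i\|_2=1$). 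The scalar inequality already isolated inside the proof of \cref{prop:Nested Complexity-SVRG Catalyst}, obtained from $\frac1w > 2\log(\frac1w)$ with $w = \frac{\mu\epsilon}{2m\xi s_{\max}}$, then guarantees that this choice yields $2m\delta_\epsilon\log\bigl(\frac{\xi s_{\max}}{\mu\delta_\epsilon}\bigr) \le \epsilon$.

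Next I would convert this into the distance bound. Because $\xi \ge \bar{\xi}$, \cref{lemma:0 point} gives $x_{\xi,0}^*=x^*$, so Claims 2 and 3 of \cref{thm:estimation linear new} bound $\|x_{\xi,\delta_\epsilon}^*-x^*\|_2$ directly. From $m\delta_\epsilon\log\bigl(\frac{\xi s_{\max}}{\mu\delta_\epsilon}\bigr)\le \epsilon/2$, Claim 2 gives $\|x_{\xi,\delta_\epsilon}^*-x^*\|_2\le \epsilon/2$, while the same inequality with Claim 3 gives $\frac{4\sqrt m}{s_{\mathrm{pmin}}}\delta_\epsilon\log\bigl(\frac{\xi s_{\max}}{\mu\delta_\epsilon}\bigr)\le \frac{2\epsilon}{\sqrt m\, s_{\mathrm{pmin}}}$; combining them yields $\|x_{\xi,\delta_\epsilon}^*-x^*\|_2 \le \tfrac12\min\{\epsilon,\frac{4\epsilon}{\sqrt m\, s_{\mathrm{pmin}}}\}$. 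It then suffices to solve the single subproblem $\min_x F_{\xi,\delta_\epsilon}(x)$ to accuracy $\epsilon'$ small enough that, via part (2.)\ of \cref{lemma:stopping criteria:general proximal} and $\mu$-strong convexity, the optimization error $\sqrt{2\epsilon'/\mu}$ is also at most $\tfrac12\min\{\epsilon,\frac{4\epsilon}{\sqrt m\, s_{\mathrm{pmin}}}\}$, after which the triangle inequality delivers $\|\tilde x-x^*\|_2 \le \min\{\epsilon,\frac{4\epsilon}{\sqrt m\, s_{\mathrm{pmin}}}\}$. The required $\epsilon'$ is polynomial in $\epsilon$, $1/m$, and $s_{\mathrm{pmin}}$, exactly the tightened-accuracy device used in \cref{remark:epsilon_T}.

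For the complexity I would invoke the catalyst-accelerated proximal SVRG estimate (\cref{lemma:Catalyst SVRG Lemma}) on $F_{\xi,\delta_\epsilon}$, viewed as in the proof of \cref{prop:Nested Complexity-SVRG Catalyst} as a sum of $\ell+m$ components: the $\ell$ objective pieces are $\frac{\ell+m}{\ell}L$-smooth and the $m$ penalty pieces are $L_\epsilon$-smooth with $L_\epsilon=\frac{(\ell+m)\xi}{4\delta_\epsilon}$. Substituting $\delta_\epsilon = \tilde O(\frac{\epsilon}{m})$ gives $L_\epsilon = \tilde O\bigl(\frac{(\ell+m)m\xi}{\epsilon}\bigr)$, so $\sqrt{mL_\epsilon/\mu}=\tilde O\bigl(m\sqrt{(\ell+m)\xi/(\mu\epsilon)}\bigr)$, and the catalyst bound $\tilde O\bigl(\ell+m+\sqrt{((\ell+m)L+mL_\epsilon)/\mu}\bigr)$ collapses to \eqref{eq:complexity}, with the standalone $m$ absorbed into the last term since $\frac{(\ell+m)\xi}{\mu\epsilon}\ge \exp(2)>1$ in the stated regime; the tightened $\epsilon'$ affects the count only through the poly-logarithmic factors hidden by $\tilde O$. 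The main thing to watch is the second paragraph: one must confirm that the log-arguments in Claims 2 and 3 ($\frac{\xi}{\mu\delta}$ and $\frac{s_{\max}\max(s_{\mathrm{pmin}},1)\xi}{\mu\delta}$) are each dominated by the uniform argument $\frac{\xi s_{\max}}{\mu\delta_\epsilon}$ of the statement, which holds up to constants given $s_{\max}\ge 1$ and $s_{\mathrm{pmin}}\le s_{\max}$, any residual slack being harmless since it is logarithmic.
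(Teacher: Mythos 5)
Your proposal is correct and follows essentially the same route as the paper: the paper gives no separate proof, stating only that the corollary "follows immediately" from \cref{lem:Nested Complexity-Generic} and \cref{prop:Nested Complexity-SVRG Catalyst}, and what you write out is exactly that single-stage specialization — the range checks and the scalar inequality yielding $2m\delta_\epsilon\log(\xi s_{\max}/(\mu\delta_\epsilon))\le\epsilon$, Claims 2 and 3 of \cref{thm:estimation linear new} for the distance bound, the tightened-accuracy device of \cref{remark:epsilon_T} for the $\frac{4\epsilon}{\sqrt{m}s_{\mathrm{pmin}}}$ part of the minimum, and \cref{lemma:Catalyst SVRG Lemma} with $L_\epsilon=\frac{(\ell+m)\xi}{4\delta_\epsilon}$ for the complexity. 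Your handling of the $\max(s_{\mathrm{pmin}},1)$ slack inside the logarithm matches the paper's own (slightly loose, logarithmically harmless) treatment in \cref{remark:epsilon_T}, so no gap remains.
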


One can also obtain a result similar to \cref{prop:Nested Complexity-Accelerated Full Gradient} for a static version that uses the accelerated proximal gradient method. It is instructive to mention that the dynamic nested method, as presented in \cref{algo_nested_linear}, has several advantages over the static strategy of \cref{cor:Static Complexity-SVRG Catalyst}. First, in \cref{cor:Static Complexity-SVRG Catalyst}, $\delta$ depends directly on problem parameters and the desired accuracy $\epsilon > 0$, and thus \cref{cor:Static Complexity-SVRG Catalyst} only provides a solution up to but no better than an error of magnitude $\epsilon$. On the other hand, \cref{algo_nested_linear} and \cref{prop:Nested Complexity-SVRG Catalyst} provide a dynamic strategy for shrinking $\delta$ and thereby guarantee that the sequence $\{\tilde{x}_t\}$ eventually converges to $x^\ast$. Second, the nested structure has the advantage that it enables the use of the screening procedure that we develop in \cref{sec:screening} and performs better in practice as we observe in \cref{sec:numerical}. Finally, our proofs reveal that the complexity bound in \cref{prop:Nested Complexity-SVRG Catalyst} is indeed better than that of \cref{cor:Static Complexity-SVRG Catalyst} by a factor of \(O(\log\frac1{\epsilon})\), which is hidden behind the \(\tilde O\) notation. In practice, this improvement is more obvious, because the nested algorithm can address problems with relatively small condition number (due to larger \(\delta\)) compared with setting \(\delta=\tilde O(\frac{\epsilon}{m})\) instead at the beginning. 

\begin{remark}\label{remark:lower complexity bound}
When \(\epsilon>0\) is small enough, the third term in \eqref{eq:tilde O complexity} will be dominant and superior than the complexity for the deterministic method.
Ouyang and Xu \cite{ouyang2021lower} establish
lower bounds on the complexity for first-order methods for the affinely constrained convex problem:
\begin{equation}\label{eq:Ouyang Affine Constraints Problem}
\begin{aligned}
    \min_{x\in\mathbb{R}^n}\quad &f(x)\\
    \mathrm{s.t.}\quad &a_i^Tx= b_i, \  i=1,...,m.
\end{aligned}
\end{equation}
Theorem 8 in \cite{ouyang2021lower} states, for \(t=O(m)\), there are instances of \(f,a_i,b_i\) in \eqref{eq:Ouyang Affine Constraints Problem}, such that the iterate obtained from a deterministic first-order method, \(\overline{x}_t\), will have an error \(\|\overline{x}_t-x^*\|_2\ge \Omega(\frac{\|A\|_2\|\lambda^*\|_2}{\mu t})\). This result indicates a lower bound \(\Omega(\min(m^2,m\frac{\|A\|_2\|\lambda^*\|_2}{\mu\epsilon}))\) of calls on the \((a_i^Tx-b_i)\) and their gradients to have a 2-norm error \(\epsilon\) of the solution for a deterministic first-order algorithm. Compared with this result, notice for any problem with affine constraints \(Ax=b\), one can scale the rows of \(A\) and the entries of \(b\) to make the norms of rows equal to \(1\); the entries of the dual solution will be scaled respectively. Then, the third term of \eqref{eq:complexity} (for \({\ell}=O(m)\)) becomes \(\tilde O({m}\sqrt{\frac{{m}\|\lambda^*\|_{\infty}}{\mu\epsilon}})\), when \(\xi =\|\lambda^*\|_{\infty}\), which is better than the deterministic methods when \(\epsilon\le O(\frac{\|A\|_2^2\|\lambda^*\|_2^2}{{m}\mu\|\lambda^*\|_{\infty}})\), considering \(\|A\|_2\ge1\) and \(\|\lambda^*\|_2\ge\|\lambda^*\|_{\infty}\).
\end{remark}

\section{Convergence of the Duality Gap}\label{sec:duality}
In this section, we first show that given an approximate solution to the penalty reformulation problem \eqref{eq:Penalty Linear New}, we can efficiently obtain an approximate solution to the dual problem \eqref{eq:Original Dual Linear}. Then, we extend the results developed in \cref{sec:alg} to derive the complexity of obtaining both primal and dual solutions using stochastic methods, again using the proximal SVRG with catalyst acceleration as our exemplary method.

Toward establishing a result on the duality gap for \eqref{eq:Original Linear} and \eqref{eq:Original Dual Linear}, for fixed values of the parameters $\xi, \delta > 0$, we define the following Lagrangian functions:
\begin{equation}\label{eq:Linear Lagrangians}
\begin{aligned}
    &L(x,\lambda) := F(x)+\lambda^T(Ax-b),\\
    &L_{\xi ,\delta}(x,\lambda) := F(x) + \lambda^T (Ax-b) - \delta\pi_{\xi}(\lambda), 
\end{aligned}
\end{equation}
for any $x \in \bbR^n$ and $\lambda \in [0, \xi]^m$ and where \[\pi_{\xi }(\lambda) := \sum_{i=1}^{m}\lambda_i\log \lambda_i+\sum_{i=1}^{m}(\xi -\lambda_i)\log (\xi -\lambda_i)-{m}\xi \log(\xi ).\] 
Note that, when \(\lambda \in[0,\xi ]^m\), we have 
\(\pi_{\xi }(\lambda)\in [-{m}\xi \log2,0]\) and thus $L_{\xi ,\delta}(x,\lambda)-L(x,\lambda)\in[0,{m}\xi \delta\log2]$.
Following \cref{lemma:0 point}, if the penalty parameter $\xi$ is large enough, i.e., \(\xi \geq \bar{\xi}\) then there exists a dual optimal solution \(\lambda^*\) such that \(\xi \ge \bar{\xi} \ge \|\lambda^*\|_{\infty}\) and the assumption that $\lambda \in [0, \xi]^m$ is without loss of generality. Now, we define the following functions:
\begin{equation}\label{eq:Linear Dual Function}
\begin{aligned}
G(\lambda) := & \min_x L(x,\lambda) = -F^*(-A^T\lambda)-b^T\lambda,\\
G_{\xi ,\delta}(\lambda) := & \min_x L_{\xi ,\delta} (x,\lambda)=G(\lambda) - \delta\pi_{\xi }(\lambda), \\
\Phi(x): = & \max_{\lambda\in[0,\xi ]^{m}} L(x,\lambda), \\
\Phi_{\xi ,\delta}(x) := & \max_{\lambda\in[0,\xi ]^{m}} L_{\xi ,\delta}(x,\lambda),
\end{aligned}    
\end{equation}
for any $x \in \bbR^n$ and $\lambda \in [0, \xi]^m$. Notice also that, for any $x \in \bbR^n$, we have that 
\begin{equation}\label{eq:Dual Variable}
\begin{aligned}
&\arg\max_{\lambda\in[0,\xi ]^{m}} L_{\xi ,\delta}(x,\lambda) =\\ &\xi \left (\frac{\exp((a_1^Tx-b_1)/\delta)}{1+\exp((a_1^Tx-b_1)/\delta)},...,\frac{\exp((a_{m}^Tx-b_{m})/\delta)}{1+\exp((a_{m}^Tx-b_{m})/\delta)} \right )^T,
\end{aligned}    
\end{equation}
and hence 
\[\Phi_{\xi ,\delta}(x)=\max_{\in[0,\xi ]^{m}} L_{\xi ,\delta}(x,\lambda)=F_{\xi ,\delta}(x),\]
for all $x \in \bbR^n$, which follows from \eqref{eq:softplus_nesterov}.
In other words, our penalty reformulation with the softplus penalty can be considered as a version of Nesterov's smoothing technique inside the Lagrangian function. With this observation, we have the following proposition, which demonstrates how to construct an approximately optimal dual solution from an approximately optimal primal solution. Recall that $x_{\xi ,\delta}^\ast$ denotes  the unique optimal solution of \eqref{eq:Penalty Linear New} for given $\xi  \geq 0$ and $\delta \geq 0$ and $\Lambda^\ast$ denotes the set of dual optimal solutions of \eqref{eq:Original Dual Linear}.

\begin{proposition}\label{prop:Saddle Pair Linear}
Let $\epsilon > 0$ be given, and suppose that \(\xi \ge \bar{\xi} = \inf_{\lambda^\ast \in \Lambda^\ast}\|\lambda^\ast\|_\infty\) and \(0 < \delta \le \frac\epsilon {2{m}\log2}\). For a given $\hat{x}_{\xi ,\delta}$, define $\hat{\lambda}_{\xi ,\delta}$ by
\[\hat{\lambda}_{\xi ,\delta}=\xi \left (\frac{\exp((a_1^T\hat{x}_{\xi ,\delta}-b_1)/\delta)}{1+\exp((a_1^T\hat{x}_{\xi ,\delta}-b_1)/\delta)},...,\frac{\exp((a_{m}^T\hat{x}_{\xi ,\delta}-b_{m})/\delta)}{1+\exp((a_{m}^T\hat{x}_{\xi ,\delta}-b_{m})/\delta)} \right )^T.\]
If \(\hat{x}_{\xi ,\delta}\) satisfies \(\|\hat{x}_{\xi ,\delta}-x_{\xi ,\delta}^*\|_2\le \sqrt{\frac{\delta\epsilon}{m}}\min(1,\frac{4\mu\delta}{m})\), then it holds $G(\lambda^*)-G(\hat{\lambda}_{\xi ,\delta})\leq \xi \epsilon$. In addition, if \(F_{\xi ,\delta}(\hat{x}_{\xi ,\delta})-F_{\xi ,\delta}(x_{\xi ,\delta}^*)\le \xi \epsilon\), then it holds $G(\lambda^*)-G(\hat{\lambda}_{\xi ,\delta})\le F_{\xi ,0}(\hat{x}_{\xi ,\delta}) - G(\hat{\lambda}_{\xi ,\delta})\le 2\xi \epsilon$.
\end{proposition}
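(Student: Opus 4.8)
The plan is to reduce both assertions to a single bound on the \emph{smoothed primal--dual gap}
\[
\Delta \defeq F_{\xi,\delta}(\hat{x}_{\xi,\delta}) - G_{\xi,\delta}(\hat{\lambda}_{\xi,\delta}) \ge 0,
\]
and then to convert the two hypotheses (a distance bound in the first assertion, a function-value bound in the second) into bounds on $\Delta$. First I would record the elementary ``sandwich'' relations. Since $\xi \ge \bar{\xi}$, \cref{lemma:0 point} gives $G(\lambda^*) = \min_x F_{\xi,0}(x)$, so $G(\lambda^*) \le F_{\xi,0}(\hat{x}_{\xi,\delta}) \le F_{\xi,\delta}(\hat{x}_{\xi,\delta})$, where the last inequality is item~5 of \cref{prop:softplus}; this already yields the first inequality $G(\lambda^*)-G(\hat{\lambda}_{\xi,\delta}) \le F_{\xi,0}(\hat{x}_{\xi,\delta})-G(\hat{\lambda}_{\xi,\delta})$ of the second assertion. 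On the dual side, $G(\hat{\lambda}_{\xi,\delta}) = G_{\xi,\delta}(\hat{\lambda}_{\xi,\delta}) + \delta\pi_\xi(\hat{\lambda}_{\xi,\delta}) \ge G_{\xi,\delta}(\hat{\lambda}_{\xi,\delta}) - m\xi\delta\log 2$ because $\pi_\xi \ge -m\xi\log 2$ on $[0,\xi]^m$. Combining, both $G(\lambda^*)-G(\hat{\lambda}_{\xi,\delta})$ and $F_{\xi,0}(\hat{x}_{\xi,\delta})-G(\hat{\lambda}_{\xi,\delta})$ are at most $\Delta + m\xi\delta\log 2$, and the hypothesis $\delta \le \epsilon/(2m\log 2)$ makes $m\xi\delta\log 2 \le \tfrac12\xi\epsilon$. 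Everything thus reduces to bounding $\Delta$.

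The central estimate is $\Delta \le \tfrac{1}{2\mu}\|g\|_2^2$ for the minimal-norm (sub)gradient $g \in \partial F_{\xi,\delta}(\hat{x}_{\xi,\delta})$. Indeed, since $\hat{\lambda}_{\xi,\delta}$ is exactly the inner maximizer \eqref{eq:Dual Variable}, we have $L_{\xi,\delta}(\hat{x}_{\xi,\delta},\hat{\lambda}_{\xi,\delta}) = F_{\xi,\delta}(\hat{x}_{\xi,\delta})$ and $\partial_x L_{\xi,\delta}(\hat{x}_{\xi,\delta},\hat{\lambda}_{\xi,\delta}) = \partial F(\hat{x}_{\xi,\delta}) + A^T\hat{\lambda}_{\xi,\delta} = \partial F_{\xi,\delta}(\hat{x}_{\xi,\delta})$, while $G_{\xi,\delta}(\hat{\lambda}_{\xi,\delta}) = \min_x L_{\xi,\delta}(x,\hat{\lambda}_{\xi,\delta})$ and $x \mapsto L_{\xi,\delta}(x,\hat{\lambda}_{\xi,\delta})$ is $\mu$-strongly convex; hence $\Delta$ is exactly the suboptimality of $\hat{x}_{\xi,\delta}$ for this strongly convex function, and the inequality $q(x)-\min q \le \tfrac{1}{2\mu}\|g\|_2^2$ applies.

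For the first assertion I would then bound $g$ by the distance. The differentiable part of $F_{\xi,\delta}$ is $\tilde{L}$-smooth with $\tilde{L} \le L + \tfrac{\xi s_{\max}^2}{4\delta} \le L + \tfrac{\xi m}{4\delta}$ (item~4 of \cref{prop:softplus} and $s_{\max}^2 \le m$), and $0 \in \partial F_{\xi,\delta}(x_{\xi,\delta}^*)$, so $\|g\|_2 \le c\,\tilde{L}\,\|\hat{x}_{\xi,\delta}-x_{\xi,\delta}^*\|_2$ for an absolute constant $c$ (immediate from $\tilde{L}$-smoothness when $\psi$ is differentiable, and via nonexpansiveness of $\mathrm{prox}_\psi$, i.e.\ the gradient-mapping estimates behind \cref{lemma:stopping criteria:general proximal}, in general). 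Consequently $\Delta \le \tfrac{c^2\tilde{L}^2}{2\mu}\|\hat{x}_{\xi,\delta}-x_{\xi,\delta}^*\|_2^2$, and substituting the hypothesis $\|\hat{x}_{\xi,\delta}-x_{\xi,\delta}^*\|_2 \le \sqrt{\delta\epsilon/m}\,\min(1,\tfrac{4\mu\delta}{m})$ shows that it is calibrated exactly so that $\Delta \le \tfrac12\xi\epsilon$, the factor $\min(1,\tfrac{4\mu\delta}{m})$ absorbing the $\delta^{-1}$ growth of $\tilde{L}$; with the sandwich this gives $G(\lambda^*)-G(\hat{\lambda}_{\xi,\delta}) \le \xi\epsilon$.

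The second assertion is where I expect the real difficulty. After the sandwich it remains to show $\Delta \le \tfrac32\xi\epsilon$ from the function-value hypothesis $P \defeq F_{\xi,\delta}(\hat{x}_{\xi,\delta}) - F_{\xi,\delta}^* \le \xi\epsilon$. Splitting $\Delta = P + D$, where $D \defeq F_{\xi,\delta}^* - G_{\xi,\delta}(\hat{\lambda}_{\xi,\delta}) \ge 0$ is the smoothed \emph{dual} suboptimality, $P$ is controlled by hypothesis but $D$ must be bounded separately, and the generic chain $\Delta \le \tfrac{1}{2\mu}\|g\|_2^2 \le \tfrac{\tilde{L}}{\mu}P$ is useless because $\tilde{L}/\mu$ grows like $1/\delta$. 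The tool that avoids this blow-up is the Bregman structure of the entropic term: writing $\pi_\xi(\lambda) = \xi\sum_i b(\lambda_i/\xi)$ for the binary entropy $b$ and using Fenchel duality between $b$ and the softplus, $D$ becomes a sum of Kullback--Leibler divergences between the sigmoids defining $\hat{\lambda}_{\xi,\delta}$ and $\lambda_{\xi,\delta}^*$, each controlled through the $\tfrac14$-smoothness of the softplus (item~4 of \cref{prop:softplus}) rather than the singular curvature of $\pi_\xi$ at the boundary; together with the elementary hinge discrepancy $\xi\sum_i\big[(a_i^T\hat{x}_{\xi,\delta}-b_i)^+ - \hat{\lambda}_{\xi,\delta,i}(a_i^T\hat{x}_{\xi,\delta}-b_i)\big] \le m\xi\delta/e$ this is what I would push through toward the $2\xi\epsilon$ bound. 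Verifying that the accumulated constants actually close at the prescribed $\delta$, and in particular that the function-value accuracy suffices despite the $O(\delta)$ strong concavity of the smoothed dual, is the step I would treat most carefully, and is the main obstacle.
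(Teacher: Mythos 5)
Your reduction to the smoothed primal--dual gap $\Delta = F_{\xi,\delta}(\hat{x}_{\xi,\delta})-G_{\xi,\delta}(\hat{\lambda}_{\xi,\delta})$ is a genuinely different organization from the paper's, and the sandwich step is fine, but the central estimate does not close under the stated hypotheses. Your chain gives $\Delta \le \tfrac{1}{2\mu}\|g\|_2^2 \le \tfrac{c^2\tilde L^2}{2\mu}\|\hat{x}_{\xi,\delta}-x_{\xi,\delta}^*\|_2^2$ with $\tilde L \approx \xi m/(4\delta)$; substituting $\|\hat{x}_{\xi,\delta}-x_{\xi,\delta}^*\|_2^2 \le \tfrac{\delta\epsilon}{m}\cdot\tfrac{16\mu^2\delta^2}{m^2}$ yields $\Delta \lesssim \tfrac{\xi^2\mu\delta\epsilon}{2m}$, which is $\le\tfrac12\xi\epsilon$ only if $\xi\mu\delta\le m$ --- a condition not implied by $\delta\le\epsilon/(2m\log2)$ (take $\xi$ or $\mu$ large). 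The tolerance in the hypothesis, including the factor $\min(1,\tfrac{4\mu\delta}{m})$, is calibrated for a different chain: the paper works entirely on the dual side, bounding $\|\hat\lambda_{\xi,\delta}-\lambda^*_{\xi,\delta}\|_2$ by $\tfrac{\sqrt m}{4\delta}\|\hat{x}_{\xi,\delta}-x_{\xi,\delta}^*\|_2$ via the $\tfrac1{4\delta}$-Lipschitz sigmoid, then $\|\nabla G_{\xi,\delta}(\hat\lambda_{\xi,\delta})\|_2$ via the $1/\mu$-smoothness of $F^*$, and finally invoking the $\tfrac{4\delta}{\xi}$-strong concavity of $G_{\xi,\delta}$, which contributes a factor $\tfrac{\xi}{8\delta}$ in place of your $\tfrac{1}{2\mu}$; with those exponents each branch of the $\min$ produces exactly $\xi\epsilon/4$. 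A secondary problem is that $\|g\|_2\le c\tilde L\|\hat{x}_{\xi,\delta}-x_{\xi,\delta}^*\|_2$ for the minimal-norm subgradient is false for nonsmooth $\psi$ (e.g.\ $\psi=\|\cdot\|_1$: the subdifferential jumps near the minimizer), and the prox-based repair evaluates the gradient mapping at a point different from the one at which you bound $\Delta$; the dual route avoids primal subgradients altogether.

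For the second assertion you have correctly sensed a difficulty but then set out to solve a harder problem than the statement requires. Read ``in addition'' as the paper does: both hypotheses hold simultaneously, so the dual estimate $G_{\xi,\delta}(\lambda^*_{\xi,\delta})-G_{\xi,\delta}(\hat\lambda_{\xi,\delta})\le\xi\epsilon/2$ established in the first part is available, and the second conclusion is a short chain,
\[
G(\lambda^*)=F_{\xi,0}(x^*)\le F_{\xi,0}(\hat{x}_{\xi,\delta})\le F_{\xi,\delta}(\hat{x}_{\xi,\delta})\le G_{\xi,\delta}(\lambda^*_{\xi,\delta})+\xi\epsilon\le G_{\xi,\delta}(\hat\lambda_{\xi,\delta})+\tfrac32\xi\epsilon\le G(\hat\lambda_{\xi,\delta})+2\xi\epsilon,
\]
using strong duality for the smoothed problem ($F_{\xi,\delta}^*=G_{\xi,\delta}(\lambda^*_{\xi,\delta})$) and $G_{\xi,\delta}-G\le m\xi\delta\log2\le\xi\epsilon/2$. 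Your plan to extract the dual suboptimality from the function-value hypothesis alone via KL/Bregman estimates is therefore unnecessary and, as you yourself anticipated, would run into the $O(\delta)$ strong-concavity obstruction.
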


\begin{proof}
Letting \(\lambda_{\xi ,\delta}^*=\arg\max_{\lambda\in[0,\xi ]^{m}} G_{\xi ,\delta}(\lambda)\) and \(\lambda^*=\arg\max_{\lambda\in[0,\xi ]^{m}} G(\lambda)\), we see that \((x^*,\lambda^*)\) and \((x_{\delta}^*,\lambda_{\delta}^*)\) 
are the saddle pairs of \(L(x,\lambda)\) and \(L_{\xi ,\delta}(x,\lambda)\), respectively. 
Then, \[G(\lambda^*)\le G_{\xi ,\delta}(\lambda^*)\le G_{\xi ,\delta}(\lambda_{\xi ,\delta}^*)\le G(\lambda_{\xi ,\delta}^*)+{m}\xi \delta\log2\le G(\lambda^*)+{m}\xi \delta\log2.\]
Hence, when \(\delta \le \frac\epsilon {2{m}\log2} \), we have  \(G(\lambda^*)-G(\lambda_{\xi ,\delta}^*)\le \frac{\xi \epsilon}{2}\).

Suppose we obtain an approximate solution \(\hat{x}_{\xi ,\delta}\), and the corresponding \(\hat{\lambda}_{\xi ,\delta}=\arg\max_{\lambda\in[0,\xi ]^{m}} L_{\xi ,\delta}(\hat{x}_{\xi ,\delta},\lambda)\). Then, because \(p''_{\delta}\in[0,\frac1{4\delta}]\), we have 
\[\|\hat{\lambda}_{\xi ,\delta}-\lambda_{\xi ,\delta}^*\|_2\le \frac{\sqrt{m}}{4\delta}\|\hat{x}_{\xi ,\delta}-x_{\xi ,\delta}^*\|_2.\]
Because \( F(x)\) is \(\mu\)-strongly convex, \(F^* (y)\) is \(\frac1{\mu}\)-smooth. Since \(\|A\|_2\le \sqrt{m}\), 
\[\|\nabla G(\hat{\lambda}_{\xi ,\delta})-\nabla G(\lambda_{\xi ,\delta}^*)\|_2 \le \frac{m}{\mu}\|\hat{\lambda}_{\xi ,\delta}-\lambda_{\xi ,\delta}^*\|_2.\]
And then, 
\[\begin{aligned}
&\|\nabla G_{\xi ,\delta}(\hat{\lambda}_{\xi ,\delta})\|_2=\|\nabla G_{\xi ,\delta}(\hat{\lambda}_{\xi ,\delta})-\nabla G_{\xi ,\delta}(\lambda_{\xi ,\delta}^*)\|_2\\
\le\ & \|\nabla G(\hat{\lambda}_{\xi ,\delta})-\nabla G(\lambda_{\xi ,\delta}^*)\|_2+\delta\|\nabla \pi_{\xi }(\hat{\lambda}_{\xi ,\delta}) -\nabla \pi_{\xi }(\lambda_{\xi ,\delta}^*) \|_2\\
\le\ & \frac{m}{\mu}\|\hat{\lambda}_{\xi ,\delta}-\lambda_{\xi ,\delta}^*\|_2 + \|A(\hat{x}_{\xi ,\delta}-x_{\xi ,\delta}^*)\|_2\\
\le\ & \left (\frac{{m}\sqrt{m}}{4\mu\delta} + \sqrt{m} \right )\|\hat{x}_{\xi ,\delta}-x_{\xi ,\delta}^*\|_2.
\end{aligned}\]
Since \(G(\lambda)\) is concave, \(G_{\xi ,\delta}(\lambda)\) is \(\frac{4\delta}{\xi }\)-strongly concave. Therefore,
\[G_{\xi ,\delta}(\lambda_{\xi ,\delta}^*)-G_{\xi ,\delta}(\hat{\lambda}_{\xi ,\delta}) \le \frac{\xi }{8\delta }\|\nabla G_{\xi ,\delta}(\hat{\lambda}_{\xi ,\delta})\|_2^2,\]
which indicates that when \(\|\hat{x}_{\xi ,\delta}-x_{\xi ,\delta}^*\|_2\le \sqrt{\frac{\delta\epsilon}{m}}\min(1,\frac{4\mu\delta}{m})\),
\[G_{\xi ,\delta}(\lambda_{\xi ,\delta}^*)-G_{\xi ,\delta}(\hat{\lambda}_{\xi ,\delta})\le \frac{\xi \epsilon}2,\]
and then, when additionally \(\delta \le \frac\epsilon {2{m}\log2} \),
\[\begin{aligned}
G(\lambda^*) &\ge G(\hat{\lambda}_{\xi ,\delta}) \ge G_{\xi ,\delta}(\hat{\lambda}_{\xi ,\delta})- \frac{\xi \epsilon}2 \\&\ge G_{\xi ,\delta}(\lambda_{\xi ,\delta}^*) - \xi \epsilon \ge G_{\xi ,\delta}(\lambda^*) - \xi \epsilon\ge G(\lambda^*)- \xi \epsilon.
\end{aligned}\]
In other words, \(\lambda_{\xi , \delta,\epsilon}\) is a \((\xi \epsilon)\)-solution to the dual problem \eqref{eq:Original Dual Linear}.

Now when \(F_{\xi ,\delta}(\hat{x}_{\xi ,\delta})-F_{\xi ,\delta}(x_{\xi ,\delta}^*)\le \xi \epsilon\),
\[\begin{aligned}
G(\lambda^*)&=F(x^*)=F_{\xi ,0}(x^*)\le F_{\xi ,0}(\hat{x}_{\xi ,\delta})\le F_{\xi ,0}(\hat{x}_{\xi ,\delta})\\ 
&\le F_{\xi ,\delta}(\hat{x}_{\xi ,\delta}) \le G_{\xi ,\delta}(\lambda_{\xi ,\delta}^*) + \xi \epsilon \le G(\lambda_{\xi_{\infty,\delta,\epsilon}}) + 2\xi \epsilon,
\end{aligned}\]
and then \(G(\lambda^*)-G(\hat{\lambda}_{\xi ,\delta})\le F_{\xi ,0}(\hat{x}_{\xi ,\delta}) - G(\hat{\lambda}_{\xi ,\delta})\le 2\xi \epsilon\).
\end{proof}

Notice that the gap \(F_{\xi ,0}(\hat{x}_{\xi ,\delta}) - G(\hat{\lambda}_{\xi ,\delta})\), which we bound in \cref{prop:Saddle Pair Linear}, is the duality gap between the penalized problem \(\min_{x} F_{\xi ,0}(x)\),
and its dual 
\[\begin{aligned}
    \max_{\lambda}\quad &G(\lambda) := -F^*(-A^T\lambda)-b^T\lambda,\\
    \mathrm{s.t.}\quad &0\le\lambda\le\xi ,
\end{aligned}\]
which have the same optimal costs with the original primal and dual problems. Hence, we report it as the duality gap in the numerical experiments in \Cref{sec:numerical}.
In addition, since the duality gap \(F_{\xi,\delta}(\hat x_{\xi,\delta})-G_{\xi,\delta}(\hat \lambda_{\xi,\delta})\) upper bounds \(F_{\xi,\delta}(\hat x_{\xi,\delta})-F_{\xi,\delta}^*\), we can use this gap as the stopping criterion (e.g., instead of the norm of the gradient mapping) in Algorithm \ref{algo_nested_linear}. One can then develop results analogous to \cref{lem:Nested Complexity-Generic} and \cref{prop:Nested Complexity-SVRG Catalyst} that use this duality gap in place of the norm of the gradient mapping.

We are now ready to state our main theorem on the primal and dual convergence of the nested penalty method (\cref{algo_nested_linear}) using proximal SVRG with catalyst acceleration \cite{lin2015universal, lin2018catalyst}, which combines the results of \cref{prop:Nested Complexity-SVRG Catalyst} and \cref{prop:Saddle Pair Linear}. Note that, given the sequence $\{\tilde{x}_t\}$ output by \cref{algo_nested_linear}, we define a sequence of dual solutions $\{\tilde{\lambda}_t\}$ by
\[\tilde{\lambda}_t := \xi \left (\frac{\exp((a_1^T\tilde{x}_t-b_1)/\delta_t)}{1+\exp((a_1^T\tilde{x}_t-b_1)/\delta_t)},...,\frac{\exp((a_{m}^T\tilde{x}_t-b_{m})/\delta_t)}{1+\exp((a_{m}^T\tilde{x}_t-b_{m})/\delta_t)} \right )^T.\]

\begin{theorem}\label{Thm: Main Linear New}
Consider applying the nested penalty method (\cref{algo_nested_linear}), with penalty parameter \(\xi \ge \bar{\xi}\), and using proximal SVRG with catalyst acceleration \cite{lin2015universal, lin2018catalyst} to solve the penalty reformulation subproblems. Let $\{\epsilon_t\}$ be an accuracy sequence for the subproblems satisfying
\begin{equation*}
\epsilon_t = \min\left\{\frac12 m\mu\delta_t^2\log^2\left(\frac{\xi s_{\max}}{\mu\delta_t}\right), {m}\xi\delta_t\log\left(\frac{\xi s_{\max}}{\mu\delta_t}\right)\right\} \text{ for } t \geq 0,
\end{equation*}
and suppose that the stopping criterion  \eqref{eq:stopping_time_map} is used with step-size \(\alpha_t = 1/(L+\frac{\xi s_{\max}^2}{4\delta_t})\), i.e., $\tau_t = \tilde{T}_{\calM, F_{\xi ,\delta_t}}(\sqrt{2\mu\epsilon_t}; \tilde{x}_{t-1}, \alpha_t)$ for all $t \geq 0$.
Then, for any desired accuracy \(\epsilon\in (0,\frac{{m}\xi }{\mu}\exp(-2)]\), the expected number of incremental steps required to satisfy 
\(\|\tilde{x}_T - x^\ast\|_2 \leq \epsilon\) and \(G(\lambda^*)-G(\tilde{\lambda}_T)\le F_{\xi,0}(\tilde x_T) - G(\tilde{\lambda}_T)\le \xi\epsilon\)
is upper bounded by
\[
\tilde O\left({\ell}+\sqrt{\frac {({\ell}+{m})L} \mu}+{m}\sqrt{\frac{({\ell}+{m})\xi }{\mu\epsilon}}\right).
\]
\end{theorem}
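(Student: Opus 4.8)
The plan is to obtain this result as essentially a free corollary of the two pillars already in place: \cref{prop:Nested Complexity-SVRG Catalyst} for the primal iterate and the complexity count, and \cref{prop:Saddle Pair Linear} for the passage from a near-optimal primal point to a near-optimal dual point. Observe first that the accuracy sequence $\{\epsilon_t\}$, the step-sizes $\alpha_t$, the stopping rule \eqref{eq:stopping_time_map}, and the initial smoothness $\delta_0$ are identical to those in \cref{prop:Nested Complexity-SVRG Catalyst}. Hence the primal conclusion $\|\tilde{x}_T - x^\ast\|_2 \le \epsilon$, together with the complexity bound $\tilde O(\ell + \sqrt{(\ell+m)L/\mu} + m\sqrt{(\ell+m)\xi/(\mu\epsilon)})$ on the expected number of incremental steps, carries over verbatim. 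The only genuinely new content is to verify that the dual iterate $\tilde{\lambda}_T$ is simultaneously $(\xi\epsilon)$-optimal, and to confirm that producing it does not inflate the complexity.

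For the dual statement I would invoke the \emph{function-gap} branch of \cref{prop:Saddle Pair Linear} at the terminal stage $t = T$, taking its accuracy parameter to be $\epsilon/2$. Two hypotheses must be checked at the terminal smoothness $\delta_T$. First, the function-value gap: the stopping rule $\tau_T = \tilde{T}_{\calM, F_{\xi,\delta_T}}(\sqrt{2\mu\epsilon_T}; \tilde{x}_{T-1}, \alpha_T)$ together with item (2.)\ of \cref{lemma:stopping criteria:general proximal} (as already used in the proof of \cref{lem:Nested Complexity-Generic}) gives $F_{\xi,\delta_T}(\tilde{x}_T) - F_{\xi,\delta_T}^\ast \le \epsilon_T$; since by definition $\epsilon_T \le m\xi\delta_T\log(\xi s_{\max}/(\mu\delta_T))$ and, by the choice of $T$ in the proof of \cref{prop:Nested Complexity-SVRG Catalyst}, $2m\delta_T\log(\xi s_{\max}/(\mu\delta_T)) \le \epsilon$, we conclude $F_{\xi,\delta_T}(\tilde{x}_T) - F_{\xi,\delta_T}^\ast \le \xi\epsilon/2$, which is exactly the hypothesis of \cref{prop:Saddle Pair Linear} at accuracy $\epsilon/2$. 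Second, the smoothness condition $\delta_T \le \tfrac{\epsilon/2}{2m\log 2} = \tfrac{\epsilon}{4m\log 2}$: this follows from the same termination inequality once we note that $\delta_T \le \delta_0 = \tfrac{\xi}{\mu}\exp(-2)$ and $s_{\max} \ge 1$ force $\log(\xi s_{\max}/(\mu\delta_T)) \ge 2 > 2\log 2$, so that $2m\delta_T\log(\xi s_{\max}/(\mu\delta_T)) \le \epsilon$ yields $\delta_T \le \epsilon/(4m\log 2)$.

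With both hypotheses in hand, the second conclusion of \cref{prop:Saddle Pair Linear} (applied with accuracy $\epsilon/2$) gives $G(\lambda^\ast) - G(\tilde{\lambda}_T) \le F_{\xi,0}(\tilde{x}_T) - G(\tilde{\lambda}_T) \le 2\xi(\epsilon/2) = \xi\epsilon$, which is the desired duality-gap bound; here $\tilde{\lambda}_T$ is precisely the $\hat{\lambda}_{\xi,\delta}$ of \cref{prop:Saddle Pair Linear} instantiated at $\tilde{x}_T$ and $\delta_T$. Finally, constructing $\tilde{\lambda}_T$ only requires evaluating the $m$ coordinate sigmoids $\exp((a_i^T\tilde{x}_T - b_i)/\delta_T)/(1+\exp(\cdots))$, i.e.\ $O(m)$ additional incremental steps performed once, which is absorbed into the $\tilde O$ bound; since all algorithmic parameters coincide with those of \cref{prop:Nested Complexity-SVRG Catalyst}, the expected incremental-step count is unchanged. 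The main (and essentially only) subtlety is the factor-of-two bookkeeping: one must recognize that both the terminal accuracy $\epsilon_T$ and the termination rule governing $\delta_T$ each carry a spare factor $\tfrac12$, which is exactly what permits invoking \cref{prop:Saddle Pair Linear} at half-accuracy $\epsilon/2$ and thereby landing at $\xi\epsilon$ rather than $2\xi\epsilon$.
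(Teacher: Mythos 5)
Your proposal is correct and follows exactly the route the paper intends: the theorem is stated as a combination of \cref{prop:Nested Complexity-SVRG Catalyst} (primal error and complexity) with the function-gap branch of \cref{prop:Saddle Pair Linear} (dual gap), and your verification of the two hypotheses at stage $T$ — including the factor-of-two bookkeeping that lets you invoke \cref{prop:Saddle Pair Linear} at accuracy $\epsilon/2$ to land at $\xi\epsilon$ rather than $2\xi\epsilon$ — supplies precisely the details the paper leaves implicit.
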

\begin{remark}
Analogous to \cref{remark:epsilon_T}, by using a  smaller accuracy parameter in the final inner loop iteration, in \cref{Thm: Main Linear New}, one can also achieve the bound \(\|\tilde x_T-x^*\|_2\le\frac{4\epsilon}{\sqrt{m}{s_{\mathrm{pmin}}}}\) within \(\tilde O\left((\ell+m)\left(\sqrt{\frac L \mu}+\sqrt{\frac{{m}\xi s_{\max}^2}{\mu\epsilon}}\right)\right)\) expected incremental steps.
\end{remark}
\ignore{
\begin{theorem}\label{Thm: Main Linear}
Suppose problem \eqref{eq:Original Linear} satisfies \cref{assum:Assumption Linear New}, and we use the SVRG with catalyst acceleration in the nested algorithm \cref{algo_nested_linear} to solve the penalty reformulations, from a starting point \(x_{0}\), with penalty parameter \(\xi \ge \bar{\xi}\), with desired accuracy \(\epsilon\in (0,\frac{{m}\xi }{\mu}\exp(-2)]\), after at most
\[
\tilde O\left({\ell}+\sqrt{\frac {({\ell}+{m})L} \mu}+{m}\sqrt{\frac{({\ell}+{m})\xi }{\mu\epsilon}}\right),
\]
expected number of evaluations on \(f_i(x)\), \(a_i^Tx-b_i\), and their gradients, we will reach a primal solution $x_{\mathrm{sol}}$ and a dual solution \(\lambda_{\mathrm{sol}}\), s.t. \(\|x_{\mathrm{sol}}-x^*\|_2\le \min(\epsilon,\frac{4\epsilon}{\sqrt{m}{s_{\mathrm{pmin}}}})\), and \(G(\lambda^*)-G(\lambda_{\mathrm{sol}})\le\xi \epsilon\). 

\end{theorem}
}
\ignore{
\subsection{Discussion on the Complexity in \cref{Thm: Main Linear}}
\label{subsec:lower bounds}
\paul{Why is this section located here and not in Section 3?}
In this subsection, we compare the complexity in \cref{Thm: Main Linear} with the following existing results of lower bounds of complexities on problems similar to \eqref{eq:Original Linear}, to test the efficiency of our algorithm.

In \cite{agarwal2015lower}, they consider the family of optimization problems
\begin{equation}\label{Agarwal Problem}
    \min_{x \in \ell_2}\quad F(x)=\frac1m\sum_{i=1}^mf_i(x),
\end{equation}
where \(F(x)\) is \(\mu\)-strongly convex, and the components \(f_i\) are \(L\)-smooth, \(i=1,...,m\), and \(\ell_2\) is the space of sequences with finite 2-norm. To guarantee a solution \(x_{K}\) such that \(\|x^*-x_t\|_2\le \epsilon \|x^*\|\), they establish a lower bound: \(t=\tilde \Omega(m+\sqrt{\frac{mL}{\mu}})\) calls of (\(f_i(x),\nabla f_i(x)\)) are necessary for first-order methods (see Corollary 1 in \cite{agarwal2015lower}). This upper bound explains the first 2 terms of the complexity in \cref{Thm: Main Linear} when \({m}\) is 0 or relatively small, and the catalyst accelerated SVRG applied in \cref{algo_nested_linear} helps to achieve the bound. 

If we take constraints into consideration, \cite{ouyang2021lower} gives another lower bound. they consider the following affinely constrained convex problem as a special case of convex-concave bilinear saddle-point problems (SPPs):
\begin{equation}\label{eq:Ouyang Affine Constraints Problem}
\begin{aligned}
    \min_{x\in\mathbb{R}^n}\quad &f(x)\\
    \mathrm{s.t.}\quad &a_i^Tx= b_i, \  i=1,...,m.
\end{aligned}
\end{equation}
And they give a lower bound that, for \(t=O(m)\), there are instances of \eqref{eq:Ouyang Affine Constraints Problem}, such that the iterate obtained from a deterministic first-order method, \(\overline{x}_t\), will have an error \(\|\overline{x}_t-x^*\|_2\ge \Omega(\frac{\|A\|_2\|\lambda^*\|_2}{\mu t})\) (see Theorem 8 in \cite{ouyang2021lower}). This result indicates a lower bound \(\Omega(\min(m^2,m\frac{\|A\|_2\|\lambda^*\|_2}{\mu\epsilon}))\) of calls on the \((a_i^Tx-b_i)\) and their gradients to have a 2-norm error \(\epsilon\) of the solution, for a deterministic algorithm. Compared with this result, notice for any problem with affine constraints \(Ax=b\), we could scale the rows of \(A\) and the entries of \(b\) to make the norms of rows to be \(1\), and the entries of the dual solution will be scaled respectively. Then, the third term of our complexity (when \({\ell}=1\)), when \(\xi =\|\lambda^*\|_{\infty}\), will be \(\tilde O({m}\sqrt{\frac{{m}\|\lambda\|_{\infty}}{\mu\epsilon}})\), which is better than the deterministic methods when \(\epsilon\le O(\frac{\|A\|_2^2\|\lambda^*\|_2^2}{{m}\mu\|\lambda^*\|_{\infty}})\), considering \(\|A\|_2\ge1\) and \(\|\lambda^*\|_2\ge\|\lambda^*\|_{\infty}\).

}

\ignore{

In \cite{agarwal2015lower}, for an optimization problem 
\begin{equation}\label{Agarwal Problem}
    \min_{x\in\mathcal{X}}\quad F(x)=\frac1m\sum_{i=1}^mf_i(x)
\end{equation}
given the same requirements on \(f_i(x)\) and \(F(x)\) (the class of convex functions \(F(x)\) of this form is denoted by \(\mathcal{F}_n^{\mu,L}(\mathcal{X})\)), they have the following lower bounds in terms of calls of \(f_i\) for the incremental first-order oracle (IFO) algorithms (algorithms that do not depend on \(F\) other than through calls of (\(f_i(x),\nabla f_i(x)\)), i.e., IFO calls).

\begin{lemma}\label{Agarwal Lower Bound}
    Consider an IFO algorithm for problem \eqref{Agarwal Problem} that guarantees \(\|x_f^*-x_K\|\le \epsilon \|x_f^*\|\) for any \(\epsilon<1\). Then, there is a function \(F\in\mathcal{F}_n^{\mu,L}(\ell_2)\) on which the algorithm must perform at least \(K=\Omega\left(m+\sqrt{m(\frac L{\mu}-1)}\log(\frac1\epsilon)\right)\) IFO calls.
\end{lemma}

Given this result, to solve \eqref{eq:Original Linear}, \(\tilde \Omega\left({\ell}+\sqrt{\frac{{\ell}L}{\mu}}\right)\) calls of \((f_i(x),\nabla f_i(x))\) is necessary, which explains the first 2 parts of the complexity in \cref{Thm: Main Linear} (except for log terms).

In \cite{ouyang2021lower}, they consider the following affinely constrained convex problem as a special case of convex-concave bilinear saddle-point problems (SPPs):
\begin{equation}\label{eq:Ouyang Affine Constraints Problem}
\begin{aligned}
    \min_{x\in\mathbb{R}^n}\quad &f(x)\\
    \mathrm{s.t.}\quad &a_i^Tx= b_i, \  i=1,...,m.
\end{aligned}
\end{equation}

They have the following lower bounds in terms of iterations for solving \eqref{Ouyang Affine Constraints Problem} with first-order  methods.
\begin{lemma}\label{Ouyang Lower Bound Strong Convexity}
Let $8<m \leq n$ be positive integers, and $\mu$ and $L_{A}$ be positive numbers. For any positive integer $t<\frac{m}{4}-2$ and any first-order method $\mathcal{M}$, there exists an instance of \eqref{eq:Ouyang Affine Constraints Problem} such that $\tilde{f}$ is $\mu$ -strongly convex, and $\|\tilde{A}\|_2=L_{A}$. In addition, the instance has a unique primal-dual solution $(\hat{x}, \hat{y})$, and
$$
\left\|\overline{{x}}^{(t)}-\hat{{x}}\right\|_2^{2} \geq \frac{5 L_{A}^{2}\left\|\hat{y}\right\|_2^{2}}{256 \mu^{2}(2 t+5)^{2}}
$$
where $\overline{{x}}^{(t)}$ is the approximate solution of output by $\mathcal{M}$.
\end{lemma}

Compared with this result, first notice that the family of problems of the form \eqref{eq:Original Linear} contains \eqref{eq:Ouyang Affine Constraints Problem}. Second, for any problem \eqref{Ouyang Affine Constraints Problem} with affine constraints \(Ax=b\) and dual solution \(\hat y\), we could scale the rows of \(A\) and the entries of \(b\) to make the norms of rows to be \(1\), and the entries of the dual solution will be scaled respectively. Third, if \(\|a_i\|=1\), \(i=1,...,m\), then \(\|A\|_2\ge 1\). Also, \(\|\hat y\|_2\ge \|\hat y\|_{\infty}\). Therefore, to solve \eqref{eq:Original Linear} with deterministic first-order methods, a minimum of \(\Omega(\frac{\|A\|_2\|\hat y\|_{2}}{\mu\epsilon})\ge\Omega(\frac{\|\hat y\|_{\infty}}{\mu\epsilon})\) iterations is required. The second part of the complexity in \cref{Thm: Main Linear}, when setting \(\gamma=m\|\hat y\|_{\infty}\), matches \(\Omega(m\frac{\|\hat y\|_{\infty}}{\mu\epsilon})\) in terms of calls of \(a_i\) and \(a_i^Tx\) (except for log terms), when \(\frac{\|\hat y\|_{\infty}}{\mu\epsilon}\sim m\), which is the case with the largest complexity.
(Ignored)}

\section{Screening Procedure for the Nested Penalty Method}\label{sec:screening}

In this section, we describe an additional enhancement to the dynamic nested penalty method (\cref{algo_nested_linear}) that uses a \emph{safe} screening procedure to remove constraints that are not tight. In particular, the KKT conditions of problem \eqref{eq:Original Linear} imply that if the \(i\)-th constraint is not tight, i.e., it is not active at \(x^*\), \(a_i^Tx^*-b_i < 0\), then the corresponding dual variable \(\lambda_i^*=0\) and removing this constraint from \eqref{eq:Original Linear} will not change the optimal solution \(x^*\). Recall that \cref{prop:estimation linear new z infty m} provides a bound on the norm of $z_{\xi ,\delta}=A(x_{\xi ,\delta}^*-x_{\xi, 0}^*)$, i.e., the distance between the slack variables at $x_{\xi ,\delta}^*$ and $x_{\xi, 0}^*$. Based on this bound, if $\xi \geq \bar{\xi}$ and if the slack \(b_i - a_i^Tx_{\xi ,\delta}^*\) 
is large enough, then one can safely conclude that \(a_i^Tx^*-b_i < 0\) and hence the \(i\)-th constraint can be dropped from the problem. Once can also apply a similar procedure to the approximate solutions $\tilde{x}_t$ to the penalty reformulation problems \eqref{eq:Penalty Linear New} computed at each outer loop iteration of the nested method (\cref{algo_nested_linear}). We refer to dropping such constraints as a screening procedure, and we prove that it is \emph{safe}, i.e., does not remove constraints that are tight at an optimal solution, by combining a result similar to \cref{prop:estimation linear new z infty m} with a bound on the distance between the slack variables at the approximate penalty solution $\tilde{x}_t$ and the exact solution $x_{\xi ,\delta_t}^*$.
Note that safe screening procedures to eliminate redundant variables and constraints are prevalent in machine learning literature (see, e.g., \cite{ghaoui2010safe,AG:screening}). 

Let us now formally introduce these ideas by considering the following version of the original problem \eqref{eq:Original Linear} based on a subset of the constraint indices \(\hat{S} \subseteq S^F := \{1, \ldots, m\}\):
\begin{equation}\label{eq:Original Linear Subset}
\begin{aligned}
\min_{x\in\mathbb{R}^n}\quad &F(x)=\frac1{\ell}\sum_{i=1}^{\ell}f_i(x)+\psi(x)\\
\mathrm{s.t.}\quad &a_i^Tx\le b_i, \  i\in \hat S,
\end{aligned}
\end{equation}
as well as the corresponding the penalty reformulation:
\begin{equation}\label{eq:Penalty Linear New Subset}
    \min_{x \in \bbR^n} F_{\hat S, \xi ,\delta}(x):=F(x)+\xi  \sum_{i\in \hat S} p_{\delta}(a_i^Tx-b_i).
\end{equation}
Let \(x_{\hat S,\xi,\delta}^*\) denote the unique optimal solution of \eqref{eq:Penalty Linear New Subset} for a given $\xi \geq 0$, $\delta \geq 0$. Furthermore, let \(S^{A} := \{i\in S^F:a_i^Tx^*-b_i=0\}\) denote the set of indices of active (tight) constraints at the optimal solution $x^\ast$ of \eqref{eq:Original Linear}. Also, for \(\Delta \ge 0\), we let \({S_{\Delta}}\defeq\{i\in S^F:a_i^Tx^*-b_i\in [-\Delta,0]\}\) and \({m_{\Delta}}\defeq\mathrm{card}({S_{\Delta}})\). Notice, \( S^A \subseteq {S_{\Delta}} \subseteq S^F\) and \(m^A\defeq\mathrm{card}(S^A)\le {m_{\Delta}}\le m\) for all \(\Delta\ge 0\). Finally, let \(A_S\) denote the submatrix consisting of the rows of \(A\) with indices in \(S\).
Then, the Lemma below follows from applying \cref{lemma:0 point} on \eqref{eq:Original Linear Subset} and \eqref{eq:Penalty Linear New Subset}.
\begin{lemma}\label{lemma:0 Point Subset}
If \(\xi \ge \bar{\xi}\) and \(S^{A}\subseteq \hat S \subseteq S^{F}\), then it holds that \(x_{\hat S, \xi ,0}^*=x^*\).
\end{lemma}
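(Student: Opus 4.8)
The plan is to reduce the statement to \cref{lemma:0 point} applied to the subset problem \eqref{eq:Original Linear Subset} together with its penalty reformulation \eqref{eq:Penalty Linear New Subset}, rather than re-deriving the subdifferential argument from scratch. Writing $\Lambda_{\hat S}^\ast$ for the set of optimal dual multipliers of \eqref{eq:Original Linear Subset} and $\bar{\xi}_{\hat S} := \inf_{\lambda \in \Lambda_{\hat S}^\ast}\|\lambda\|_\infty$ for the corresponding threshold, \cref{lemma:0 point} applied to the reduced problem yields $x_{\hat S,\xi,0}^\ast = x^\ast$ as soon as (i) $x^\ast$ is the optimal solution of \eqref{eq:Original Linear Subset}, and (ii) $\xi \ge \bar{\xi}_{\hat S}$. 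Since we are given $\xi \ge \bar{\xi}$, it therefore suffices to establish (i) and the inequality $\bar{\xi}_{\hat S} \le \bar{\xi}$.

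First I would produce a dual certificate for $x^\ast$ in the reduced problem. By the Lagrangian necessary conditions for the full problem \eqref{eq:Original Linear} (as in the proof of \cref{lemma:0 point}), there is $\lambda^\ast \in \Lambda^\ast$ with $0 \in \partial F(x^\ast) + \sum_{i \in S^F}\lambda_i^\ast a_i$ and $\lambda_i^\ast = 0$ whenever $a_i^Tx^\ast - b_i < 0$; in particular $\lambda_i^\ast = 0$ for every $i \in S^F \setminus S^A$. Because $S^A \subseteq \hat S$, every removed index in $S^F \setminus \hat S$ lies in $S^F \setminus S^A$, so its multiplier vanishes and $\sum_{i \in S^F}\lambda_i^\ast a_i = \sum_{i \in \hat S}\lambda_i^\ast a_i$. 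Hence the restriction $\lambda_{\hat S}^\ast := (\lambda_i^\ast)_{i \in \hat S}$ satisfies $0 \in \partial F(x^\ast) + \sum_{i \in \hat S}\lambda_i^\ast a_i$, together with feasibility ($x^\ast$ is feasible for \eqref{eq:Original Linear}, and \eqref{eq:Original Linear Subset} has fewer constraints) and complementary slackness. By convexity these KKT conditions are sufficient, so $x^\ast$ solves \eqref{eq:Original Linear Subset} and $\lambda_{\hat S}^\ast \in \Lambda_{\hat S}^\ast$, which establishes (i).

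To finish, I would bound the reduced threshold. Since only zero components of $\lambda^\ast$ are discarded in passing to $\lambda_{\hat S}^\ast$, we have $\|\lambda_{\hat S}^\ast\|_\infty = \|\lambda^\ast\|_\infty$, and therefore $\bar{\xi}_{\hat S} \le \|\lambda_{\hat S}^\ast\|_\infty = \|\lambda^\ast\|_\infty$ for every $\lambda^\ast \in \Lambda^\ast$. Taking the infimum over $\lambda^\ast \in \Lambda^\ast$ gives $\bar{\xi}_{\hat S} \le \bar{\xi}$, so $\xi \ge \bar{\xi} \ge \bar{\xi}_{\hat S}$, and \cref{lemma:0 point} applied to the reduced pair \eqref{eq:Original Linear Subset}--\eqref{eq:Penalty Linear New Subset} gives $x_{\hat S,\xi,0}^\ast = x^\ast$.

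The hard part will be the bookkeeping in this reduction: verifying that the restriction $\lambda_{\hat S}^\ast$ is genuinely an optimal dual multiplier for the reduced problem and that the infinity norm is preserved under this restriction. Both hinge on the key structural fact that $\hat S$ contains all active indices $S^A$, so that every dropped constraint is inactive at $x^\ast$ and carries a zero multiplier by complementary slackness; everything else is a direct appeal to \cref{lemma:0 point}.
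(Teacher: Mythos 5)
Your proposal is correct and follows exactly the route the paper takes: the paper gives no written proof, stating only that the lemma "follows from applying \cref{lemma:0 point} on \eqref{eq:Original Linear Subset} and \eqref{eq:Penalty Linear New Subset}," and your argument supplies precisely the missing bookkeeping (that dropped constraints are inactive, hence carry zero multipliers, so $x^\ast$ remains optimal for the reduced problem and the dual threshold does not increase).
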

Similarly, we obtain \cref{prop:estimation linear new z infty m Subset} as a corollary of \cref{prop:estimation linear new z infty m}.
\begin{proposition}\label{prop:estimation linear new z infty m Subset}
For \(\xi >0\) and \(\delta\in [0,\frac{s_{\max}^2(A_S)\xi }{\mu}\exp(-2)]\), it holds that 
\[\|z_{S, \xi ,\delta}\|_2=\|A_S(x_{S,\xi,\delta}^*-x_{S,\xi,0}^*)\|_2\le \sqrt{\mathrm{card}(S)}\delta\log\left(\frac{s_{\max}^2(A_S)\xi }{\mu\delta}\right) \cdot\]
\end{proposition}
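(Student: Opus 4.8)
The plan is to obtain the result as a direct instance of \cref{prop:estimation linear new z infty m}, applied to the reduced problem \eqref{eq:Original Linear Subset} and its penalty reformulation \eqref{eq:Penalty Linear New Subset} rather than to the full problem. Observe that \eqref{eq:Penalty Linear New Subset} has exactly the same form as \eqref{eq:Penalty Linear New}, with the objective $F$ unchanged and with the constraint system now described by the submatrix $A_S$ (having $\mathrm{card}(S)$ rows) together with the corresponding entries of $b$, in place of the full matrix $A$ (having $m$ rows). Since $F$ is the same function, it remains $\mu$-strongly convex with the same constant $\mu$, and the retained rows $a_i$, $i \in S$, still satisfy $\|a_i\|_2 = 1$; hence \cref{assum:Assumption Linear New} continues to hold for the reduced instance. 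Consequently $x_{S,\xi,\delta}^*$ and $x_{S,\xi,0}^*$ play the roles of $x_{\xi,\delta}^*$ and $x_{\xi,0}^*$, and $z_{S,\xi,\delta} = A_S(x_{S,\xi,\delta}^* - x_{S,\xi,0}^*)$ plays the role of $z_{\xi,\delta}$.

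First I would verify that nothing in the proof of \cref{prop:estimation linear new z infty m} relies on properties specific to the full index set $S^F$. That argument differentiates the optimality condition $\nabla F_{S,\xi,\delta}(x_{S,\delta}^*) = 0$ with respect to $\delta$, applies Woodbury's identity to the matrix $\delta \nabla^2 F + \xi A_S^T \Pi_{\tilde z_\delta, \delta} A_S$, and then uses the spectral bound $A_S(\nabla^2 F/\mu)^{-1} A_S^T \preceq s_{\max}^2(A_S) I$ together with \cref{lemma:1d 1+exp}. Each of these steps depends only on the data $(F, \mu, A_S, \mathrm{card}(S))$ and not on which subset of constraints is retained; in particular the spectral bound is governed precisely by $s_{\max}(A_S)$, the maximum singular value of $A_S$.

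It therefore suffices to substitute $A \mapsto A_S$, $m \mapsto \mathrm{card}(S)$, and $s_{\max} \mapsto s_{\max}(A_S)$ throughout the statement of \cref{prop:estimation linear new z infty m}. This yields the validity range $\delta \in [0, \frac{s_{\max}^2(A_S)\xi}{\mu}\exp(-2)]$ and the bound
\[
\|z_{S,\xi,\delta}\|_2 \le \sqrt{\mathrm{card}(S)}\,\delta \log\left(\frac{s_{\max}^2(A_S)\xi}{\mu\delta}\right),
\]
which is exactly the claim. The only point requiring care—and hence the main thing to check—is the robustness of the parent proof to restriction: namely, that the $C^\infty$-approximation reduction to the twice-continuously differentiable case at the start of that proof, and the subsequent Woodbury and singular-value estimates, remain valid verbatim when $A$ is replaced by $A_S$. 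Since none of these invoke any feature of the full constraint set beyond $\mu$-strong convexity of $F$ and the row normalization $\|a_i\|_2 = 1$, this verification is immediate and the corollary follows.
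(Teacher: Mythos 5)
Your proposal is correct and coincides with the paper's treatment: the paper states this result without proof, as a direct corollary of \cref{prop:estimation linear new z infty m} obtained by applying that argument to the reduced problem \eqref{eq:Original Linear Subset}--\eqref{eq:Penalty Linear New Subset} with $A_S$, $\mathrm{card}(S)$, and $s_{\max}(A_S)$ in place of $A$, $m$, and $s_{\max}$. Your additional check that the $C^\infty$-approximation reduction, the Woodbury step, and the spectral bound depend only on $\mu$, the row normalization, and the retained rows is exactly the verification that makes the substitution legitimate.
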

\cref{algo_nested_linear_screening} presents the nested penalty method that includes the screening procedure at every outer loop iteration. \cref{lem:Nested Complexity-Generic subset} presents a convergence results for \cref{algo_nested_linear_screening} and verifies that the screening procedure safely removes only constraints that are guaranteed to not be tight at the optimal solution $x^\ast$.

\begin{algorithm}[ht]
\caption{Generic Nested Penalty Method with Screening Procedure}
\label{algo_nested_linear_screening}
\begin{algorithmic}
    \STATE {\bf Parameters:}  Penalty parameter $\xi > 0$, smoothness scaling parameter \(\eta>1\), and generic unconstrained convex optimization method $\calM$.
    \STATE {\bf Initialize:} $\hat{x}_{-1} \in \bbR^n$, $\delta_{0} > 0$, \(\hat S_{0}=S^F\), \(\hat m_{0} = m\).
    \FOR{$t = 0, 1, \ldots, T$}
    \STATE 1. 
    Apply method $\calM$ on the penalty reformulation problem \eqref{eq:Penalty Linear New Subset},
    initialized at $\tilde{x}_{t-1}$ and for $\tau_t \geq 1$ total iterations, to obtain an approximate solution
    \begin{equation*}
        \hat{x}_t \approx \arg\min_{x \in \bbR^n} \left\{F_{\hat S_t,\xi,\delta_t}(x) := F(x) + \xi\sum_{i\in \hat S_t} p_{\delta_t}(a_i^Tx-b_i)\right\}.
    \end{equation*}
    \STATE 2. Choose step-size $\alpha_t > 0$ and set $\tilde{x}_t \gets \mathrm{prox}_{\psi}(\hat{x}_t - \alpha_t\nabla\tilde{F}_{\hat S_t,\xi,\delta_t}(\hat{x}_t); \alpha_t)$.
    \STATE 3. Screening procedure: set \(\hat S_{t+1} \gets \{i \in \hat{S}_t : a_i^T \tilde x_{t}-b_i \geq -2\sqrt{\hat m_t}\delta_t \log (\frac{\hat m_t\xi}{\mu\delta})\}\) and update \(\hat m_{t+1}\gets \mathrm{card}(\hat S_{t+1})\).
    \STATE 4. Update $\delta_{t+1} \gets \delta_{t}/\eta$.
    \ENDFOR
\end{algorithmic}
\end{algorithm}

\begin{lemma}\label{lem:Nested Complexity-Generic subset}
Consider applying the generic nested penalty method (\cref{algo_nested_linear_screening}) using penalty parameter \(\xi \ge \bar{\xi}\) and initial smoothness parameter $\delta_0 \leq \frac{\xi}{\mu}\exp(-2)$. Let $\{\epsilon_t\}$ be an accuracy sequence for the penalty reformulation subproblems satisfying
\begin{equation*}
0 < \epsilon_t \leq \frac12 \hat m_t\mu\delta_t^2\log^2\left(\frac{\xi s_{\max}}{\mu\delta_t}\right) \text{ for } t \geq 0,
\end{equation*}
and suppose that the stopping criterion  \eqref{eq:stopping_time_map} is used with step-size \(0 < \alpha_t \leq 1/(L+\frac{\xi s_{\max}^2}{4\delta_t})\), i.e., $\tau_t = \tilde{T}_{\calM, F_{\xi ,\delta_t}}(\sqrt{2\mu\epsilon_t}; \tilde{x}_{t-1}, \alpha_t)$ for all $t \geq 0$.
Then, for any $t \geq 0$, it holds that:
\begin{equation}\label{eq:generic_convergence subset}
\begin{aligned}
\|\tilde{x}_t - x^\ast\|_2 &\leq \left(\min\left\{\hat m_t,\frac{4\sqrt{\hat m_t}}{{s_{\mathrm{pmin}}}}\right\}+\sqrt{\hat m_t}\right) \delta_t \log\left(\frac{\xi s_{\max}}{\mu\delta_t}\right)\\&=\frac{\left(\min\left\{\hat m_t,\frac{4\sqrt{\hat m_t}}{{s_{\mathrm{pmin}}}}\right\}+\sqrt{\hat m_t}\right)\delta_0}{\eta^t}\left[t\log(\eta) + \log\left(\frac{\xi s_{\max}}{\mu\delta_0}\right)\right].
\end{aligned}
\end{equation}
In addition, it holds that \(S^A\subseteq \hat S_{t+1} \subseteq S_{\hat \Delta_t} \subseteq S_{\Delta_t},\)
where
\[ 
\hat \Delta_{t} = 4\sqrt{\hat m_{t}}\delta_t\log\left(\frac{\hat m_{t}\xi}{\mu\delta_t}\right),\ \Delta_{t}= 4\sqrt{m}\delta_t\log\left(\frac{m\xi}{\mu\delta_t}\right).\]
\end{lemma}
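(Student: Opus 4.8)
The plan is to prove the convergence bound and the screening-safety inclusions \emph{simultaneously} by induction on $t$, because the two are coupled: the stage-$t$ convergence estimate requires that no active constraint has already been dropped (so that \cref{lemma:0 Point Subset} applies with the current index set $\hat S_t$), while the safety of the stage-$t$ screening step is established \emph{using} that same convergence estimate. The induction hypothesis is $S^A \subseteq \hat S_t$, which holds at the base case $t = 0$ since $\hat S_0 = S^F$.

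For the convergence bound at a fixed $t$ under the hypothesis $S^A \subseteq \hat S_t$, I would first invoke \cref{lemma:0 Point Subset} to get $x_{\hat S_t,\xi,0}^* = x^*$, so that the subset penalty problem \eqref{eq:Penalty Linear New Subset} has the original optimum as its $\delta = 0$ solution. I then split $\|\tilde x_t - x^*\|_2 \le \|\tilde x_t - x_{\hat S_t,\xi,\delta_t}^*\|_2 + \|x_{\hat S_t,\xi,\delta_t}^* - x^*\|_2$ exactly as in the proof of \cref{lem:Nested Complexity-Generic}. The first (approximation) term is controlled through item (2.)\ of \cref{lemma:stopping criteria:general proximal} together with the stopping rule, yielding $F_{\hat S_t,\xi,\delta_t}(\tilde x_t) - F^\ast \le \epsilon_t$ and hence, by $\mu$-strong convexity, $\|\tilde x_t - x_{\hat S_t,\xi,\delta_t}^*\|_2 \le \sqrt{2\epsilon_t/\mu}$; the accuracy condition then gives $\sqrt{2\epsilon_t/\mu} \le \sqrt{\hat m_t}\,\delta_t\log(\xi s_{\max}/(\mu\delta_t))$. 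The second (estimation) term is bounded by applying the subset analogues of Claims 2 and 3 of \cref{thm:estimation linear new} to $A_{\hat S_t}$, which produce the factor $\min\{\hat m_t, 4\sqrt{\hat m_t}/s_{\mathrm{pmin}}\}$. Summing the two terms gives the first line of \eqref{eq:generic_convergence subset}, and substituting $\delta_t = \delta_0/\eta^t$ yields the closed form.

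For the screening step I would establish $S^A \subseteq \hat S_{t+1}$ and $\hat S_{t+1} \subseteq S_{\hat\Delta_t}$ from one per-constraint estimate. For any $i \in \hat S_t$, write $a_i^T(\tilde x_t - x^*) = a_i^T(\tilde x_t - x_{\hat S_t,\xi,\delta_t}^*) + a_i^T(x_{\hat S_t,\xi,\delta_t}^* - x^*)$; the first piece is at most $\|\tilde x_t - x_{\hat S_t,\xi,\delta_t}^*\|_2 \le \sqrt{2\epsilon_t/\mu}$ by Cauchy--Schwarz and $\|a_i\|_2 = 1$, while the second piece is (using $x^* = x_{\hat S_t,\xi,0}^*$ from the induction hypothesis) a coordinate of $z_{\hat S_t,\xi,\delta_t} = A_{\hat S_t}(x_{\hat S_t,\xi,\delta_t}^* - x^*)$ and hence at most $\|z_{\hat S_t,\xi,\delta_t}\|_2$, which \cref{prop:estimation linear new z infty m Subset} bounds by $\sqrt{\hat m_t}\,\delta_t\log(s_{\max}^2(A_{\hat S_t})\xi/(\mu\delta_t))$. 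Using $s_{\max}^2(A_{\hat S_t}) \le \|A_{\hat S_t}\|_F^2 = \hat m_t$, both pieces are at most $\sqrt{\hat m_t}\,\delta_t\log(\hat m_t\xi/(\mu\delta_t))$, so $|a_i^T(\tilde x_t - x^*)| \le 2\sqrt{\hat m_t}\,\delta_t\log(\hat m_t\xi/(\mu\delta_t))$, exactly the screening threshold. For $i \in S^A$ we have $a_i^T\tilde x_t - b_i = a_i^T(\tilde x_t - x^*) \ge -2\sqrt{\hat m_t}\,\delta_t\log(\hat m_t\xi/(\mu\delta_t))$, so $i$ is retained and $S^A \subseteq \hat S_{t+1}$, closing the induction. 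Conversely, for a retained $i \in \hat S_{t+1}$, combining its screening inequality with the same per-constraint bound gives $a_i^Tx^* - b_i \ge -4\sqrt{\hat m_t}\,\delta_t\log(\hat m_t\xi/(\mu\delta_t)) = -\hat\Delta_t$ (and $\le 0$ by feasibility of $x^*$), i.e.\ $i \in S_{\hat\Delta_t}$; finally $\hat\Delta_t \le \Delta_t$ because $\hat m_t \le m$ and $\sqrt{w}\log(w\xi/(\mu\delta_t))$ is increasing in $w$, giving $S_{\hat\Delta_t} \subseteq S_{\Delta_t}$.

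The main obstacle is the bookkeeping that reconciles the three competing ``size'' measures appearing in the argument: the cardinality $\hat m_t$, the submatrix extreme singular values $s_{\max}(A_{\hat S_t})$ and $s_{\mathrm{pmin}}(A_{\hat S_t})$, and their full-matrix counterparts. The clean inequality $s_{\max}^2(A_{\hat S_t}) \le \hat m_t$, valid because each row has unit norm, is precisely what makes the two logarithms align under the single threshold $2\sqrt{\hat m_t}\,\delta_t\log(\hat m_t\xi/(\mu\delta_t))$ used by the algorithm. Extra care is needed because deleting rows does not monotonically control $s_{\mathrm{pmin}}$, so the estimation term must be routed through the $\min\{\cdot,\cdot\}$ in \eqref{eq:generic_convergence subset} rather than naively inheriting the full-matrix constants.
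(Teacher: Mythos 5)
Your proposal is correct and follows essentially the same route as the paper: an induction on the invariant $S^A \subseteq \hat S_t$, the convergence bound obtained exactly as in \cref{lem:Nested Complexity-Generic} with $\hat m_t$ in place of $m$, and the screening safety established via the same two-piece per-constraint decomposition (the $\sqrt{2\epsilon_t/\mu}$ approximation error plus the $\|z_{\hat S_t,\xi,\delta_t}\|_2$ bound from \cref{prop:estimation linear new z infty m Subset}, aligned through $s_{\max}^2(A_{\hat S_t}) \le \hat m_t$). Your write-up is merely more explicit than the paper's about the induction structure and the $S_{\hat\Delta_t} \subseteq S_{\Delta_t}$ monotonicity step, which the paper leaves implicit.
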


\ignore{
\begin{lemma}\label{lemma:stopping criteria nested Subset}
Let \(\{\hat x_k^{(t)}\}\) be the sequence generated by \(\calM\) applied to minimize \(F_{\hat S_t, \xi, \delta_t}\) in \cref{algo_nested_linear_screening}. Let \[\epsilon_t=\min\left\{\frac12 \hat m_t\mu\delta_t^2\log^2\left(\frac{\xi \hat m_t}{\mu\delta_t}\right), {\hat m_t}\xi\delta_t\log\left(\frac{\xi \hat m_t}{\mu\delta_t}\right)\right\} \text{ for } t \geq 0.\]
Let \(\tau_t := \inf\{k \geq 0 : \|g_{\alpha_t,F_{\hat S_t,\xi,\delta_t}}(\hat x_k^{(t)})\|_2\le \sqrt{2\mu\epsilon_t}\}\), where \(\alpha_t = 1/(L+\frac{\xi \hat m_t}{4\delta_t})\). Then, let \(\tilde x_t = \mathrm{prox}_{\alpha\psi}(\hat x_{\tau_t}^{(t)}-\alpha\nabla f_{\hat S_t,\xi,\delta}(\hat x_{\tau_t}^{(t)}) )\), we will have: 1, \[F_{\hat S_t,\xi,\delta_t}(\tilde x_t)-F_{\hat S_t,\xi,\delta}^*\le \epsilon_t,\] and \(\tau_t\le T_{\calM, F_{S_t,\xi,\delta_t}}(\frac{\mu}{L+\frac{\xi \hat m_t}{4\delta_t}}\epsilon_t; \hat x_{t-1})\); and 2, \[\hat S_{t+1}=\{i\in \hat S_t : a_i^T \hat x_{t}-b_i \ge -2\sqrt{\hat m_t}\delta_t \log (\frac{\hat m_t\xi}{\mu\delta}) \}\] satisfies 
\(S^A\subset \hat S_{t+1} \subset S_{\hat \Delta_t} \subset S_{\Delta_t},\)
where
\[ 
\hat \Delta_{t} = 4\sqrt{\hat m_{t}}\delta_t\log\left(\frac{\hat m_{t}\xi}{\mu\delta_t}\right),\ \Delta_{t}= 4\sqrt{m}\delta_t\log\left(\frac{m\xi}{\mu\delta_t}\right).\]
\end{lemma}
}
\begin{proof}
We only need to prove the second part as the first part is completely analogous to the proof of \cref{lem:Nested Complexity-Generic}. Suppose \(S^A \subseteq \hat S_t \), then by \cref{prop:estimation linear new z infty m Subset}, for all \(i\in \hat S_t\), we have
\[
|a_i^T x_{\hat S_t,\xi,\delta_t}^*-a_i^Tx^*| \le \|A_{\hat S_t}(x_{\hat S_t,\xi,\delta_t}^*-x^*)\|_2 \le \sqrt{\hat m_t}\delta_t\log\left(\frac{s_{\max}^2(A_{\hat S_t})\xi }{\mu\delta_t}\right).\]
Also, by \(\mu\)-strongly convexity of \(F_{\hat S_t,\xi,\delta_t}\), \(\|\tilde x_t-x_{\hat S_t \xi,\delta_t}^*\|_2\le \sqrt{\hat m_t}\delta_t\log\left(\frac{ \hat m_t\xi}{\mu\delta_t}\right)\). And then, since \(\|a_i\|_2=1\) (\cref{assum:Assumption Linear New}), 
\[|a_i^T \tilde x_t-a_i^Tx^*|\le 2\sqrt{\hat m_t}\delta_t\log\left(\frac{\hat m_t\xi }{\mu\delta_t}\right),\]
which means \(S^A\subseteq \hat S_{t+1} \subseteq S_{\hat \Delta_t} \subseteq S_{\Delta_t}\). By induction, since \(S^A \subseteq \hat S_0=S^F\), the result holds for all \(t=0,...,T-1\).
\end{proof}
As mentioned, the first part of \cref{lem:Nested Complexity-Generic subset} states the convergence of the nested method, as a corollary of \cref{lem:Nested Complexity-Generic}. The second part shows the safety of the screening procedure, as a result of the choice of \(\epsilon_t\) and \cref{prop:estimation linear new z infty m Subset}.

Similar to \cref{prop:Saddle Pair Linear}, we can also obtain a dual solution when solving the penalty reformulation \eqref{eq:Penalty Linear New Subset}.
\begin{proposition}\label{prop:Saddle Pair Linear Subset}
Let $\epsilon > 0$ be given, and suppose that \(\xi \ge \bar{\xi} = \inf_{\lambda^\ast \in \Lambda^\ast}\|\lambda^\ast\|_\infty\), \(S^{A}\subseteq \hat S \subseteq S^{F}\), \(\hat m =\mathrm{card}(\hat S)\), and \(0 < \delta \le \frac\epsilon {2{\hat m}\log2}\). For a given $\hat{x}_{\hat S,\xi ,\delta}$, define $\hat{\lambda}_{\hat S,\xi ,\delta}$ by
\[\hat \lambda_{\hat S,\xi,\delta,i}=\left\{
\begin{aligned}
&\ \xi \frac{\exp((a_i^T\hat{x}_{\hat S,\xi,\delta}-b_i)/\delta)}{1+\exp((a_i^T\hat{x}_{\hat S,\xi,\delta}-b_i)/\delta)},\quad &i\in \hat S\\
&\ 0, \quad &i\in S^F\backslash \hat S.
\end{aligned}
\right.\]
If \(\hat{x}_{\hat S,\xi ,\delta}\) satisfies \(\|\hat{x}_{\hat S,\xi ,\delta}-x_{\hat S,\xi ,\delta}^*\|_2\le \sqrt{\frac{\delta\epsilon}{\hat m}}\min(1,\frac{4\mu\delta}{\hat m})\), then it holds that $G(\lambda^*)-G(\hat{\lambda}_{\hat S,\xi ,\delta})\leq \xi \epsilon$. In addition, if \(F_{\hat S,\xi ,\delta}(\hat{x}_{\hat S,\xi ,\delta})-F_{\hat S,\xi ,\delta}(x_{\hat S,\xi ,\delta}^*)\le \xi \epsilon\), then it holds that $G(\lambda^*)-G(\hat{\lambda}_{\hat S,\xi ,\delta})\le F_{\hat S,\xi ,0}(\hat{x}_{\hat S,\xi ,\delta}) - G(\hat{\lambda}_{\hat S,\xi ,\delta})\le 2\xi \epsilon$.
\end{proposition}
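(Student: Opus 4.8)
The plan is to mirror the proof of \cref{prop:Saddle Pair Linear} almost verbatim, with the full constraint set replaced by $\hat S$. The essential first observation is that, because $S^A \subseteq \hat S \subseteq S^F$ and $\xi \ge \bar\xi$, \cref{lemma:0 Point Subset} guarantees that the subset problem \eqref{eq:Original Linear Subset} has the same primal optimal solution $x^*$ as \eqref{eq:Original Linear}. Moreover, by complementary slackness the dual optimal $\lambda^*$ of \eqref{eq:Original Dual Linear} satisfies $\lambda_i^* = 0$ for every inactive index $i \notin S^A$, so its support lies in $\hat S$; consequently the restriction of $\lambda^*$ to the coordinates in $\hat S$ is dual optimal for the subset problem and attains the same value $G(\lambda^*)$. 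This is the step I expect to require the most care, since the whole argument is anchored to the fact that $G(\lambda^*)$ remains the correct reference optimal dual value after discarding the constraints outside $\hat S$.

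With that in hand, I would introduce the subset analogues of the smoothed Lagrangian and dual functions, namely the un-smoothed $L_{\hat S}(x,\lambda) := F(x) + \lambda^T(A_{\hat S}x - b_{\hat S})$, its smoothing $L_{\hat S,\xi,\delta}(x,\lambda) := L_{\hat S}(x,\lambda) - \delta\pi_\xi^{\hat S}(\lambda)$, and $G_{\hat S,\xi,\delta}(\lambda) := \min_x L_{\hat S,\xi,\delta}(x,\lambda)$ for $\lambda \in [0,\xi]^{\hat m}$, where $\pi_\xi^{\hat S}$ is the $\hat m$-dimensional entropy term of \eqref{eq:Linear Lagrangians}; exactly as before one has $\pi_\xi^{\hat S}(\lambda) \in [-\hat m\xi\log 2, 0]$, and hence $L_{\hat S,\xi,\delta} - L_{\hat S} \in [0, \hat m\xi\delta\log 2]$. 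Because $\hat\lambda_{\hat S,\xi,\delta}$ is defined to vanish on $S^F \setminus \hat S$, evaluating $G$ at $\hat\lambda_{\hat S,\xi,\delta}$ coincides with evaluating the subset dual function, since $A^T\hat\lambda_{\hat S,\xi,\delta} = A_{\hat S}^T(\hat\lambda_{\hat S,\xi,\delta})_{\hat S}$ and $b^T\hat\lambda_{\hat S,\xi,\delta} = b_{\hat S}^T(\hat\lambda_{\hat S,\xi,\delta})_{\hat S}$. The sandwich $G(\lambda^*) \le G_{\hat S,\xi,\delta}(\lambda^*) \le G_{\hat S,\xi,\delta}(\lambda_{\hat S,\xi,\delta}^*) \le G(\lambda_{\hat S,\xi,\delta}^*) + \hat m\xi\delta\log 2 \le G(\lambda^*) + \hat m\xi\delta\log 2$ then yields $G(\lambda^*) - G(\lambda_{\hat S,\xi,\delta}^*) \le \tfrac{\xi\epsilon}{2}$ under the hypothesis $\delta \le \tfrac{\epsilon}{2\hat m\log 2}$.

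The remaining estimates will be identical to those in \cref{prop:Saddle Pair Linear} with $m$ replaced by $\hat m$ and $A$ by $A_{\hat S}$. Using $p_\delta'' \in [0,\tfrac1{4\delta}]$ I would bound $\|\hat\lambda_{\hat S,\xi,\delta} - \lambda_{\hat S,\xi,\delta}^*\|_2 \le \tfrac{\sqrt{\hat m}}{4\delta}\|\hat x_{\hat S,\xi,\delta} - x_{\hat S,\xi,\delta}^*\|_2$; then, combining the $\tfrac1\mu$-smoothness of $F^*$ with $\|A_{\hat S}\|_2 \le \sqrt{\hat m}$ (valid since $\|a_i\|_2 = 1$), I would bound $\|\nabla G_{\hat S,\xi,\delta}(\hat\lambda_{\hat S,\xi,\delta})\|_2 \le \left(\tfrac{\hat m\sqrt{\hat m}}{4\mu\delta} + \sqrt{\hat m}\right)\|\hat x_{\hat S,\xi,\delta} - x_{\hat S,\xi,\delta}^*\|_2$. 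Finally, the $\tfrac{4\delta}{\xi}$-strong concavity of $G_{\hat S,\xi,\delta}$ gives $G_{\hat S,\xi,\delta}(\lambda_{\hat S,\xi,\delta}^*) - G_{\hat S,\xi,\delta}(\hat\lambda_{\hat S,\xi,\delta}) \le \tfrac{\xi}{8\delta}\|\nabla G_{\hat S,\xi,\delta}(\hat\lambda_{\hat S,\xi,\delta})\|_2^2$, which is at most $\tfrac{\xi\epsilon}{2}$ precisely when $\|\hat x_{\hat S,\xi,\delta} - x_{\hat S,\xi,\delta}^*\|_2 \le \sqrt{\tfrac{\delta\epsilon}{\hat m}}\min(1,\tfrac{4\mu\delta}{\hat m})$. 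Chaining the two halves produces $G(\lambda^*) - G(\hat\lambda_{\hat S,\xi,\delta}) \le \xi\epsilon$. For the second assertion I would run the same sandwich through $F_{\hat S,\xi,0}(\hat x_{\hat S,\xi,\delta})$, using $F(x^*) = F_{\hat S,\xi,0}(x^*) = G(\lambda^*)$ (from \cref{lemma:0 Point Subset}) together with $F_{\hat S,\xi,0} \le F_{\hat S,\xi,\delta}$, to conclude $G(\lambda^*) - G(\hat\lambda_{\hat S,\xi,\delta}) \le F_{\hat S,\xi,0}(\hat x_{\hat S,\xi,\delta}) - G(\hat\lambda_{\hat S,\xi,\delta}) \le 2\xi\epsilon$.
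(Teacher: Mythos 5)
Your proposal is correct and takes essentially the same route the paper intends: the paper gives no separate proof here, stating only that the result follows ``similar to'' \cref{prop:Saddle Pair Linear}, and your argument is exactly that adaptation with $m$, $A$ replaced by $\hat m$, $A_{\hat S}$. You also make explicit the one genuinely new ingredient the paper leaves implicit --- that by \cref{lemma:0 Point Subset} and complementary slackness ($\lambda_i^\ast = 0$ off $S^A$) the subset problem retains $x^\ast$ and the reference dual value $G(\lambda^\ast)$, and that padding $\hat\lambda_{\hat S,\xi,\delta}$ with zeros makes $G$ agree with the subset dual function --- which is the right thing to check.
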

\ignore{
\begin{proposition}\label{prop:Saddle Pair Linear Subset}
Suppose that \(\xi \ge \bar{\xi}\), \(S^{A}\subseteq \hat S \subseteq S^{F}\), \(\hat m =\mathrm{card}(\hat S)\), and \(\delta \le \frac\epsilon {2{\hat m}\log2}\). If \(\hat{x}_{\hat S,\xi,\delta}\) satisfy \(\|\hat{x}_{\hat S,\xi ,\delta}-x_{\hat S,\xi,\delta}^*\|_2\le \sqrt{\frac{\delta\epsilon}{\hat m}}\min(1,\frac{4\mu\delta}{\hat m})\). Then, if we take \(\hat \lambda_{\hat S,\xi,\delta}\) to be 
\[\hat \lambda_{\hat S,\xi,\delta,i}=\left\{
\begin{aligned}
&\ \xi \frac{\exp((a_i^T\hat{x}_{\hat S,\xi,\delta}-b_i)/\delta)}{1+\exp((a_i^T\hat{x}_{\hat S,\xi,\delta}-b_i)/\delta)},\quad &i\in \hat S\\
&\ 0, \quad &i\in S^F\backslash \hat S,
\end{aligned}
\right.\]
then it satisfies \[G(\lambda^*)-G(\hat{\lambda}_{\hat S, \xi ,\delta})\leq \xi \epsilon.\] 
\end{proposition}}
Let \({\varsigma} := \min_{i\in S^F\backslash S^A} (b_i-a_i^Tx^*)\) denote the smallest slackness of inactive constraints at \(x^*\), which naturally arises as part of the complexity analysis of our screening procedure. In particular, we obtain the following result by combining \cref{prop:Nested Complexity-SVRG Catalyst}, Lemma \ref{lem:Nested Complexity-Generic subset}, and Proposition \ref{prop:Saddle Pair Linear Subset}.

\begin{proposition}\label{Thm: Main Linear New subset}
Consider applying the nested penalty method with screening procedure (\cref{algo_nested_linear_screening}), with penalty parameter \(\xi \ge \bar{\xi}\) and initial smoothness parameter $\delta_0 = \frac{\xi}{\mu}\exp(-2)$, and using proximal SVRG with catalyst acceleration \cite{lin2015universal, lin2018catalyst} to solve the penalty reformulation subproblems. Let $\{\epsilon_t\}$ be an accuracy sequence for the subproblems satisfying
\begin{equation*}
\epsilon_t = \min\left\{\frac12 \hat m_t\mu\delta_t^2\log^2\left(\frac{\xi s_{\max}}{\mu\delta_t}\right), {\hat m_t}\xi\delta_t\log\left(\frac{\xi s_{\max}}{\mu\delta_t}\right)\right\} \text{ for } t \geq 0,
\end{equation*}
and suppose that the stopping criterion based on the gradient mapping \eqref{eq:stopping_time_map} is used with step-size \(\alpha_t = 1/(L+\frac{\xi s_{\max}^2}{4\delta_t})\), i.e., $\tau_t = \tilde{T}_{\calM, F_{\xi ,\delta_t}}(\sqrt{2\mu\epsilon_t}; \tilde{x}_{t-1}, \alpha_t)$ for all $t \geq 0$.
Then, for any desired accuracy \(\epsilon\in (0,\frac{{m}\xi }{\mu}\exp(-2)]\), the expected number of incremental steps required to satisfy 
\(\|\tilde{x}_T - x^\ast\|_2 \leq \epsilon\) and \(G(\lambda^*)-G(\tilde{\lambda}_T)\le F_{S^A,\xi,0}(\tilde x_T) - G(\tilde{\lambda}_T)\le \xi\epsilon\)
is upper bounded by
\[
\tilde O\left({\ell}+\sqrt{\frac {({\ell}+{m})L} \mu}+\sqrt{\frac{m({\ell}+{m})\xi }{\mu\varsigma}}+{m^A}\sqrt{\frac{({\ell}+{m^A})\xi }{\mu\epsilon}}\right).
\]
\end{proposition}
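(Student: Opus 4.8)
The plan is to merge three ingredients already in hand: the per-stage cost accounting of \cref{prop:Nested Complexity-SVRG Catalyst}, the convergence-and-safety guarantees of \cref{lem:Nested Complexity-Generic subset}, and the dual-recovery bound of \cref{prop:Saddle Pair Linear Subset}, while tracking the shrinking constraint count $\hat m_t$. First I would fix the same geometric schedule $\delta_t = \delta_0/\eta^t$ with $\delta_0 = \frac{\xi}{\mu}\exp(-2)$ used in \cref{prop:Nested Complexity-SVRG Catalyst}, and take $T$ to be the smallest index for which the distance bound \eqref{eq:generic_convergence subset} of \cref{lem:Nested Complexity-Generic subset} drops below $\epsilon$. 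Since by the safety part of that lemma the screened set eventually collapses to $S^A$, this last bound is governed by $\hat m_T = m^A$ rather than $m$, giving $\delta_T = \tilde O(\epsilon/m^A)$ (up to logarithms) and $T = \tilde O(\log(1/\epsilon))$.

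The crux is to locate the outer iteration $t_\ast$ at which screening removes all inactive constraints. By \cref{lem:Nested Complexity-Generic subset} we have $\hat S_{t+1} \subseteq S_{\Delta_t}$ with $\Delta_t = 4\sqrt{m}\,\delta_t\log(m\xi/(\mu\delta_t))$; because every inactive constraint has slack at least $\varsigma$, the inclusion forces $\hat S_{t+1} = S^A$ (hence $\hat m_{t+1} = m^A$) as soon as $\Delta_t < \varsigma$, i.e. once $\delta_t$ crosses a threshold $\delta_\ast$ determined by $\varsigma$ and $\sqrt{m}$. This partitions the run into a screening phase $t \le t_\ast$ and a post-screening phase $t > t_\ast$. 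At each stage I would apply the per-stage estimate of \cref{prop:Nested Complexity-SVRG Catalyst} verbatim but with $\hat m_t$ in place of $m$, treating the objective together with the $\hat m_t$ retained penalties as $\ell + \hat m_t$ components of smoothness $L_t = (\ell+\hat m_t)\xi/(4\delta_t)$, so that $\bbE[\tau_t] = \tilde O(\ell + \sqrt{(\ell+\hat m_t)L/\mu} + \sqrt{\hat m_t(\ell+\hat m_t)\xi/(\mu\delta_t)})$ via \cref{lemma:stopping criteria:general proximal} and the catalyst-SVRG rate. Summing the geometric series over $t \le t_\ast$ with $\hat m_t \le m$ is dominated by its last term at $\delta_{t_\ast} = \delta_\ast$ and yields the $\varsigma$-dependent third term; summing over $t > t_\ast$ with $\hat m_t = m^A$ down to $\delta_T = \tilde O(\epsilon/m^A)$ is dominated by its last term and yields the fourth term $\tilde O(m^A\sqrt{(\ell+m^A)\xi/(\mu\epsilon)})$, while the $T(\ell + \sqrt{(\ell+m)L/\mu})$ contribution supplies the first two terms up to logarithms. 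For the dual guarantee I would invoke \cref{prop:Saddle Pair Linear Subset} at $t = T$: safety gives $\hat S_T = S^A$, and the second branch of the prescribed $\epsilon_t$ together with $\delta_T \le \epsilon/(2m^A\log 2)$ certifies $F_{S^A,\xi,0}(\tilde x_T) - G(\tilde\lambda_T) \le \xi\epsilon$.

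I expect the main obstacle to be the transition bookkeeping. One must pin down $\delta_\ast$ from the screening threshold $\Delta_t < \varsigma$, verify inductively -- as in the proof of \cref{lem:Nested Complexity-Generic subset} -- that no active constraint is ever screened so that invoking the bound with $\hat m_T = m^A$ is legitimate, and confirm that both geometric sums are controlled by their final terms so that they telescope to exactly the $\varsigma$- and $\epsilon$-dependent terms claimed. A secondary subtlety is that both the per-stage accuracy target $\epsilon_t$ and the distance bound of \cref{lem:Nested Complexity-Generic subset} depend on the running count $\hat m_t$, so consistency across the transition must be checked; the remaining manipulations (the factor $\eta/(\sqrt\eta-1)$ and the choice $\eta = 4$) are routine and identical to \cref{prop:Nested Complexity-SVRG Catalyst}.
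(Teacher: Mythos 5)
Your proposal is correct and follows essentially the same route as the paper: the same geometric $\delta$-schedule, the same per-stage catalyst--SVRG accounting with $\hat m_t$ retained components, the same use of the screening threshold $4\sqrt{m}\,\delta_t\log(m\xi/(\mu\delta_t)) \le \varsigma$ to mark when $\hat S_{t+1}$ collapses to $S^A$, and the same invocation of \cref{prop:Saddle Pair Linear Subset} for the dual bound. The only cosmetic difference is that you sum two geometric series each dominated by its last term, whereas the paper bounds every $\bbE\tau_t$ by the maximum of the two regime-dependent terms and multiplies by $T = O(\log(m\xi/(\mu\epsilon)))$, which is absorbed into the $\tilde O$ either way.
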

\begin{proof}
Suppose we use the sequence of \(\delta\): $\delta_0>...>\delta_T$, where $\delta_0=\frac{\xi }{\mu}\exp(-2)$, $\delta_{t-1}/\delta_{t}=\eta>1$, \(t=1,...,T\), and $T$ is the smallest positive integer such that the following inequality holds
\[2{\hat m_T}\delta_T\log\left(\frac{\xi \hat m_T}{\mu\delta_T}\right)\le\epsilon.\] Then, the estimation of the complexity is similar with the proof for \cref{prop:Nested Complexity-SVRG Catalyst}. Notice 
\begin{equation}\label{eq:Estimation in Proof cor final}
\begin{aligned}
&\bbE\tau_0= \tilde O\left({\ell}+{m}+\sqrt{\frac{({m}+{\ell})L+
{m} L_0}{\mu}}\right)=\tilde O\left({\ell}+\sqrt{\frac{({\ell}+{m})L}{\mu}}+\sqrt{{({\ell}+{m}){m}}}\right).\\
&\bbE\tau_t= \tilde O\left({\ell}+{\hat m_t}+\sqrt{\frac{({\hat m_t}+{\ell})L+
{\hat m_t} L_t}{\mu}}\right)\\
&= \tilde O\left({\ell}+\sqrt{\frac{({\ell}+{\hat m_t})L}{\mu}}+\sqrt{\frac{({\ell}+{\hat m_t}){\hat m_t}\xi }{\mu\delta_t}}\right),\ t=1,...,T.
\end{aligned}.    
\end{equation}
By \cref{lem:Nested Complexity-Generic subset}, when \(4\sqrt{m}\delta_t\log\left(\frac{m\xi}{\mu\delta_t}\right) \le \varsigma \), \(\hat S_{t+1}=S^A\). Hence, \[\bbE\tau_t\le \tilde O\left({\ell}+\sqrt{\frac {({\ell}+{m})L} \mu}+\max\left(\sqrt{\frac{m({\ell}+{m})\xi }{\mu\varsigma}},{m^A}\sqrt{\frac{({\ell}+{m^A})\xi }{\mu\epsilon}}\right)\right).\]
The total complexity follows from the summation of \(T\le O(\log(\frac{m\xi}{\mu\epsilon}))\) estimations.
\end{proof}

As compared to \cref{prop:Nested Complexity-SVRG Catalyst}, \cref{Thm: Main Linear New subset} introduces an additional term, which is $\tilde O\left(\sqrt{\frac{m({\ell}+{m})\xi }{\mu\varsigma}}\right)$, to the complexity bound, and which naturally depends inversely proportional to the square root of $\varsigma$. On the other hand, the last term of the complexity bound of \cref{prop:Nested Complexity-SVRG Catalyst} is $\tilde O\left({m}\sqrt{\frac{({\ell}+{m})\xi }{\mu\epsilon}}\right)$, whereas \cref{Thm: Main Linear New subset} replaces $m$ in this term with $m^A$, the number of \emph{active} constraints at $x^\ast$, which can be much smaller. In particular, when \(t\) goes to infinity, \(\hat S_{t}\) will converge to \(S^A\). In other words, in the long run, only the tight constraints will be maintained in the penalty reformulations in \cref{algo_nested_linear_screening}. Furthermore, if \(m^A\ne 0\), the $\tilde O\left({m^A}\sqrt{\frac{({\ell}+{m^A})\xi }{\mu\epsilon}}\right)$ term in \cref{Thm: Main Linear New subset} will dominate the complexity for small enough $\epsilon$, indicating a substantial improvement due to the incorporation of the screening procedure, especially when $m^A \ll m$.

\section{Numerical Results}
\label{sec:numerical}
In this section, we present the our numerical experiments performed to test the empirical effectiveness and safety of the proposed algorithms. The first problem used for the experiments
is quadratic programming, i.e., the minimization of a convex quadratic function over linear inequality constraints; the second one is a support vector machine problem. We mainly focus on three algorithms for solving the problems: \texttt{SASC-SGD}: the SASC algorithm in \cite{fercoq2019almost}; \texttt{Nested-SGD}: \cref{algo_nested_linear} with SGD for solving the sub-problems; \texttt{Nested-SGDM}: \cref{algo_nested_linear} with the momentum method (see \eqref{Nesterov Momentum} in \cref{sec:details of numerical}) for solving the sub-problems. 
For the simplicity of implementation, we use SGD instead of SVRG for most of the experiments, and the momentum method instead of the catalyst accelerated SVRG. We will also compare their performance with the parameter updating scheme in \cite{nedic2020convergence}, and the static penalty method \texttt{Static-SGDM}. We do not to include the comparison with algorithms in \cite{mishchenko2018stochastic} because it has a similar penalty structure with \cite{fercoq2019almost}, while the latter contains more information about implementation. For the second experiment, we also include comparisons with \texttt{Nested-SVRG}: \cref{algo_nested_linear} with SVRG solving the sub-problems, and \texttt{Nested-SVRG-Screening}: \cref{algo_nested_linear_screening} with SVRG solving the sub-problems, to verify the safety and efficiency of the screening procedure in \cref{sec:screening}. Further details about experiments are given in \cref{sec:details of numerical}.
\subsection{Quadratic Programming}
\label{subsec:qp}
We consider the following QP problem 
\begin{equation}\label{eq:New QP}
\begin{aligned}
\min_{x\in\mathbb{R}^n}\quad &\frac{1}{2{\ell}}\sum_{i=1}^{\ell}(\phi_i^Tx-y_i)^2+\frac w2 \|x\|_2^2\\
\mathrm{s.t.}\quad &a_j^Tx\le b_j,\ j=1,...,{m},
\end{aligned}
\end{equation}
where \(\Phi=(\phi_1,...,\phi_{\ell})^T\in\bbR^{{\ell}\times n}\), \(y=(y_1,...,y_{\ell})^T\in\mathbb{R}^{\ell}\), \(A\in(a_1,...,a_{m})^T\in\mathbb{R}^{{m}\times n}\), and \(b=(b_1,...,b_{m})^T\in\mathbb{R}^{m}\) are generated randomly. Namely, the entries \(y\), \(b_0\) are all generated i.i.d. from the standard normal distribution. We then let \(b\) be the absolute value vector of \(b_0\) to ensure the feasible region is nonempty. The entries of \(A\) are drawn from i.i.d. Normal(0,1), and then normalized so that \(\|a_i\|_2=1\), \(i=1,...,{m}\). We let \({\ell}={m}=n=100\), and the regularization parameter \(w=0.1\). All algorithms are initialized at point \(x_0=0\).

For this experiment, we compare the relative error of the solutions found by the algorithms, i.e., \(\frac{\|x-x_{\text{cvx}}\|_2}{\|x_{\text{cvx}}\|_2}\), where \(x_{\text{cvx}}\) is the solution obtained from \texttt{CVX} \cite{cvx,gb08} calling \texttt{gurobi} \cite{gurobi}. We first select the best parameters for the three methods based on 20 simulations (see \cref{sec:details of numerical}). \cref{Fig_qp_bp_nested} shows the box plot of the relative errors for the three methods after 1e7 iterations. We could see that \texttt{Nested-SGDM} has the fastest convergence after 1e7 iterations, and then it's \texttt{Nested-SGDM}. \cref{Fig_qp_bp_static} shows a similar box plot for \texttt{Static-SGDM}, where \(\delta\) is chosen as the initial, the last, and their geometric average in \texttt{Nested-SGDM}.

\begin{figure}[ht]
\begin{minipage}[ht]{0.5\linewidth}
\centering
\includegraphics[width=1\textwidth]{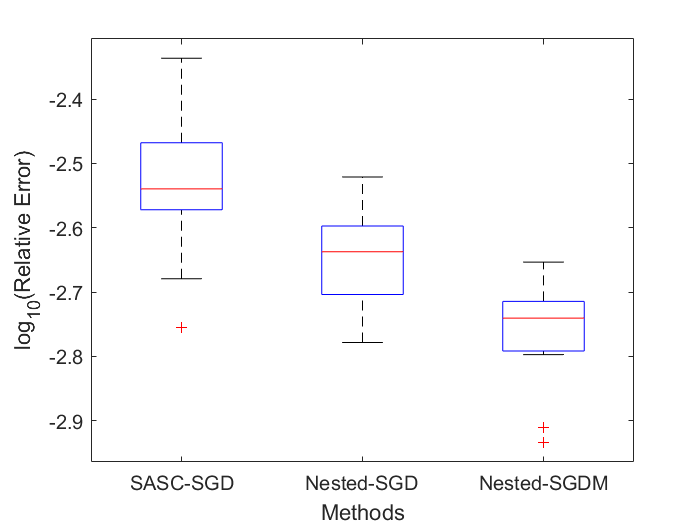}
\caption{Relative errors of 3 methods after 1e7 iterations in 20 simulations.}
\label{Fig_qp_bp_nested}
\end{minipage}%
\begin{minipage}[ht]{0.5\linewidth}
\centering
\includegraphics[width=1\textwidth]{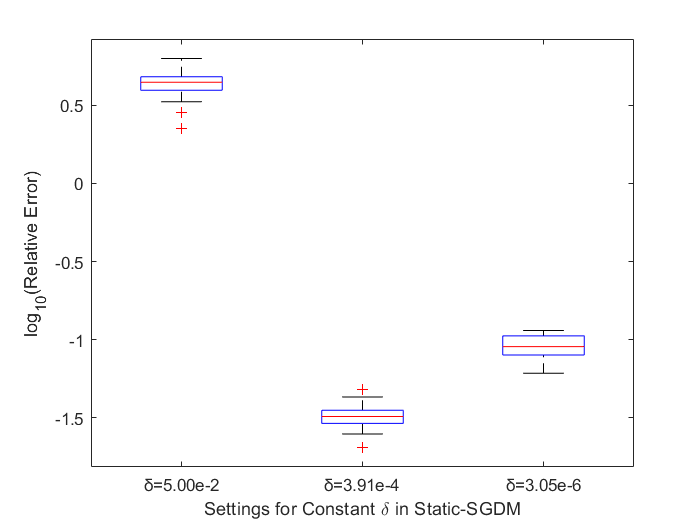}
\caption{Relative errors of \texttt{Static-SGDM} after 1e7 iterations in 20 simulations.}
\label{Fig_qp_bp_static}
\end{minipage}
\end{figure}

\cref{Fig_qp_os} shows the convergence of the relative error for the
three methods with the best parameter settings and \texttt{Static-SGDM} (using the geometric average \(\delta\)), respectively.
In \cref{Fig_qp_os} we see that using the momentum method in the nested algorithm to solve the sub-problems is more efficient compared to SGD. 
\cref{algo_nested_linear} performs slightly better with SGD compared to using SASC to solve the sub-problems. For this run, even though \(\|\lambda_{\text{cvx}}\|_{\infty}=0.027\), we use \(\xi =1\), 
which indicates that, in practice, \(\xi \) does not need to be a close upper bound of \(\|\lambda^*\|_{\infty}\). 
It is also shown in \cref{sec:details of numerical} that the \texttt{Nested-SGD} and \texttt{Nested-SGDM} are more robust than \texttt{SASC-SGD} when the parameters are tuned. Note that \texttt{Static-SGDM} does not converge because the accuracy of the penalty reformulation is directly proportional to \(\delta\). On the other hand, if we set \(\delta\) in \texttt{Static-SGDM} to be the last \(\delta\) in \texttt{Nested-SGDM}, as shown in \cref{Fig_qp_bp_static}, its performance gets worse due to the large condition number, which is inversely proportional to \(\delta\). These results confirm the advantage of the nested algorithm.

The availability of duality gap is yet another advantage of our approach.
\cref{Fig_qp_dg} shows the convergence of the minimal duality gap for the three algorithms.
During iterations, whereas the primal solutions are not necessarily feasible, the dual solutions obtained by \eqref{eq:Dual Variable} are feasible. Hence, we use \(F_{\xi ,0}(x_t)-G(\lambda_t)\) as the duality gap 
(see \cref{sec:duality}) 
and plot \(\min_{1\le s \le t} [F_{\xi ,0}(x_s)-G(\lambda_s)]\) with respect to \(t\) in \cref{Fig_qp_dg}. In this figure it is clear that the proposed nested algorithms perform better than the \texttt{Static-SGDM}.

\begin{figure}[ht]
\begin{minipage}[ht]{0.5\linewidth}
\centering
\includegraphics[width=1\textwidth]{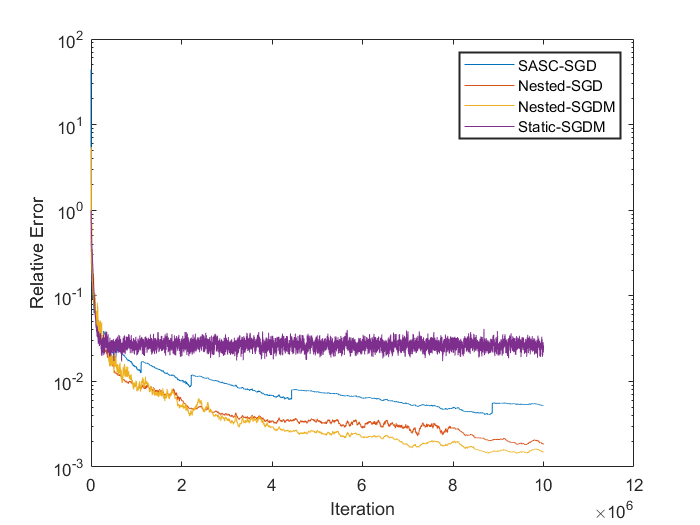}
\caption{Convergence of relative error.}
\label{Fig_qp_os}
\end{minipage}%
\begin{minipage}[ht]{0.5\linewidth}
\centering
\includegraphics[width=1\textwidth]{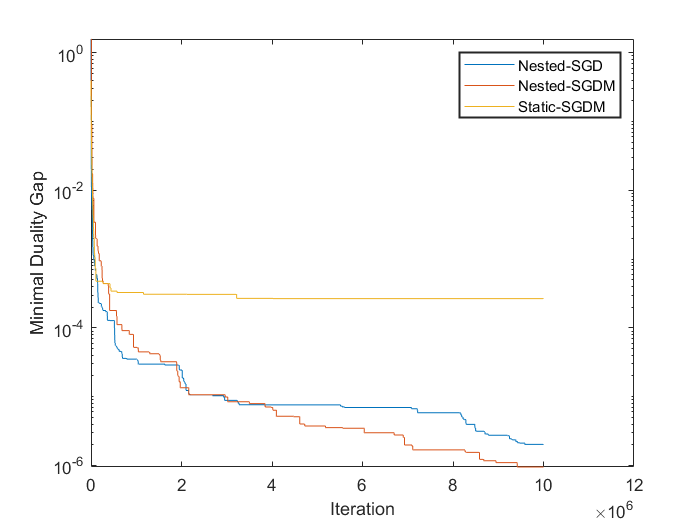}
\caption{Convergence of duality gap.}
\label{Fig_qp_dg}
\end{minipage}
\end{figure}

\subsection{Support Vector Machine}
\label{subsec:svm}
In this subsection, we consider the following hard margin support vector machine (SVM) problem for classification:
\begin{equation}\label{eq:SVM}
\begin{aligned}
\min_{x\in\mathbb{R}^n}\quad&\frac12\|x\|_2^2\\
\mathrm{s.t.}\quad &b_i\left<a_i,x\right>\ge1,\ i=1,...,m,
\end{aligned}
\end{equation}
where \(\{a_1,...,a_m\}\subset\bbR^n\) and \(\{b_1,...,b_m\}\subset\{-1,1\}^n\) are the features and labels of the observations. For the experiment, we use the \texttt{mushrooms} dataset of libsvm database \cite{CC01a}, with 8,124 observations and 112 features (the features are normalized before training). Features in \texttt{mushrooms} are separable, and we use \texttt{CVX} calling \texttt{gurobi} to produce an approximate solution to \eqref{eq:SVM}, \(x_{\text{cvx}}\).

\cref{Fig_svm_rler} shows the convergence of relative errors of the solutions, i.e., \(\frac{\|x-x_{\text{cvx}}\|_2}{\|x_{\text{cvx}}\|_2}\), for the four algorithms considered in \cref{subsec:qp}, whereas \cref{Fig_svm_dg} shows the convergence of the duality gaps. In these figures we observe that, consistent with the results in \cref{subsec:qp}, \texttt{Nested-SGDM} converges the fastest, followed next by \texttt{Nested-SGD}. 

\begin{figure}[ht]
\begin{minipage}[ht]{0.5\linewidth}
\centering
\includegraphics[width=1\textwidth]{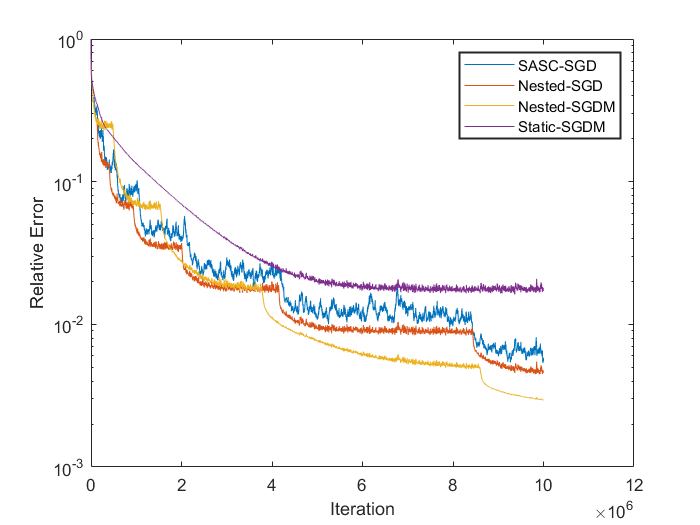}
\caption{Convergence of relative errors.}
\label{Fig_svm_rler}
\end{minipage}%
\begin{minipage}[ht]{0.5\linewidth}
\centering
\includegraphics[width=1\textwidth]{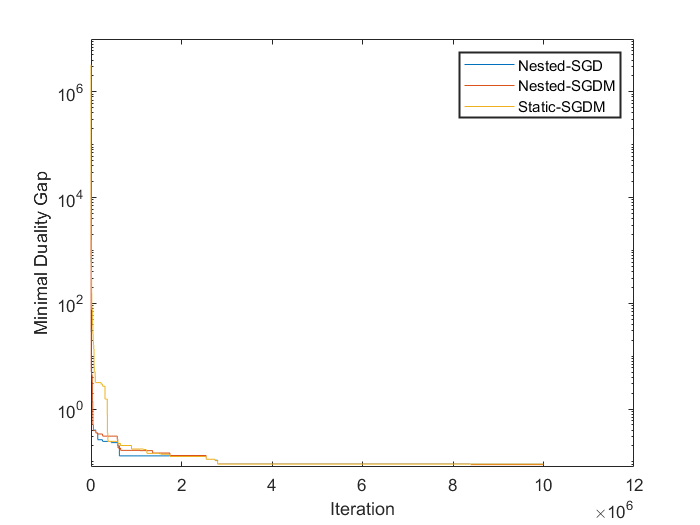}
\caption{Convergence of duality gaps.}
\label{Fig_svm_dg}
\end{minipage}
\end{figure}

To test the safety and effectiveness of the screening procedure, in \cref{Fig_svm_rler_screening}, we compare algorithms \texttt{SASC-SGD}, \texttt{Nested-SGD}, \texttt{Nested-SVRG}, \texttt{Nested-SVRG-Screening}. 
In this figure, we observe that even though without screening \texttt{Nested-SVRG} performs worse than \texttt{Nested-SGD}, with screening it outperforms \texttt{Nested-SGD}. The reason for the performance improvement is that we dynamically drop a significant amount of redundant constraints in \cref{algo_nested_linear_screening}, which saves iterations that query these constraints. In other words, the screening procedure in \cref{algo_nested_linear_screening} is effective in detecting and eliminating redundant constraints. Details on the constraints that are dropped and kept in \texttt{Nested-SVRG-Screening} are presented in \cref{sec:details of numerical}. 

\begin{figure}[ht]
\centering
\includegraphics[width=0.5\textwidth]{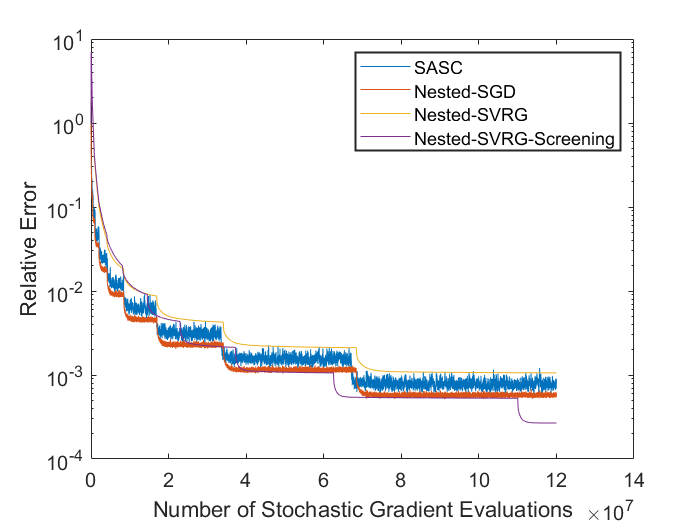}
\caption{Relative errors during iterations}
\label{Fig_svm_rler_screening}
\end{figure}

\section{Conclusions}
\label{sec:conclusions}
We give a nested accelerated stochastic method for penalized reformulations of strongly convex function minimization subject to linear constraints. 
The algorithm generates a primal solution within a distance of $\epsilon$ to the optimum and an \(O(\epsilon)\) sub-optimal dual solution in an expected $\tilde O(1/\sqrt{\epsilon})$ stochastic first-order iterations. We also design the screening procedure for our algorithm, which can effectively drop inactive constraints. Computational results on synthetic quadratic programming instances and support vector machine instances demonstrate the effectiveness and robustness of the nested algorithm.

\appendix

\section{Lemmas and Missing Proofs}
\label{sec:proofs}
\subsection{Proof of \cref{prop:softplus}}
\begin{proof}
Observe that
\[\begin{aligned}
\lim_{\delta\to0^+} \delta \log(1+\exp(t/\delta))&=t,\quad t\in(0,\infty),\\
\lim_{\delta\to0^+} \delta \log(1+\exp(t/\delta))&=0,\quad t\in(-\infty,0].
\end{aligned}\]
Hence, we may define \(p_0(t) := \max(0,t)\). Then, \(p_{\delta}(t)\) is continuous with respect to \(\delta\), for \(\delta\in[0,\infty)\).
For \(\delta>0\), we have
\[\begin{aligned}
p_{\delta}'(t)&=\frac{\exp(t/\delta)}{1+\exp(t/\delta)}\in[0,1],\\
p_{\delta}''(t)&=\frac{1}{\delta}\frac{\exp(t/\delta)}{(1+\exp(t/\delta))^2}\in[0,\frac1{4\delta}].\\
\end{aligned}
\]
For \(0<\delta_1\le\delta_2\), \(p_{\delta_1}(t)\le p_{\delta_2}(t)\)
\[p_{\delta_2}'(t)-p_{\delta_1}'(t)\left\{\begin{aligned}
&\ge 0,\quad &t<0,\\
&= 0,\quad &t=0, \\
&\le 0,\quad &t>0.\\
\end{aligned}\right.\]
Then, \(0\le p_{\delta_2}(t)- p_{\delta_1}(t)\le p_{\delta_2}(0)- p_{\delta_1}(0) = (\delta_2-\delta_1)\log2\). Because \(p_{\delta}(t)\) is continuous with respect to \(\delta\) for \(\delta\in[0,\infty)\), letting \(\delta_1  = 0\) the result holds. 
\end{proof}

\subsection{Proof of \cref{lemma:1d 1+exp}}
\begin{proof}
Let \(\tilde \theta_{c,d}=\arg\max_{\theta}\phi_{c,d}(\theta)\). Since 
\[\begin{aligned}
\frac{d \phi_{c,d}(\theta)}{d \theta}=\frac1{(d+c\exp(\theta))^2}((d+c\exp(\theta))-c\theta\exp(\theta)),
\end{aligned}\]
    \(\tilde \theta_{c,d}\) satisfies
    \[\tilde \theta_{c,d}=1+\frac{d}{c}\exp(-\tilde \theta_{c,d}).\]
    Moreover, \(\frac{d \phi_{c,d}(\theta)}{d \theta}>0\) for \(\theta<\tilde \theta_{c,d}\) and \(\frac{d \phi_{c,d}(\theta,\delta)}{d \theta}<0\) for \(\theta>\tilde \theta_{c,d}\).
    When \(\log(\frac{d}{c})\ge 2\),
    \[\frac{d \phi_{c,d}(\theta)}{d \theta}|_{\theta=\log(\frac{d}{c})}\le0.\]
    Then, 
    \[\phi_{c,d}(\theta)\le\phi_{c,d}(\tilde \theta_{c,d})=(\tilde \theta_{c,d}-1)/d\le(\log(\frac{d}{c})-1)/d.\]
\end{proof}

\ignore{
\subsection{Proof of \cref{lemma:stopping criteria:general proximal}}
\begin{proof}

\noindent
1. For given \({\bar x}\in \mathbb{R}^n\) and \(\alpha\in(0,\frac1L]\), let \[h_0(x)={\tilde F}({\bar x})+\nabla {\tilde F}({\bar x})^T(x-{\bar x})+\frac{1}{2\alpha}\|x-{\bar x}\|_2^2,\] and \(h(x)=h_0(x)+\psi(x)\). By the definition, \(\tilde x=\arg\min_{y\in\mathbb{R}^n} h(x)\). Suppose \(\tilde F\) is \(\mu_{\tilde F}\)-strongly convex, and \(\psi\) is \(\mu_{\psi}\)-strongly convex, where \(\mu_{\tilde F}\) and \(\mu_{\psi}\) are nonnegative constants with \(\mu_{\tilde F}+\mu_{\psi}=\mu\). Then,
\begin{equation}\label{eq:stopping criteria:general proximal eq1}
\begin{aligned}
\psi(x)\ge &\ \psi(\tilde x) - (h_0(x)-h_0(\tilde x)) + (\frac{\mu_{\psi}}{2} +\frac1{2\alpha})\|x-\tilde x\|_2^2 \\ 
= &\ \psi(\tilde x) - \nabla h_0(\tilde x)^T(x-\tilde x)+ \frac{\mu_{\psi}}{2} \|x-\tilde x\|_2^2\\
= &\ \psi(\tilde x)-(\nabla {\tilde F}(\bar x)-{g_{\psi}(\bar x;\tilde F,\alpha)})^T (x-\tilde x)+ \frac{\mu_{\psi}}{2} \|x-\tilde x\|_2^2.
\end{aligned}    
\end{equation}
Then, 
\[\begin{aligned}
&\ F(x)-\frac{\mu}{2}\|x-\bar x\|_2^2\ge \psi(x)+{\tilde F}(\bar x)+\nabla {\tilde F}(\bar x)^T(x-\bar x)-\frac{\mu_{\psi}}{2} \|x-\tilde x\|_2^2\\
=&\ \psi(x)+{\tilde F}(\bar x)+\nabla {\tilde F}(\bar x)^T(x-\tilde x)+\nabla {\tilde F}(\bar x)^T(\tilde x-\bar x)-\frac{\mu_{\psi}}{2} \|x-\tilde x\|_2^2\\
\ge&\ \psi(\tilde x)+{\tilde F}(\bar x)+{g_{\psi}(\bar x;\tilde F,\alpha)}^T(x-\tilde x)+\nabla {\tilde F}(\bar x)^T(\tilde x-\bar x)\\
=&\ \psi(\tilde x)+h_0(\tilde x)-\frac1{2\alpha}\|\tilde x-\bar x\|_2^2+{g_{\psi}(\bar x;\tilde F,\alpha)}^T(x-\tilde x)\\
=&\ h(\tilde x) + \frac1{2\alpha}\|\tilde x-\bar x\|_2^2+{{g_{\psi}(\bar x;\tilde F,\alpha)}^T(x-\bar x)},\\
\end{aligned}\]
\paul{Should the last minus in the last line be a plus?}
where the first inequality holds by  \(\mu_{\tilde F}\)-strong convexity of \({\tilde F}\), and the second inequality holds by \eqref{eq:stopping criteria:general proximal eq1}. When \(\alpha\le \frac1L\), because \({\tilde F}\) is \(L\)-smooth, \(h_0(\tilde x)\ge {\tilde F}(\tilde x)\) and \(h(\tilde x)\ge {F}(\tilde x)\). Then, 
\begin{equation}\label{eq:stopping criteria:general proximal eq2}
    {F}(x)\ge {F}(\tilde x)+{{g_{\psi}(\bar x;\tilde F,\alpha)}^T(x-\bar x)}+\frac{{\alpha}}2\|{g_{\psi}(\bar x;\tilde F,\alpha)}\|_2^2+\frac{\mu}{2}\|x-\bar x\|_2^2.
\end{equation}

\noindent
2. Letting \(x=\bar x\) in \eqref{eq:stopping criteria:general proximal eq2}, we obtain
\[\|{g_{\psi}(\bar x;\tilde F,\alpha)}\|_2^2\le \frac{2}{{\alpha}}({F}(\bar x)-{F}(\tilde x))\le \frac{2}{{\alpha}}({F}(\bar x)-{F}^*).\]

\noindent
3. Minimizing the right-hand side of \eqref{eq:stopping criteria:general proximal eq2}, we find that \(\forall y\in\mathbb{R}^n\),
\[\begin{aligned}
{F}(y)\ge &\ \min_x\left[{F}(\tilde x)+{{g_{\psi}(\bar x;\tilde F,\alpha)}^T(x-\bar x)}+\frac{{\alpha}}2\|{g_{\psi}(\bar x;\tilde F,\alpha)}\|_2^2+\frac{\mu}{2}\|x-\bar x\|_2^2\right]\\
\ge &\ {F}(\tilde x)-(\frac{1}{2\mu}-\frac{{\alpha}}{2})\|{g_{\psi}(\bar x;\tilde F,\alpha)}\|_2^2.
\end{aligned}\]
Hence, \(\|{g_{\psi}(\bar x;\tilde F,\alpha)}\|_2^2 \ge 2\mu ({F}(\tilde x)-{F}^*)\).
\end{proof}
}

\subsection{Proof of \cref{lemma:stopping criteria:general proximal}}
\begin{proof}
\noindent
1. By the definition, \(\tilde x = \arg\min_{y\in\mathbb{R}^n} \left[\psi(y) + \nabla {\tilde F}({\bar x})^T(y-{\bar x})+\frac{1}{2\alpha}\|y-{\bar x}\|_2^2\right].\) Hence, \[{g_{\psi}(\bar x;\tilde F,\alpha)}-\nabla \tilde F(\bar x)\in\partial\psi(\tilde x),\]
which means \[{g_{\psi}(\bar x;\tilde F,\alpha)}-\nabla \tilde F(\bar x)+\nabla \tilde F(\tilde x)\in\partial F(\tilde x).\]
For any given \(x\), by the \(\mu\)-strong convexity, 
\begin{equation}\label{eq:stopping criteria:general proximal eq3}
\begin{aligned}
F(x)\ge F(\tilde x) + ({g_{\psi}(\bar x;\tilde F,\alpha)}-\nabla \tilde F(\bar x)+\nabla \tilde F(\tilde x))^T(x-\tilde x)+\frac{\mu}{2}\|x-\tilde x\|_2^2.
\end{aligned}    
\end{equation}
Letting \(x=\bar x\) in \eqref{eq:stopping criteria:general proximal eq3}, we obtain
\[\begin{aligned}
F(\bar x)&\ge\ F(\tilde x) + ({g_{\psi}(\bar x;\tilde F,\alpha)}-\nabla \tilde F(\bar x)+\nabla \tilde F(\tilde x))^T(\bar x-\tilde x)+\frac{\mu}{2}\|\bar x-\tilde x\|_2^2\\
&=\ F(\tilde x) + (\frac{\mu\alpha^2}{2}+\alpha)\|{g_{\psi}(\bar x;\tilde F,\alpha)}\|_2^2-(\nabla \tilde F(\bar x)-\nabla \tilde F(\tilde x))^T(\bar x-\tilde x)\\
&\ge\ F(\tilde x) + (\alpha-L\alpha^2)\|{g_{\psi}(\bar x;\tilde F,\alpha)}\|_2^2\\
&\ge\ F(\tilde x) + \frac{\alpha}{2} \|{g_{\psi}(\bar x;\tilde F,\alpha)}\|_2^2.
\end{aligned}\]
Therefore, \(\|{g_{\psi}(\bar x;\tilde F,\alpha)}\|_2^2\le \frac{2}{{\alpha}}({F}(\bar x)-{F}(\tilde x))\le \frac{2}{{\alpha}}({F}(\bar x)-{F}^*).\)

\noindent
2. Minimizing the right-hand side of \eqref{eq:stopping criteria:general proximal eq3}, we find that \(\forall y\in\mathbb{R}^n\),
\[\begin{aligned}
{F}(y)\ge &\ \min_x\left[ ({g_{\psi}(\bar x;\tilde F,\alpha)}-\nabla \tilde F(\bar x)+\nabla \tilde F(\tilde x))^T(x-\tilde x)+\frac{\mu}{2}\|x-\tilde x\|_2^2\right]\\
\ge &\ {F}(\tilde x)-\frac{1}{2\mu}\|{g_{\psi}(\bar x;\tilde F,\alpha)}-\nabla \tilde F(\bar x)+\nabla \tilde F(\tilde x)\|_2^2\\
= &\ {F}(\tilde x)-\frac{1}{2\mu}\|{g_{\psi}(\bar x;\tilde F,\alpha)}\|_2^2 + \frac1\mu {g_{\psi}(\bar x;\tilde F,\alpha)}^T(\nabla \tilde F(\bar x)-\nabla \tilde F(\tilde x))\\&\ -\frac1{2\mu}\|\nabla \tilde F(\bar x)+\nabla \tilde F(\tilde x)\|_2^2\\
\ge &\ {F}(\tilde x)-\frac{1}{2\mu}\|{g_{\psi}(\bar x;\tilde F,\alpha)}\|_2^2 + \left(\frac{1}{\mu\alpha L}-\frac1{2\mu}\right)\|\nabla \tilde F(\bar x)+\nabla \tilde F(\tilde x)\|_2^2,\\ 
\end{aligned}\]
where the last inequality follows from the co-coercivity of \(\nabla \tilde F\). Hence, \(\|{g_{\psi}(\bar x;\tilde F,\alpha)}\|_2^2 \ge 2\mu ({F}(\tilde x)-{F}^*)\).
\end{proof}

\subsection{Complexity of Proximal SVRG with Catalyst Acceleration}
\ignore{
To optimize an unconstrained strongly convex problem, for the accelerated proximal gradient method, particularly, the scheme \(\mathcal{A}(x_0,L_0,\mu_{\psi})\) in \cite{nesterov2013gradient}), the following lemma holds.

\begin{lemma}\label{lemma:Accelerated Full Gradient Lemma}
Consider applying the accelerated proximal gradient method in \cite{nesterov2013gradient} on the problem \(\min_x\Phi(x)=\phi(x)+\psi(x)\), where \(\Phi(x)\) is \(\mu\)-strongly convex, \(\phi(x)\) is convex and $L$-smooth, and $\psi(x)$ is a convex and proximable function. Then, the number of iterations till \(\Phi(x_t)-\Phi^*\le \epsilon\) is bounded by
\[O\left(\sqrt{\frac{L+\mu}{\mu}}\log((L+\mu)\frac{\|x_0-x^*\|}\epsilon)\right).\]
\end{lemma}
}
In this section, we review the main complexity result of the proximal SVRG method with catalyst acceleration \cite{lin2015universal} that is used in the analysis of \cref{algo_nested_linear}, particularly in the proof of \cref{prop:Nested Complexity-SVRG Catalyst}. Consider the problem 
\begin{equation}\label{eq:poi_svrg}
\min_{x \in \bbR^n}\quad \Phi(x):=\frac1m\sum_{i=1}^m \phi_i(x)+\psi(x),
\end{equation}
where \(\phi_i : \bbR^n \to \bbR\) are convex $L_i$-smooth functions, and $\psi : \bbR^n \to \bbR$ is a convex proximal function. Namely, it is assumed that the proximal operator of $\psi$, defined by
\[\mathrm{prox}_{t\psi}(x) := \arg\min_{u}\psi(u)+\frac1{2t}\|x-u\|_2^2,\]
is computable for any $x \in \bbR^n$ and $t > 0$. 
Lin et al. \cite{lin2015universal} propose the catalyst acceleration technique and develop the following complexity result, in the strongly convex case, for the proximal SVRG method with catalyst acceleration. Note that this complexity result is stated in terms of the number of evaluations of the the gradients of the individual component functions $\phi_i$ and of the proximal operator $\mathrm{prox}_{t\psi}$. In the result below, the \(\tilde O\) notation hides universal constants and logarithmic dependencies in \(\mu\), \(L\) and \(m\).

\begin{lemma}[Lemma C.1 of \cite{lin2015universal}]\label{lemma:Catalyst SVRG Lemma}
Consider applying the proximal SVRG method with catalyst acceleration to problem \eqref{eq:poi_svrg}. Suppose that \(\Phi\) is \(\mu\)-strongly convex for some $\mu > 0$, and let the average Lipschitz constant of the gradients $\nabla \phi_i$ be defined by $L := \frac{1}{m}\sum_{i = 1}^m L_i$. 
Then, the expectation of the number of evaluations of gradients $\nabla \phi_i$ and proximal mappings required to satisfy
\(\Phi(x_t)-\Phi^*\le \epsilon\) is upeer bound bounded by
\[\tilde O\left(\left(m+\sqrt{\frac{mL}{\mu}}\right)\log\left(\frac{\Phi(x_0)-\Phi^*}{\epsilon}\right)\right),\]
where \(\tilde O\) hides universal constants and logarithmic dependencies in \(\mu\), \(L\) and \(m\).
\end{lemma}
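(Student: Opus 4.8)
The plan is to treat this as an instance of the catalyst acceleration framework of Lin et al.\ \cite{lin2015universal}, so I would not re-derive proximal SVRG from scratch but instead combine its known linear convergence guarantee with the inexact accelerated proximal-point outer loop. First I would fix a regularization parameter $\kappa > 0$ and introduce the sequence of auxiliary subproblems
\[
\min_{x \in \bbR^n} \ \Phi(x) + \frac{\kappa}{2}\|x - y_k\|_2^2,
\]
where $y_k$ is the Nesterov-type extrapolated point maintained by the outer loop. Each such subproblem is $(\mu+\kappa)$-strongly convex, and its smooth part has average gradient-Lipschitz constant $L + \kappa$, so proximal SVRG solves it with a linear rate governed by the condition number $(L+\kappa)/(\mu+\kappa)$.

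The next step is the outer-loop analysis. I would invoke the convergence of the inexact accelerated proximal-point method, which shows that if subproblem $k$ is solved to accuracy $\epsilon_k$ with $\epsilon_k$ decaying geometrically at rate $1-\sqrt{q}$, where $q = \mu/(\mu+\kappa)$, then the outer iterates satisfy $\Phi(x_k) - \Phi^\ast \le C\,(1-\sqrt{q})^k\,(\Phi(x_0)-\Phi^\ast)$ for an absolute constant $C$. This yields an outer iteration count of $\tilde O\big(\sqrt{(\mu+\kappa)/\mu}\,\log(1/\epsilon)\big)$ to reach target accuracy $\epsilon$.

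Then I would bound the inner cost. By the linear convergence of proximal SVRG, warm-started at the previous outer iterate, each subproblem is solved in $\tilde O\big(m + \sqrt{m(L+\kappa)/(\mu+\kappa)}\big)$ gradient and proximal evaluations; the warm start ensures the initial suboptimality of each subproblem is comparable to its target accuracy $\epsilon_k$, so the per-subproblem logarithmic factor is an absolute constant that is absorbed into $\tilde O$. Multiplying the per-subproblem cost by the outer iteration count gives
\[
\tilde O\!\left(\Big(m + \sqrt{\tfrac{m(L+\kappa)}{\mu+\kappa}}\Big)\sqrt{\tfrac{\mu+\kappa}{\mu}}\,\log\tfrac1\epsilon\right).
\]
Finally I would optimize over $\kappa$: choosing $\kappa$ of order $\max\{L/m - \mu,\, 0\}$ balances the two regimes (the $m$-dominated regime against the condition-number-dominated regime) and collapses the bound to $\tilde O\big((m + \sqrt{mL/\mu})\log(1/\epsilon)\big)$, as claimed.

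The main obstacle is the rigorous bookkeeping of the inexactness: one must verify that solving each proximal subproblem only to the prescribed accuracy $\epsilon_k$, rather than exactly, still preserves the accelerated outer rate, and that the geometrically shrinking $\epsilon_k$ together with warm starts keeps every inner solve at cost $\tilde O\big(m + \sqrt{m(L+\kappa)/(\mu+\kappa)}\big)$ with only a constant (not growing) number of SVRG epochs. Since this is precisely the content of Lemma~C.1 in \cite{lin2015universal}, I would ultimately invoke their result rather than reproduce the full inexact proximal-point analysis.
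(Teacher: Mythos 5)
The paper does not prove this lemma at all: it is imported verbatim as Lemma~C.1 of \cite{lin2015universal}, so the ``official proof'' here is simply the citation, and your plan to ultimately invoke that result rather than reproduce the inexact proximal-point analysis is exactly what the paper does. Your sketch of the underlying catalyst argument (regularized subproblems, inexact accelerated outer loop with geometrically decaying tolerances, warm starts, optimization over $\kappa$) is a faithful outline of the proof in the cited reference. The one inaccuracy is the per-subproblem cost you attribute to plain proximal SVRG: it is $\tilde O\bigl(m + (L+\kappa)/(\mu+\kappa)\bigr)$, not $\tilde O\bigl(m + \sqrt{m(L+\kappa)/(\mu+\kappa)}\bigr)$; the square-root form only emerges after multiplying by the outer iteration count and taking $\kappa \approx L/m$, although since that choice makes $(L+\kappa)/(\mu+\kappa) = \Theta(m)$ the two expressions coincide there and your final bound is unaffected.
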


\section{Details of Experiments}
\label{sec:details of numerical}
\subsection{Momentum Method}
For \texttt{Nested-SGDM}, we use the momentum method in \cite{goodfellow2016deep}. The algorithm to solve the problem \(\min_{\zeta}\frac1m\sum_{i=1}^m\phi_i({}{x})\), 
is stated as
\begin{equation}\label{Nesterov Momentum}
\begin{aligned}
{}{v} &\leftarrow \alpha {}{v}-t \nabla_{x}\phi_i({}{x}+\alpha{}{v}), \\
{}{x} &\leftarrow {}{x}+{}{v},
\end{aligned}
\end{equation}
where \(t\) is the step size, and \(\alpha\) is the ratio related to the momentum, which is often set to be \((0.5,0.8,0.9,0.95)\). We use \eqref{Nesterov Momentum} as a simpler representative for the catalyst accelerated SVRG method.

\subsection{Details for \cref{subsec:qp}}
\label{subsec:details qp}
For \texttt{SASC-SGD} and \texttt{Nested-SGD}, the parameters include: \(\eta\) (or \(\omega\)), the ratios between 2 consecutive inner loops (see Algorithm 1 in \cite{fercoq2019almost} and \cref{algo_nested_linear}), from \{2,4,8\};  multipliers for the theoretical inner-loop iteration limit used in the code, from \{1,0.6,0.36\}. The theoretical inner-loop iteration limit for \texttt{Nested-SGD} is \(\log(2\eta-1)(\frac{L}{\mu}+\frac{{m}\xi }{4\mu\delta})\), and that for \texttt{Nested-SGDM} is \((2\log(2\eta-1)+\log(\frac{L}{\mu}+\frac{{m}\xi }{4\mu\delta}))\sqrt{{m}(\frac{L}{\mu}+\frac{{m}\xi }{4\mu\delta})}\). Similarly, the step sizes for the algorithms are \(\frac{1}{L+\mu+\frac{{m}\xi }{4\delta}}\). For SASC, see the settings in Algorithm 1 in \cite{fercoq2019almost}. For \texttt{Nested-SGDM}, parameter \(\alpha\) in \eqref{Nesterov Momentum} is taken as \{0.5,0.8,0.9\}. The following figures show the relative errors of the three methods with different parameters after 1e7 iterations (starting from 0 point) for \eqref{eq:New QP}, in 20 simulations (including the generation of the problems and the stochastic sequence of indexes). For \texttt{Nested-SGD} and \texttt{Nested-SGDM}, \(\xi =1\) and \(\delta_0=0.05\). We pick (2,0.36), (4,0.6), and (2,1,0.9) as the best parameter settings for \texttt{SASC-SGD}, \texttt{Nested-SGD}, and \texttt{Nested-SGDM}, considering the medium and largest errors (see \cref{fig:app:1:side:a}--\cref{fig:app:3:side:a}). We also see here that \texttt{SASC-SGD} is more likely to diverge for different parameter settings. The setting for the initial step size for \texttt{SASC-SGD} is \(\frac{1}{10L}\), instead of \(\frac3{4L}\) in Algorithm 1 in \cite{fercoq2019almost}, which results in divergence. For \texttt{Static-SGDM}, \(\xi =1\) and the choices of \(\delta\) have been shown. The duality gaps in \cref{Fig_qp_dg} are calculated every 1000 steps.

\begin{figure}[ht]
\begin{minipage}[ht]{0.5\linewidth}
\centering
\includegraphics[width=1\textwidth]{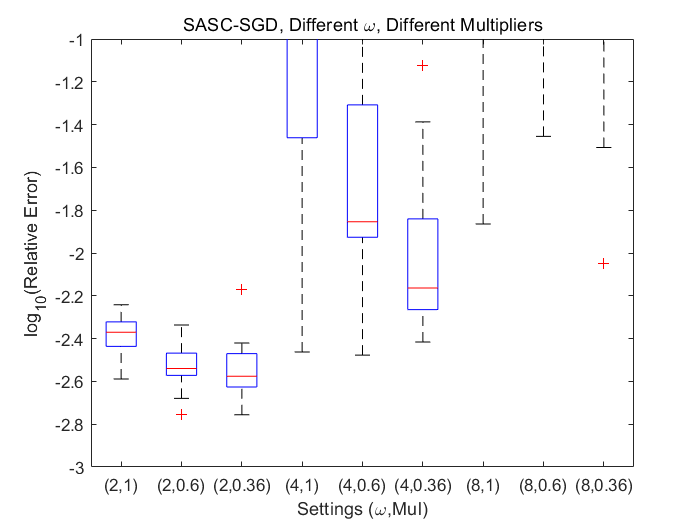}
\caption{Relative errors of \texttt{SASC-SGD}}
\label{fig:app:1:side:a}
\end{minipage}%
\begin{minipage}[ht]{0.5\linewidth}
\centering
\includegraphics[width=1\textwidth]{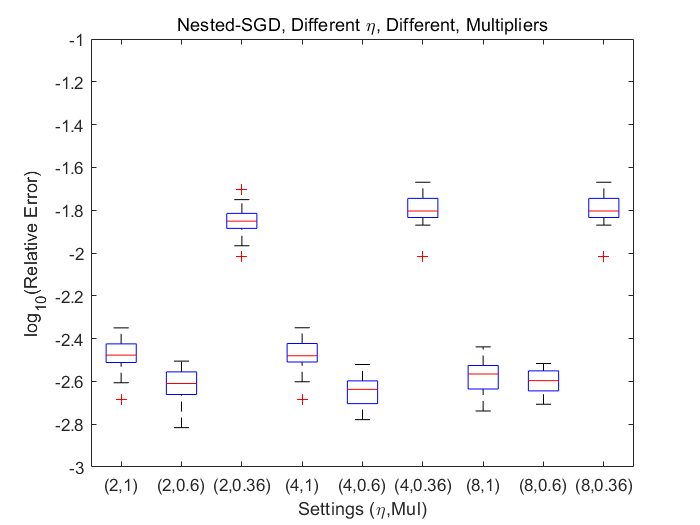}
\caption{Relative errors of \texttt{Nested-SGD}}
\label{fig:app:1:side:b}
\end{minipage}
\end{figure}

\begin{figure}[ht]
\begin{minipage}[ht]{0.5\linewidth}
\centering
\includegraphics[width=1\textwidth]{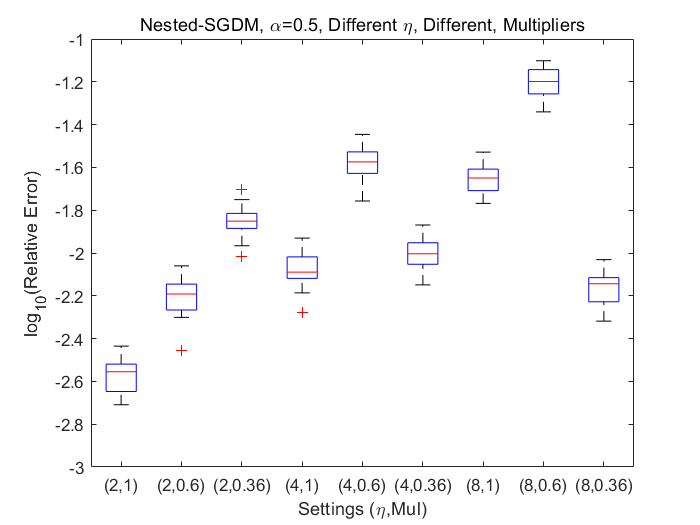}
\caption{Relative errors of \texttt{SASC,SGDM} (\(\alpha=0.5\))}
\label{fig:app:2:side:a}
\end{minipage}%
\begin{minipage}[ht]{0.5\linewidth}
\centering
\includegraphics[width=1\textwidth]{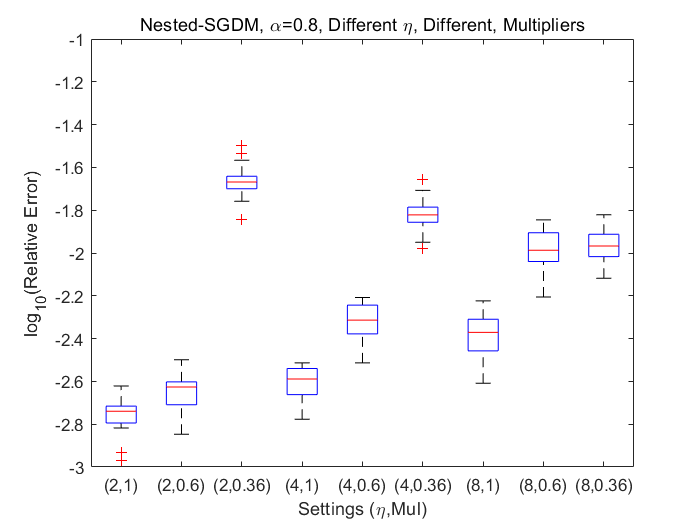}
\caption{Relative errors of \texttt{SASC,SGDM} (\(\alpha=0.8\))}
\label{fig:app:2:side:b}
\end{minipage}
\end{figure}

\ignore{
\begin{figure}[ht]
\begin{minipage}[ht]{0.5\linewidth}
\centering
\includegraphics[width=1\textwidth]{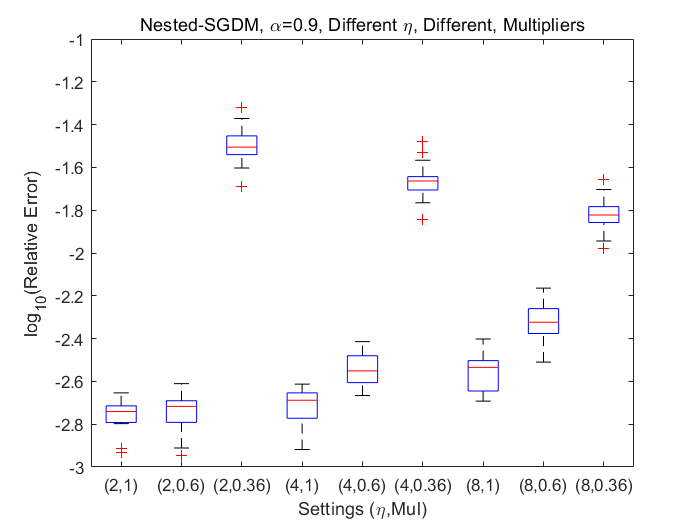}
\caption{Relative errors of \texttt{SASC,SGDM} (\(\alpha=0.9\)) with different parameters}
\label{fig:app:3:side:a}
\end{minipage}%
\begin{minipage}[ht]{0.5\linewidth}
\centering
\includegraphics[width=1\textwidth]{SVM_gap_100401.png}
\caption{Duality gaps during iterations (SVM, \cref{Fig_svm_dg} with larger y-axis scale)}
\label{fig:app:3:side:b}
\end{minipage}
\end{figure}
}

\begin{figure}[ht]
\begin{minipage}[ht]{0.5\linewidth}
\centering
\includegraphics[width=1\textwidth]{Fig_Exp1_Appendix_Nested_SGDM_3_1004.png}
\caption{Relative errors of \texttt{SASC,SGDM} (\(\alpha=0.9\))}
\label{fig:app:3:side:a}
\end{minipage}%
\begin{minipage}[ht]{0.5\linewidth}
\centering
\includegraphics[width=1\textwidth]{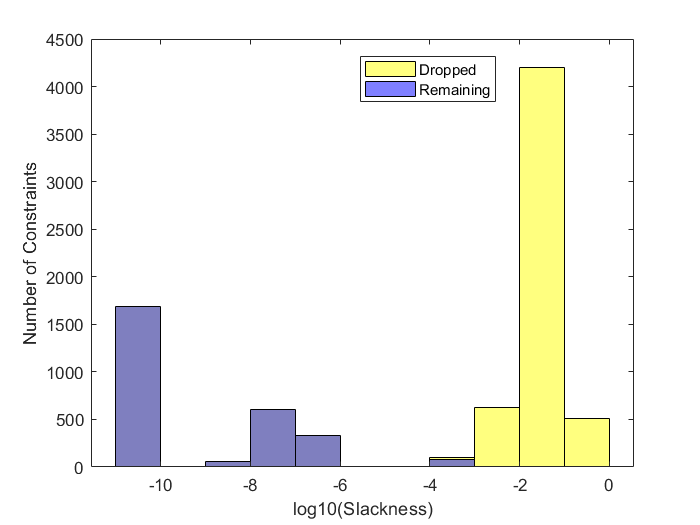}
\caption{Slackness of dropped and remaining constraints}
\label{Fig_svm_screening_tightness}
\end{minipage}
\end{figure}

\ignore{
\begin{figure}[ht]
\centering
\includegraphics[width=0.5\textwidth]{Fig_Exp1_Appendix_Nested_SGDM_3_1004.png}
\caption{Relative errors of \texttt{SASC,SGDM} (\(\alpha=0.9\)) with different parameters}
\label{fig:app:3:side:a}
\end{figure}}

\subsection{Details for \cref{subsec:svm}}
\label{subsec:details svm}
The starting point for all methods is \(x_0=0\). For \texttt{SASC-SGD}, the parameter setting is the same with 6.3 in \cite{fercoq2019almost}. For \texttt{Nested-SVRG} and \texttt{Nested-SVRG-Screening}, the full gradients are calculated every \(5m\) stochastic gradient steps, where \(m\) is the numbers of constraints or remaining constraints. The numbers of iterations for these two algorithms are approximately 5/6 of other algorithms to maintain the same number of stochastic gradient evaluations. For \texttt{Nested-SGD}, \texttt{Nested-SVRG}, \texttt{Nested-SVRG-Screening}, and \texttt{Nested-SGDM}, \(\xi =1\) and \(\delta_0=0.005\). And other parameter settings are (2,0.3), (2,0.5), (2,0.5), and (4,0.5,0.5) for \texttt{Nested-SGD}, \texttt{Nested-SVRG}, \texttt{Nested-SVRG-Screening}, and \texttt{Nested-SGDM}. Here, the theoretical inner-loop iteration numbers for \texttt{Nested-SGD}, \texttt{Nested-SVRG},\\ \texttt{Nested-SVRG-Screening} are \(\log(2\eta-1)(\frac{L}{\mu}+\frac{{m}\xi }{4\mu\delta})\), and that for \texttt{Nested-SGDM} is \((2\log(2\eta-1)+\log(\frac{L}{\mu}+\frac{{m}\xi }{4\mu\delta}))\sqrt{{m}(\frac{L}{\mu}+\frac{{m}\xi }{4\mu\delta})}\). Similarly, the step sizes for the \texttt{Nested-SGD}, \texttt{Nested-SVRG}, \texttt{Nested-SVRG-Screening} are \(\frac{4}{L+\mu+\frac{{m}\xi }{4\delta}}\), and for \texttt{Nested-SGDM} they are \(\frac{2}{L+\mu+\frac{{m}\xi }{4\delta}}\) (considering the momentum term) respectively. (We checked increasing the step sizes and (or) decreasing inner-loop iteration numbers for \texttt{SASC-SGD}, which turned out not to bring significantly better performance.) The duality gaps in \cref{Fig_svm_dg} are calculated every 1000 steps. 

In the experiment of \cref{Fig_svm_rler_screening}, for simplicity, constraint dropping criterion is set as \(a_i^T\tilde x_t-b_i < -2\sqrt{m_t}\delta_t\). As such, 2,777 out of 8,198 constraints are kept in the last inner loop of \texttt{Nested-SVRG-Screening}. In \cref{Fig_svm_screening_tightness}, the slackness of the constraints, (\(b_i-a_i^Tx_{\text{cvx}}\)), \(i=1,...,m\) are compared (to include the constraints that \(b_i-a_i^Tx_{\text{cvx}}<0\), the slackness values are set as 1e-11). Constraints with slackness greater than 1e-3 (5,536 out of 8,198) are dropped and all the constraints with slackness less than 1e-7 (2,358 out of 8,198) are kept. 

\ignore{
\begin{figure}[ht]
\centering
\includegraphics[width=0.5\textwidth]{Fig_Tightness_1204.png}
\caption{Comparison of tightness of original constraints and remaining constraints}
\label{Fig_svm_screening_tightness}
\end{figure}
}

\section*{Acknowledgments}
This research is supported, in part, by NSF AI Institute for Advances
in Optimization Award 211253, DOD ONR grant 12951270, and NSF Awards CCF-1755705 and CMMI-1762744.

\bibliographystyle{siamplain}
\bibliography{Newbib}
\end{document}